\newcommand{\real}{\mathbb{R}}
\newcommand{\complex}{\mathbb{C}}
\newcommand{\ep}{\epsilon}
\newcommand{\e}{\mathrm{e}}
\newcommand{\ffi}{\varphi}
\newcommand{\diff}{\,\mathrm{d}}
\newcommand{\dif}{\mathrm{d}}
\newcommand{\sm}{\setminus}
\renewcommand{\le}{\leqslant}
\renewcommand{\ge}{\geqslant}
\DeclareMathOperator \im{Im}
\DeclareMathOperator \re{Re}
\DeclareMathOperator \rge{rge}
\DeclareMathOperator \codim{codim}
\DeclareMathOperator \dist{dist}
\DeclareMathOperator \vect{span}
\newtheorem{theorem}{Theorem}
\newtheorem{lemma}{Lemma}
\newtheorem{proposition}{Proposition}
\newtheorem{remark}{Remark}
\newtheorem{definition}{Definition}
\newenvironment{proofof}
{\noindent{\it Proof of}}{\nolinebreak\hfill$\Box$\bigskip}
\numberwithin{equation}{section}
\begin{document}
\title[Cubic-quintic NLS solitons]{Stable NLS solitons in a cubic-quintic
medium \\
with a delta-function potential}
\author{Fran\c cois Genoud}
\address{Faculty of Mathematics\\
University of Vienna\\
Oskar-Morgenstern-Platz 1\\
1090 Vienna, Austria}
\curraddr{Delft Institute of Applied Mathematics\\
Delft University of Technology\\
Mekelweg 4 \\
2628 CD Delft, The Netherlands}
\email{S.F.Genoud@tudelft.nl}
\author{Boris A. Malomed}
\address{Department of Physical Electronics, School of Electrical Engineering%
\\
Faculty of Engineering\\
Tel Aviv University\\
Tel Aviv 69978, Israel}
\email{malomed@post.tau.ac.il}
\author{Rada M. Weish\"aupl}
\address{Faculty of Mathematics\\
University of Vienna\\
Oskar-Morgenstern-Platz 1\\
1090 Vienna, Austria}
\email{rada.weishaeupl@univie.ac.at}
\subjclass[2000]{35J60; 35B32; 35Q55; 37C75; 74J30; 78A60}
\thanks{We are grateful to Reika Fukuizumi, Katharina Kienecker and Masaya Maeda for
helpful discussions and comments. 
We also thank the anonymous referee for constructive remarks on the manuscript. 
F.G. acknowledges the support of the ERC Advanced
Grant ``Nonlinear studies of water flows with vorticity''. B.A.M.
acknowledges a partial support from the National Science Center of Poland in
the framework of HARMONIA program no.~2012/06/M/ST2/00479. The work of
R.M.W. has been supported by the Hertha-Firnberg Program of the FWF, Grant
T402-N13.}
\keywords{nonlinear Schr\"{o}dinger equation; cubic-quintic nonlinearity;
trapping delta potential; bifurcation; stability}

\begin{abstract}
We study the one-dimensional nonlinear Schr\"{o}dinger equation with the
cubic-quintic combination of attractive and repulsive nonlinearities, and a
trapping potential represented by a delta-function. We determine all bound
states with a positive soliton profile through explicit formulas and, using
bifurcation theory, we describe their behavior with respect to the
propagation constant. This information is used to prove their stability by
means of the rigorous theory of orbital stability of Hamiltonian systems.
The presence of the trapping potential gives rise to a regime where two
stable bound states coexist, with different powers and same propagation
constant.
\end{abstract}

\maketitle


\section{Introduction}

In this paper we study the one-dimensional nonlinear Schr\"{o}dinger (NLS)
equation with the cubic-quintic (CQ) combination of attractive and repulsive
nonlinearities, and a trapping potential represented by a delta-function:
\begin{equation}
i\psi _{z}=-\psi _{xx}-\epsilon \delta (x)\psi -2|\psi |^{2}\psi +|\psi
|^{4}\psi , \quad (x,z)\in\real^2,  \label{nls}
\end{equation}%
for complex $\psi =\psi (x,z)$, and $\epsilon >0$. The objective of the
analysis is the existence and stability of localized \emph{bound states}, in
the form of $\psi (x,z)=\mathrm{e}^{ikz}u(x)$, with $u(x)>0$ satisfying the
respective stationary equation:
\begin{equation}
u^{\prime \prime }-ku+\epsilon \delta (x)u+2u^{3}-u^{5}=0, \quad x\in\real.  \label{solequ}
\end{equation}%
Here and henceforth, $^{\prime }$ stands for differentiation with respect to
$x$. Denoting by $\langle \cdot ,\cdot \rangle $ the duality product between
$H^{-1}(\mathbb{R})$ and $H^{1}(\mathbb{R})$ --- and recalling that $H^{1}(%
\mathbb{R})\subset C(\mathbb{R})\cap L^{\infty }(\mathbb{R})$ ---, the
potential $\delta $ appearing in the soliton equation \eqref{solequ} is the
Dirac distribution at $x=0$, defined by $\langle \delta ,u\rangle =u(0)$ for
all $u\in H^{1}(\mathbb{R})$. In the context of \eqref{nls}, $\delta $ is
interpreted similarly (at each fixed $z$). Hence, \eqref{nls} and %
\eqref{solequ} should be understood in the sense of distributions, even
though the solutions will be smooth outside of $x=0$.

For the sake of brevity, in what follows below we call these
localized bound states `solitons'. Originally, only solitary
waves in integrable systems were called solitons, but in current literature
this term is used in a loose sense, meaning all kinds of stable self-trapped
modes, including those in non-integrable systems.

Problem \eqref{nls}--\eqref{solequ} belongs to a family of models featuring
the competition between self-focusing cubic and defocusing quintic terms,
that have drawn considerable attention in both the physical and the
mathematical communities in recent years, see \cite{bm,jf,gdm,lc,pkh,y,y2}
and the references therein. This combination of nonlinearities is well known
in optical media, including liquid waveguides \cite{Brazil-soliton} and
speciality glasses \cite{Angers}. Especially interesting are colloids
containing metallic nanoparticles, where the CQ nonlinearity can be widely
adjusted by selecting the radius of the suspended nanoparticles and the
colloidal filling factor \cite{Brazil}. Remarkably, the one-dimensional NLS
equation with the CQ nonlinearity admits completely stable exact soliton
solutions \cite{Canada,Pushkarov}, although this equation is not integrable.
The exact soliton solutions are available also in the case
when both the cubic and quintic terms in the one-dimensional NLS equation
have the self-focusing sign \cite{Pelin}. In the absence of linear potential,
the rigorous stability analysis of
one-dimensional NLS solitons with general double-power nonlinearities can be
found in \cite{maeda,ohta}. Then, the effective linear potential term added
to the NLS equation represents a trapping (waveguiding) structure for light
beams, induced by an inhomogeneity of the local refractive index. In
particular, the delta-function term adequately represents a narrow trap
which is able to capture broad solitonic beams.

Existence and stability of bound states of one-dimensional NLS equations
with a delta potential and a single power-law nonlinearity $|\psi
|^{p-1}\psi ,\ p>1$, have been extensively discussed earlier. We refer the
reader to \cite{fo,jf,lc,ma} for more information about this. From the
mathematical point of view, the presence of the delta-function potential has
several interesting consequences. The range of values of the propagation
constant for solutions in free space (i.e., with $\epsilon =0$) is $k\in (0,%
\frac{3}{4})$, and in this case the bifurcation
diagram for the bound states is very simple; see Fig.~\ref{curve0.fig} in
Section~\ref{num.sec}. Namely, the solutions can be parametrized by $k\in (0,%
\frac{3}{4})$, they bifurcate from $u=0$ at $k=0$, their $L^{2}$ norm
(i.e., the integral power of the beam in the optical models) is strictly increasing, and 
diverges as $k\nearrow \frac{3}{4}$. This accounts for the saturation of the nonlinear
refractive index for high-power beams in the CQ optical media. The presence
of the potential gives rise to a fold bifurcation point located to the right
of $k=\frac{3}{4}$. The bifurcating curve now starts off the trivial line at
$k=\frac{\epsilon^{2}}{4}$, can be parametrized by $k$ up to $\overline{k}%
_\epsilon=\frac{3}{4}+\frac{\epsilon ^{2}}{4}$ where it `turns backwards',
and again blows up in $L^{2}(\mathbb{R})$, but now as $k\searrow \frac{3}{4}$. 
The respective bifurcation diagrams are displayed in Fig.~\ref{curve1.fig}%
--\ref{curve3.fig} in Section~\ref{num.sec}, for various values of the
coupling constant $\epsilon >0$. This phenomenon was already observed in
\cite{gdm}, where solitons in a cubic focusing--quintic defocusing medium
with a square-well potential were studied by means of numerical methods and
the variational approximation. In the case of the delta potential considered
here, the fold bifurcation can be described by an exact analysis, as
demonstrated in Section~\ref{bif.sec}. Since the parametrization by $k$
breaks down at $k=\overline{k}_\epsilon$, where the linearization of %
\eqref{solequ} becomes singular, we resort to a result of Crandall and
Rabinowitz \cite{cr73} which provides a natural framework to deal with this
situation.

An important remark at this stage is the multiplicity of positive solutions
of \eqref{solequ} for $k\in (\frac{3}{4},\overline{k}_\epsilon)$. In fact,
the first step of our analysis, in Section~\ref{expsol.sec}, is the \emph{%
explicit determination of all positive solutions} of \eqref{solequ}, in
terms of elementary functions; this is a noteworthy feature of the present
model. Of course, the expressions obtained are somewhat cumbersome, yet we
are able to extract important information from them, notably as regards the
stability of the bound states of \eqref{nls}. We will thus show explicitly
that, for each fixed $k\in (\frac{3}{4},\overline{k}_\epsilon)$, there are
exactly two positive solutions of \eqref{solequ}, and that the corresponding
bound states of \eqref{nls} are both stable. This \emph{bistability}
phenomenon was previously observed numerically in \cite{gdm} for the square-well 
potential; see also \cite{y2}. In the present context, we can prove the
stability rigorously. The fact that the `upper branch' is stable, while the $%
L^{2}$ norm of the solutions is decreasing along it, appeared puzzling when
it was first discovered in \cite{gdm}. However, in the case of a
delta potential considered here, a careful analysis reveals that the spectrum of the
linearization of \eqref{solequ} is strictly positive along
the upper branch, and its stability then follows from the general theory of
orbital stability in \cite{gss}. Along the `lower branch', the linearized
operator has one simple negative eigenvalue, and the rest of its spectrum is
positive. In this case, the Vakhitov--Kolokolov (VK) stability criterion
\cite{vk} (which requires the $L^{2}$ norm to be increasing in $k$)
ensures stability. Note that, for each fixed $k\in (\frac{\epsilon^{2}}{4%
},\frac{3}{4})$, the positive solution of \eqref{solequ} is unique, and the
corresponding bound state is also stable. Therefore, all positive solutions
of \eqref{solequ} give rise to stable bound states of \eqref{nls}. The
stability analysis is carried out in full detail in Section~\ref{stab.sec}.

It is noteworthy that there is no stability swap at the fold bifurcation point, in sharp
contrast with the usual picture in finite-dimensional dynamical systems.
Moreover, the bistability of coexisting bound states with different powers and same
propagation constant offers potential applications to optics in terms of
switching and other elements of all-optical data processing \cite{gdm}.

We would also like to comment on the important role symbolic computer
calculations (using Mathematica) and numerical simulations played in our
analysis. Mathematica was a powerful tool to compute exact formulas that
were too involved to be dealt with manually. This transpires both in the
calculation of solutions in the regime $k\in (\frac{3}{4},\overline{k}%
_\epsilon)$ in Section~\ref{expsol.sec}, and in the stability analysis of
Section~\ref{stab.sec}. On the other hand, numerical experiments were very
useful at early stages of this work, in order to understand the behavior of
solutions, before their explicit representations had been found. We used the
so-called `continuous normalized gradient flow' (CNGF), which was studied
and implemented in \cite{bao} in the context of the NLS equation with a
cubic nonlinearity. The excellent agreement between the numerical and the
exact solutions (see Fig.~\ref{fig2}--\ref{fig6}) demonstrates the
effectiveness of this scheme in the context of \eqref{solequ}. The CNGF
method being based on constraint minimization (see Section~\ref{num.sec}),
this also suggests that the positive solutions of \eqref{solequ} should
admit a variational characterization. Our analytical approach allows us to
describe the spectral and stability properties of the bound states of %
\eqref{nls} without resorting to such a characterization. This
would however present an interest on its own; see for instance \cite{jf} for
results in this direction in the case of a delta potential combined with a
single power nonlinearity.


Lastly, it is relevant to mention that recent numerical and analytical
considerations have demonstrated that the same delta-like attractive
potential may effectively stabilize trapped solitons in the NLS equation
with a combination of defocusing cubic and focusing quintic terms (the signs
opposite to those dealt with in the present work) \cite{Zegadlo}. In the
free-space version of the latter equation, all solitons are completely
unstable.

Note that, in the present work, we have decided to focus on the case of an attractive delta potential, 
i.e., $\ep>0$. Even though this would deserve a rigorous proof, the physical intuition is that
the repulsive case ($\ep<0$) yields unstable solitons, which will tend to escape from the trapped state,
to the left or to the right of $x=0$. We conjecture that, in this case, the 
$H^1$ solutions are unstable under general $H^1$ perturbations, 
but remain stable under radial perturbations, as in the case of a single-power nonlinearity studied
in \cite{jf,lc}.


\section{Explicit solutions}

\label{expsol.sec}

We first establish some elementary properties of $H^1$ solutions of \eqref{solequ}. 
In particular they are all positive. Sign-changing
solutions exist in the form of cnoidal waves pinned to the delta potential, but those
are periodic solutions, not localized ones.

\begin{proposition}
\label{basic.prop} Let $k>0$ and $u\in H^1(\mathbb{R})$ be a 
non-trivial solution of \eqref{solequ}. Then $u$ satisfies:

\begin{itemize}
\item[(i)] $u^{\prime\prime}- ku + 2u^3 - u^5 = 0, \ x\neq0$;

\item[(ii)] $\pm u>0$ on $\real$;

\item[(iii)] $u$ is even on $\mathbb{R}$;

\item[(iv)] $u\in C^2(\mathbb{R}\setminus\{0\})\cap C(\mathbb{R})$;

\item[(v)] $u^{\prime}(0^\pm)=\mp(\epsilon/2)u(0)$;

\item[(vi)] $u(x), u^{\prime}(x) \to 0$ as $|x|\to\infty$.
\end{itemize}
\end{proposition}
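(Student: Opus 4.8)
The plan is to establish the five properties essentially in the order stated, exploiting the fact that away from the origin the delta term vanishes, so the equation reduces to a classical ODE. Let me think about what each part requires and where the real work lies.

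Property (i) is immediate: for a test function supported away from $0$, the pairing $\langle\delta,u\rangle$ contributes nothing, so the distributional equation restricts to $u'' - ku + 2u^3 - u^5 = 0$ on $\mathbb{R}\setminus\{0\}$.

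Property (ii) — regularity. Since $u \in H^1(\mathbb{R}) \subset C(\mathbb{R})$, continuity across $0$ is given. Away from $0$, I'd bootstrap: $u$ is continuous, hence $2u^3 - u^5$ is continuous, so from (i) $u''$ is continuous on each half-line, giving $u \in C^2$ there; differentiating the ODE shows $u$ is in fact smooth on $\mathbb{R}\setminus\{0\}$, but $C^2$ suffices for the statement.

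Property (iii) — the jump condition. This is where the delta potential genuinely enters and is the conceptual heart of the proof. I'd test the distributional equation against $\varphi \in C_c^\infty(\mathbb{R})$, integrate the $u''$ term by parts over $\mathbb{R}$, and isolate the boundary contribution at $0$. Because $u'$ is continuous on each half-line but may jump across $0$, integrating $\int u'' \varphi$ produces the jump $[u'(0^+) - u'(0^-)]\varphi(0)$, which must balance the $\epsilon\langle\delta,u\rangle\varphi = \epsilon\, u(0)\varphi(0)$ term. This yields $u'(0^+) - u'(0^-) = -\epsilon u(0)$.

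Property (iv) — evenness. By (iii) the jump is symmetric in a way consistent with an even profile, but to prove evenness I'd argue as follows. The function $v(x) := u(-x)$ solves the same equation (the delta is symmetric and the nonlinearity is a function of $u$), satisfies the same decay, and — crucially — shares the value $u(0)$ and the jump condition. So $u$ and $v$ are both solutions of the ODE on, say, $(0,\infty)$ with the same Cauchy data at $0^+$: indeed $u(0^+)=v(0^+)=u(0)$ and, using (iii) together with the symmetry $v'(0^+) = -u'(0^-) = (\epsilon/2)u(0) = u'(0^+)$, the first derivatives match as well. Uniqueness for the second-order ODE then forces $u\equiv v$ on $(0,\infty)$, hence on $\mathbb{R}$. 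This symmetry argument, combined with (iii), also pins down the one-sided derivative values $u'(0^\pm)=\mp(\epsilon/2)u(0)$.

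Property (v) — decay at infinity. Since $u \in H^1(\mathbb{R})$, a standard argument gives $u(x)\to 0$ as $|x|\to\infty$ (an $H^1$ function on the line vanishes at infinity). For $u'$, I would use the ODE: on $(R,\infty)$ with $R$ large, $u$ is small, so the equation is a small perturbation of $u'' = ku$ with $k>0$; multiplying by $u'$ and integrating yields conservation of the energy $\tfrac12(u')^2 - \tfrac12 k u^2 + \tfrac12 u^4 - \tfrac16 u^6 = E$, and since both $u$ and (by $H^1$) $u'$ are square-integrable, the constant $E$ must be $0$, whence $(u')^2 = k u^2 - u^4 + \tfrac13 u^6 \to 0$.

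The main obstacle is the careful handling of property (iii): one must correctly account for the distributional second derivative across the singularity, keeping track of the direction of the jump. The factor of $\tfrac12$ in $u'(0^\pm)=\mp(\epsilon/2)u(0)$ emerges precisely from splitting the total jump $-\epsilon u(0)$ symmetrically once evenness is known — so (iii) and (iv) are intertwined and should be proved in tandem.
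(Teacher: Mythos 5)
Your treatment of (i), (ii), (v) and of the total jump $u'(0^+)-u'(0^-)=-\epsilon u(0)$ is correct and is essentially what the paper relies on (it cites Lemmas 3.1--3.2 of Fukuizumi--Jeanjean for these points). The genuine gap is in the passage from the total jump to the pointwise values $u'(0^\pm)=\mp(\epsilon/2)u(0)$ and to evenness: as written, your argument is circular. To prove (iv) you set $v(x):=u(-x)$ and match Cauchy data at $0^+$, but the derivative matching $v'(0^+)=-u'(0^-)=u'(0^+)$ invokes the symmetric splitting of the jump, i.e.\ the pointwise form of (iii); and you then state that this splitting ``emerges precisely from splitting the total jump symmetrically once evenness is known.'' Each of (iii) and (iv) is thus derived from the other, and observing that they are ``intertwined'' does not supply the missing independent input. (There is also a sign slip: (iii) gives $u'(0^-)=+(\epsilon/2)u(0)$, so $-u'(0^-)=-(\epsilon/2)u(0)$, not $+(\epsilon/2)u(0)$.)

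The missing ingredient is already in your own part (v): the first integral. Multiplying the ODE by $u'$ and integrating from $x>0$ to $+\infty$, respectively from $-\infty$ to $x<0$, and using the decay of $u$ and $u'$ (which you establish without appealing to (iii) or (iv)), gives $(u')^2=ku^2-u^4+\tfrac13 u^6$ on each half-line separately, with the same energy constant $0$ on both sides. Letting $x\to 0^\pm$ yields $\bigl(u'(0^+)\bigr)^2=\bigl(u'(0^-)\bigr)^2$ with no symmetry assumption. Moreover $u(0)\neq 0$, since otherwise the first integral forces $u'(0^\pm)=0$ and ODE uniqueness on each half-line gives $u\equiv 0$. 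The jump relation $u'(0^+)-u'(0^-)=-\epsilon u(0)\neq 0$ then excludes $u'(0^+)=u'(0^-)$, so $u'(0^+)=-u'(0^-)$ and the pointwise form of (iii) follows. Only at this stage does your Cauchy-uniqueness argument for (iv) close; the paper carries out the same step in the equivalent form $w(x):=u(x)-u(-x)$, which solves a linear second-order ODE with $w(0)=w'(0)=0$ and hence vanishes identically.
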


\begin{proof}
Properties (i) and (iv) follow by a standard bootstrap argument 
using test functions in $C^\infty_0(\real\sm\{0\})$ (see for instance \cite[Section~8]{cazenave}). 
This argument in fact yields 
$u\in H^2(\real\sm\{0\})$, from which (vi) follows.

For (ii), first observe that, if $u$ is a solution, so is $-u$. Now suppose by contradiction that
there exists $x_0>0$ such that $u(x_0)=0$ (the case $x_0<0$ is handled similarly). If 
$u'(x_0)=0$, Cauchy's uniqueness theorem implies $u\equiv 0$ on $(0,\infty)$. Since $u$ is
continuous, \eqref{jump} below then implies $u\equiv 0$ on $\real$. Suppose now that 
$u'(x_0)\neq0$. Multiplying the equation by $u'$ and integrating from $x_0$ to $x>0$ yields
$$
(u')^2(x)-ku^2(x)+u^4(x)-\frac13u^6(x)=(u')^2(x_0)>0 \quad\text{for all} \ x>0.
$$
However, integrating from $x$ to $+\infty$ and using $u\in H^2(\real\sm\{0\})$ yields
$$
(u')^2(x)-ku^2(x)+u^4(x)-\frac13u^6(x)=0 \quad\text{for all} \ x>0.
$$
This contradiction shows that $\pm u>0$ on $\real$.

To prove (v), one first establishes that 
\begin{equation}\label{jump}
u^{\prime}(0^+)-u^{\prime}(0^-)=-\epsilon u(0)
\end{equation}
by integrating \eqref{solequ} over $[-t,t]$ and letting $t\to0^+$. Then, multiplying
\eqref{solequ} by $u'$ and integrating from $0^+$ to $+\infty$ yields
$$
(u')^2(0^+)=ku^2(0)-u^4(0)+\frac13 u^6(0).
$$
Similarly, integrating from $-\infty$ to $0^-$ yields
$$
(u')^2(0^-)=ku^2(0)-u^4(0)+\frac13 u^6(0),
$$
so that $|u'(0^+)|=|u'(0^-)|$. Now, if $u'(0^+)=u'(0^-)$ then $u(0)=0$ by \eqref{jump}.
If $u'(0^+)=u'(0^-)=0$ then Cauchy's theorem implies $u\equiv 0$. On the other hand,
if $u'(0^+)=u'(0^-)\neq0$ then $u$ becomes negative close to $x=0$, a contradiction.
Therefore, $u'(0^+)=-u'(0^-)$ and (iv) follows from \eqref{jump}.

 Finally, (iii) follows by observing that $w(x):=u(x)-u(-x)$
 satisfies the initial value problem
 \begin{equation*}
 w^{\prime \prime }(x)-kw(x)+2a(x)w(x)-b(x)w(x)=0, \quad
 w(0)=w^{\prime}(0)=0,
 \end{equation*}
 where
 \begin{equation*}
 a(x)=u^2(x)+u(x)u(-x)+u^2(-x), \quad
 b(x)=u^4(x)+u^3(x)u(-x)+u^2(x)u^2(-x)+u(x)u^3(-x)+u^4(-x).
 \end{equation*}
Cauchy's theorem then implies $w\equiv0$.
\end{proof}

We shall henceforth focus on positive solutions. We will show that
they can all be expressed in terms of elementary functions, which is a remarkable
feature of the present model. This is
especially striking in the range of the propagation constant $k>3/4$ which
is not allowed in free space (i.e., when $\epsilon=0$) \cite%
{Canada,Pushkarov}. 

First, multiplying \eqref{solequ} by $u^{\prime }$ and integrating from $x>0$
to $\infty $, respectively from $-\infty $ to $x<0$, we get
\begin{equation}
(u^{\prime})^2(x)-ku^{2}(x)+u^{4}(x)-(1/3)u^{6}(x)=0,\quad x\neq 0.
\label{1stint}
\end{equation}
In particular, taking the limit $x\rightarrow 0^{\pm }$ and using
Proposition~\ref{basic.prop}~(v) yields, assuming that $u(0)\neq 0$,
\begin{equation}
u^{4}(0)-3u^{2}(0)+3(k-\epsilon ^{2}/4)=0,
\end{equation}
the solutions of which are
\begin{equation}  \label{uopm}
u_{\pm ,k,\epsilon }^{2}(0)= \textstyle\frac{3}{2}\left(1\pm \sqrt{1-\frac{4%
}{3}\left( k-\frac{\epsilon ^{2}}{4}\right) }\right).
\end{equation}
Note that both $u^2_{\pm ,k,\epsilon}(0)$ exist and are positive if and only
if
\begin{equation}
\frac{\epsilon ^{2}}{4}<k\leqslant \frac{3}{4}+\frac{\epsilon ^{2}}{4}.
\label{existcond}
\end{equation}
Next, with a view of further integrating \eqref{1stint}, we express $%
u^{\prime}$ as
\begin{equation}
u^{\prime }(x)=\pm u(x)\textstyle\sqrt{\frac{1}{3}u^{4}(x)-u^{2}(x)+k},\quad
x\neq 0  \label{u'}
\end{equation}
(recall we seek solutions with $u>0$). The positivity condition for %
\eqref{u'} to hold reads $u^{2}(x)\in (0,\tilde{u}_{-}^{2})\cup (\tilde{u}%
_{+}^{2},\infty )$, where
\begin{equation}
\tilde{u}_{\pm }^{2}=\textstyle\frac{3}{2}\left( 1\pm \sqrt{1-\frac{4k}{3}}%
\right) .
\end{equation}
Since positive solutions satisfying \eqref{u'} are even and strictly
decreasing in $x>0$, the continuity and the decay of $u$ at infinity only allow
for\footnote{%
Note that $\tilde{u}_{-}^{2}=\psi _{\text{sol}}(0,0)^{2}$ in \cite{bm}.}
\begin{equation}
u^{2}(0)\leqslant \tilde{u}_{-}^{2}= \textstyle\frac{3}{2}\left( 1-\sqrt{1-%
\frac{4k}{3}}\right) ,\quad \frac{\epsilon ^{2}}{4}<k<\frac{3}{4}.
\label{poscond}
\end{equation}
If $k> 3/4$, \eqref{u'} is well defined without further restriction on $u(0)$%
, and condition \eqref{poscond} is void. (The nature of the degeneracy at $%
k=3/4$ will become more apparent later.) In view of \eqref{uopm}, %
\eqref{existcond} and \eqref{poscond}, we identify two different regimes
(see the bifurcation diagrams for various values of $\epsilon$ in Section~%
\ref{num.sec}):

\begin{itemize}
\item[(A)] $\frac{\epsilon ^{2}}{4}<k<\frac{3}{4}$: there is only one
soliton, $u_{-,k,\epsilon}$, corresponding to $u_{-,k,\epsilon}^{2}(0)$;

\item[(B)] $\frac{3}{4} < k < \frac{3}{4} +\frac{\epsilon ^{2}}{4}$: there
are two different solitons, $u_{\pm,k,\epsilon}$, corresponding respectively
to $u_{\pm,k,\epsilon}^{2}(0)$.
\end{itemize}

\medskip Notice, in particular, that regime (A) is void if $\epsilon
\geqslant \sqrt{3}$; so \emph{we will suppose $0<\epsilon <\sqrt{3}$ from
now on}. Also, we already see from the above analysis that a fold
bifurcation occurs at $\overline{k}_\epsilon=\frac{3}{4}+\frac{\epsilon ^{2}%
}{4}$ where two distinct solutions merge and disappear (there is no soliton
for $k>\overline{k}_\epsilon$). 

From \eqref{u'} and the previous discussion, any positive solution of %
\eqref{solequ} with $\frac{\epsilon ^{2}}{4}<k<\frac{3}{4}+\frac{\epsilon
^{2}}{4}$ decaying at infinity satisfies
\begin{equation}
u^{\prime }(x)=-\,\mathrm{sgn}(x)u(x)\textstyle\sqrt{\frac{1}{3}
u^{4}(x)-u^{2}(x)+k},\quad x\neq 0.  \label{sepvar}
\end{equation}
In particular, $u$ is even, $u^{\prime }(x)<0$ for $x>0$, and $%
\lim_{x\rightarrow \infty }u^{\prime }(x)/u(x)=-\sqrt{k}$, so $u(x)$ decays
like $\mathrm{e}^{-\sqrt{k}|x|}$ as $|x|\to\infty$.

Now \eqref{sepvar} is a first order ODE with separated variables, which can
be integrated explicitly. Alternatively, the solutions in regime (A) are
easily constructed by applying some surgery to the known explicit solitons
in free space, given in \cite{Canada,Pushkarov} as
\begin{equation}
u_{-,k,0}(x)=\sqrt{\frac{2k}{1+\sqrt{1-\frac{4k}{3}}\cosh \big(2\sqrt{k}x%
\big)}},\quad \textstyle0<k<\frac{3}{4}.  \label{freespacesol}
\end{equation}
The corresponding solutions pinned to the delta potential with $\epsilon >0$
are obtained as
\begin{equation}
u_{-,k,\epsilon }(x)=\sqrt{\frac{2k}{1+\sqrt{1-\frac{4k}{3}} \cosh\big(2%
\sqrt{k}(|x|+\xi )\big)}},\quad \textstyle\frac{\epsilon ^{2}}{4}<k<\frac{3}{%
4},  \label{pinnedsol}
\end{equation}
where $\xi =\xi (k,\epsilon )$ is determined by the jump condition in
Proposition~\ref{basic.prop}~(v), which yields
\begin{equation*}
\frac{\sinh (2\sqrt{k}\xi )}{1+\sqrt{1-\frac{4k}{3}} \cosh \big(2\sqrt{k}\xi%
\big)}=\frac{\epsilon }{2\sqrt{k}}\frac{1}{\sqrt{1-\frac{4k}{3}}}.
\end{equation*}
It is not difficult to check that this equation has a unique solution $%
\xi\in \mathbb{R}$ if $k>\frac{\epsilon ^{2}}{4}$. In fact this solution can
be computed explicitly:
\begin{equation*}
\mathrm{e}^{2\sqrt{k}\xi }= \frac{\epsilon +\epsilon \sqrt{1+(\frac{4k}{%
\epsilon ^{2}}-1)(1-\frac{4k}{3})}}{2(\sqrt{k}-\frac{\epsilon }{2}) \sqrt{1-%
\frac{4k}{3}}}.
\end{equation*}
Thus, the solutions in \eqref{pinnedsol} take the form of
\begin{equation}
u_{-,k,\epsilon }(x)=\sqrt{\frac{2k}{1+\frac{\epsilon + \epsilon \sqrt{%
1+(4k/\epsilon ^{2}-1)(1-4k/3)}}{4(\sqrt{k}-\epsilon /2)}\mathrm{e}^{2\sqrt{k%
}|x|} +\frac{(1-4k/3)(\sqrt{k}-\epsilon /2)}{\epsilon +\epsilon \sqrt{%
1+(4k/\epsilon ^{2}-1)(1-4k/3)}}\mathrm{e}^{-2\sqrt{k}|x|}}},\quad \textstyle%
\frac{\epsilon ^{2}}{4}<k<\frac{3}{4}.  \label{expinnedsol}
\end{equation}


For $k=3/4$, a similar procedure applied to the `front soliton' given in
Eq.~(11) of \cite{bm} yields a solution
\begin{equation}  \label{3/4}
u_{f,\epsilon}(x)= \sqrt{\frac{3}{2}}\left[1+\frac{\epsilon }{\sqrt{3}%
-\epsilon }\mathrm{e}^{\sqrt{3}|x|}\right]^{-1/2}.
\end{equation}

As can be seen in Section~\ref{num.sec} by comparing the bifurcation
diagrams for solutions in free space to those with $\epsilon >0$, solutions
with $k>3/4$ only exist in the presence of the potential. In other
words, regime (B) above is void for $\epsilon =0$. Therefore, no free-space
solutions are available that could be pinned to the delta potential by the
same sort of surgery as above, and one has to integrate the equation
manually. We integrate \eqref{sepvar} using an Euler substitution, which
yields
\begin{equation}
u_{\pm ,k,\epsilon }(x)=2\sqrt{\frac{k}{\left( \mathrm{e}^{\sqrt{k}(|x|-c)}+
\mathrm{e}^{-\sqrt{k}(|x|-c)}\right) \Big(\Big(2\sqrt{\frac{k}{3}}+1\Big)
\mathrm{e}^{\sqrt{k}(|x|-c)}-\Big(2\sqrt{\frac{k}{3}}-1\Big)\mathrm{e}^{-%
\sqrt{k}(|x|-c)}\Big)}},\quad \textstyle\frac{3}{4}<k<\frac{3}{4}+\frac{%
\epsilon ^{2}}{4},  \label{regime2}
\end{equation}
where the integration constant $c=c_{\pm ,k,\epsilon }\in \mathbb{R}$ can be
determined from \eqref{uopm}. The expressions for the integration constants
are somewhat cumbersome. They can be computed using Mathematica, which yields%
\footnote{%
The positivity of the expressions under the square roots can be checked by
plotting their graphs (as functions of $\epsilon $ and $k$) in Mathematica.}
\begin{equation*}
\mathrm{e}^{\sqrt{k}c_{-,k,\epsilon }}= \sqrt{\frac{3-\sqrt{3}\sqrt{%
3+\epsilon ^{2}-4k}+2\epsilon \sqrt{k}-4k}{-3+\sqrt{3}\sqrt{3+\epsilon
^{2}-4k}+2\sqrt{3}\sqrt{k}-2\sqrt{k}\sqrt{3+\epsilon ^{2}-4k}}}
\end{equation*}
and
\begin{equation*}
\mathrm{e}^{\sqrt{k}c_{+,k,\epsilon }}= \sqrt{\frac{-3-\sqrt{3}\sqrt{%
3+\epsilon ^{2}-4k} -2\epsilon \sqrt{k}+4k}{3+\sqrt{3}\sqrt{3+\epsilon
^{2}-4k} -2\sqrt{3}\sqrt{k}-2\sqrt{k}\sqrt{3+\epsilon ^{2}-4k}}}.
\end{equation*}

Hence the explicit form \eqref{regime2} is not very convenient to work with,
but we shall see in Section~\ref{stab.sec} that some information can
nevertheless be extracted from it. However, for given values of the
parameters, the exact form of the solutions may be useful, especially in
numerical calculations. For instance, at the fold bifurcation point, where $%
\overline{k}_\epsilon=\frac{3}{4}+\frac{\epsilon ^{2}}{4}$, the solution
takes the more tractable form:
\begin{equation*}
\overline{u}_{\epsilon }(x)=\sqrt{\frac{3}{2}}\sqrt{\frac{3+\epsilon ^{2}}{%
3+\epsilon ^{2} \cosh (\sqrt{3+\epsilon ^{2}}|x|)+\epsilon \sqrt{%
3+\epsilon^{2}}\sinh (\sqrt{3+\epsilon ^{2}}|x|)}}.  \label{ubar}
\end{equation*}

\begin{remark}
\rm 
It can also be checked that, as $\epsilon \rightarrow 0$, the
solutions in \eqref{expinnedsol} converge to the corresponding free-space
solitons in \eqref{freespacesol}. It will be seen in the proof of Lemma~\ref%
{holomorphy.lem} that, in fact, they can be extended to a holomorphic family
of functions parametrized by $\epsilon$ in a complex domain containing zero.
\end{remark}


\section{The bifurcation analysis}

\label{bif.sec}

In this section we will embed the above explicit solutions in a
bifurcation-theoretic framework, suitable to the rigorous stability analysis
which will be carried out in Section~\ref{stab.sec}. We will prove the
following result.

\begin{theorem}
\label{curve.thm} Let $\epsilon\in(0,\sqrt{3})$. The solutions $(k,u)$ of %
\eqref{solequ} obtained in \eqref{expinnedsol}--\eqref{regime2} form a
smooth curve in $\mathbb{R}\times H^{1}(\mathbb{R})$, which bifurcates from
the trivial solution $u\equiv 0$ at $k=\frac{\epsilon ^{2}}{4}$, consists of
the solutions $u_{-,k,\epsilon }$ up to $\overline{k}_\epsilon=\frac{3}{4}+%
\frac{\epsilon ^{2}}{4}$, where it has a turning point, and then consists of
the solutions $u_{+,k,\epsilon }$ and becomes unbounded as $k\searrow 3/4$.
More precisely,
\begin{equation*}
\lim_{k\searrow \frac{\epsilon ^{2}}{4}}\Vert u_{-,k,\epsilon }\Vert
_{H^{1}}=0\quad \text{and}\quad \lim_{k\searrow 3/4}\Vert
u_{+,k,\epsilon}\Vert _{L^{2}}=\infty .
\end{equation*}
\end{theorem}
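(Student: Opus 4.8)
The plan is to drop the propagation constant $k$ as the curve parameter and use the peak value $s:=u(0)$ instead, since it is exactly the breakdown of the $k$-parametrization at $\overline{k}$ that the Crandall--Rabinowitz framework \cite{cr73} is meant to circumvent. Evaluating the first integral \eqref{1stint} at $x=0^{+}$ and inserting the jump condition $u'(0^{+})=-(\epsilon/2)u(0)$ of Proposition~\ref{basic.prop}~(iii) reproduces the quadratic \eqref{uopm} in the form $s^{4}-3s^{2}+3(k-\epsilon^{2}/4)=0$, i.e.
\begin{equation*}
k(s)=\frac{\epsilon^{2}}{4}+s^{2}-\frac{s^{4}}{3}.
\end{equation*}
This is smooth, with $k'(s)=2s(1-\tfrac{2}{3}s^{2})$ and a unique interior critical point at $s=\sqrt{3/2}$, where $k(\sqrt{3/2})=\overline{k}$ and $k''(\sqrt{3/2})=-4\neq0$; this nondegeneracy is precisely what makes $\overline{k}$ a genuine quadratic turning point. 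The admissible range is $s\in[0,s_{1})$ with $s_{1}=\sqrt{\tfrac{3}{2}(1+\epsilon/\sqrt{3})}$: the lower branch $u_{-,k,\epsilon}$ corresponds to $s\in(0,\sqrt{3/2})$ with $k$ increasing from $\epsilon^{2}/4$ to $\overline{k}$ (crossing $k=3/4$, where the front soliton \eqref{3/4} sits, at the interior value $s^{2}=\tfrac{3}{2}(1-\epsilon/\sqrt{3})$), and the upper branch $u_{+,k,\epsilon}$ to $s\in(\sqrt{3/2},s_{1})$ with $k$ decreasing from $\overline{k}$ back to $3/4$.

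Next I would realize $u(\cdot;s)$ as the solution on $[0,\infty)$ of the autonomous initial value problem $u''=k(s)u-2u^{3}+u^{5}$, $u(0)=s$, $u'(0)=-(\epsilon/2)s$, extended evenly. By the very definition of $k(s)$ the first integral \eqref{1stint} vanishes along this orbit, so it lies on the zero energy level, i.e. on the stable manifold of the saddle $(0,0)$, and thus decays like $\e^{-\sqrt{k(s)}|x|}$; this identifies it with the explicit functions \eqref{expinnedsol}--\eqref{regime2} and shows that no case split between regimes (A) and (B), nor a separate treatment of the front soliton \eqref{3/4}, is needed: they form a single smooth family in $s$. Smooth dependence of ODE solutions on data and parameters gives $s\mapsto u(\cdot;s)$ smooth into $C^{1}_{\mathrm{loc}}$, and upgrading this to $H^{1}(\mathbb{R})$ needs exponential bounds on $u$ and its $s$-derivatives that are uniform on compact subsets of $[0,s_{1})$, which hold because $k(s)$ is bounded below there by a positive constant. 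Crucially, $s$ is a regular parameter across $s=\sqrt{3/2}$, so the curve traverses the fold smoothly even though the linearization of \eqref{solequ} is singular there.

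It remains to treat the endpoints. As $s\searrow0$ one has $k(s)\to\epsilon^{2}/4$ and $u(0)=s\to0$ while the decay rate stays $\geqslant\epsilon/2$, so $\Vert u(\cdot;s)\Vert_{H^{1}}\to0$; this is the bifurcation from the trivial solution at $k=\epsilon^{2}/4$, the value at which the linearization $-\partial_{x}^{2}-\epsilon\delta+k$ of \eqref{solequ} about $u\equiv0$ has nontrivial kernel, spanned by $\e^{-\epsilon|x|/2}$. As $s\nearrow s_{1}$, equivalently $k\searrow3/4$ along the upper branch, the quantity controlling $|u'|$ in \eqref{u'} degenerates, since $\tfrac{1}{3}u^{4}-u^{2}+k=\tfrac{1}{3}(u^{2}-\tfrac{3}{2})^{2}+(k-\tfrac{3}{4})$ has minimum $k-3/4\searrow0$ attained at $u^{2}=3/2$, through which every upper-branch profile passes. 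Hence $u$ develops a plateau at height $u^{2}\approx3/2$ whose width diverges like $\log\tfrac{1}{k-3/4}$ (equivalently the integration constant $c_{+,k,\epsilon}$ in \eqref{regime2} tends to $+\infty$), and bounding $\Vert u\Vert_{L^{2}}^{2}$ below by the plateau contribution $\approx\tfrac{3}{2}\cdot(\text{width})$ forces $\Vert u_{+,k,\epsilon}\Vert_{L^{2}}\to\infty$.

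The main obstacle is the uniform $H^{1}$-control underlying smoothness: establishing exponential bounds on $u(\cdot;s)$ and $\partial_{s}u(\cdot;s)$ uniform on compact $s$-intervals, so that $s$-differentiation commutes with the $x$-integration defining the $H^{1}$ norm, together with the quantitative plateau estimate yielding the logarithmic $L^{2}$-divergence. I expect the smooth matching of the regime-(A) and regime-(B) formulas across $k=3/4$ and the passage through the fold to be painless once $s$ is adopted as parameter --- the former is automatic from the ODE viewpoint and the latter is exactly the role of the Crandall--Rabinowitz setup --- so the genuine work lies in the decay/derivative estimates and the blow-up rate, which can be extracted either from the explicit expressions \eqref{expinnedsol}--\eqref{regime2} or, more robustly, from the holomorphic extension of the family alluded to in the Remark.
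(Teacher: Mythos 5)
Your proposal is correct in outline but follows a genuinely different route from the paper. The paper stays entirely within functional-analytic bifurcation theory: it bifurcates from $(\frac{\epsilon^2}{4},0)$ via Crandall--Rabinowitz \cite{cr71}, gets smoothness of $\mathcal{S}_{\pm,\epsilon}$ from the implicit function theorem once Lemma~\ref{nondegen.lem} shows $D_uF_\epsilon(k,u_{\pm,k,\epsilon})$ is an isomorphism for $k\neq\overline{k}$, glues the two branches at the fold with Theorem~\ref{cranrab.thm} of \cite{cr73} (checking the transversality $\int\overline{u}_\epsilon\eta_\epsilon\neq0$), and obtains the blow-up from Rabinowitz's global theorem \cite{r} plus the pointwise bound $u^2\geqslant C(u')^2$ to convert $H^1$- into $L^2$-divergence. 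You instead adopt the shooting parameter $s=u(0)$, for which $k(s)=\frac{\epsilon^2}{4}+s^2-\frac{s^4}{3}$ is an explicit smooth function with a nondegenerate maximum at $s^2=\frac32$; this makes the fold, the matching of regimes (A) and (B) across $k=\frac34$, and the turning point all transparent without any of \cite{cr71,cr73,r}, and your plateau estimate gives a quantitative (logarithmic) rate for the $L^2$-blow-up that the paper does not provide. What your route buys in geometric clarity it pays for in analysis: the entire content of Proposition~\ref{curves.prop} and Lemma~\ref{nondegen.lem} is repackaged into the claim that $s\mapsto u(\cdot;s)$ is smooth \emph{into $H^1(\mathbb{R})$}, which requires exponential bounds on $\partial_s u$ uniform on compact $s$-intervals (e.g.\ via the smooth stable-manifold theorem for the hyperbolic saddle at the origin, or from the explicit formulas); you correctly identify this as the main remaining work. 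Two small points need saying explicitly: (i) the zero-energy level set is strictly larger than the stable manifold of $(0,0)$ when $k(s)<\frac34$ (it has an unbounded component with $u^2\geqslant\tilde{u}_+^2$), so you must check $s^2<\tilde{u}_-^2(k(s))$, which does hold since $k(s)-s^2+\frac{s^4}{3}=\frac{\epsilon^2}{4}>0$ together with $s^2=u^2_{-,k,\epsilon}(0)<\tilde u_-^2$ from \eqref{uopm}; and (ii) injectivity of $s\mapsto u(\cdot;s)$ is immediate from $u(0;s)=s$, so the image really is an embedded curve. With those details supplied, your argument is a complete and arguably more self-contained proof of Theorem~\ref{curve.thm}.
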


A good mental picture of Theorem~\ref{curve.thm} can be grasped from the
bifurcation diagrams in Section~\ref{num.sec}, where $\Vert
u_{+,k,\epsilon}\Vert_{L^2}$ is plotted against $k$, for various values of
the coupling constant $\epsilon>0$.

%

To prove Theorem~\ref{curve.thm}, first observe that Eq.~\eqref{solequ} can
be formulated as
\begin{equation}
F_{\epsilon }(k,u)=0,  \label{functequ}
\end{equation}
where
\begin{equation}
F_{\epsilon }(k,u):=u^{\prime \prime } -k u + \epsilon \delta(x) u+2
u^{3}-u^{5}  \label{F}
\end{equation}
can be seen as a mapping $F_{\epsilon }:\mathbb{R}\times H^{1}(\mathbb{R}%
)\rightarrow H^{-1}(\mathbb{R})$ (by interpreting the right-hand side as a
distribution). It is standard to show that this mapping is continuously Fr%
\'{e}chet differentiable. The derivative with respect to $u$, which will
play a key role in our analysis, is formally given by $D_{u}F_{%
\epsilon}(k,u):H^{1}(\mathbb{R})\rightarrow H^{-1}(\mathbb{R})$,
\begin{equation*}  \label{DuF}
D_{u}F_{\epsilon }(k,u)v=v^{\prime \prime }-kv+\epsilon \delta (x)v
+[6-5u^{2}] u^{2}v,\quad v\in H^{1}(\mathbb{R}).
\end{equation*}
More precisely, following the proof of \cite[Lemma~10]{lc}, 
$D_{u}F_{\epsilon }(k,u)$ can be interpreted as a self-adjoint operator
acting in $L^{2}(\mathbb{R})$, with domain
\begin{equation*}
\mathcal{D}_{\epsilon }= \left\{v\in H^{1}(\mathbb{R})\cap H^{2}(\mathbb{R}%
\setminus \{0\}):v^{\prime}(0^+)-v^{\prime}(0^-)=-\epsilon v(0)\right\},
\end{equation*}
defined by
\begin{equation*}
D_{u}F_{\epsilon }(k,u)v=v^{\prime \prime} -k v+[6-5u^2]u^{2}v,\quad v\in
\mathcal{D}_{\epsilon}.
\end{equation*}

Using the explicit formulas for the solutions obtained in Section~\ref%
{expsol.sec} (in particular their uniform exponential decay), it can be
shown that
\begin{equation*}
\mathcal{S}_{-,\epsilon }= \big\{(k,u_{-,k,\epsilon }):k\in (\textstyle\frac{%
\epsilon ^{2}}{4},\overline{k}_\epsilon)\big\}\quad \text{and}\quad \mathcal{%
S}_{+,\epsilon }= \big\{(k,u_{+,k,\epsilon }):k\in (\textstyle\frac{3}{4},%
\overline{k}_\epsilon)\big\}
\end{equation*}
define two continuous curves in $\mathbb{R}\times H^{1}(\mathbb{R})$. In the
remainder of the paper, we will obtain much more information about these
sets. It will be convenient to call $\mathcal{S}_{-,\epsilon }$ the \emph{%
lower curve} and $\mathcal{S}_{+,\epsilon }$ the \emph{upper curve}.

\begin{proposition}
\label{curves.prop} The sets $\mathcal{S}_{\epsilon ,\pm }$ are smooth
curves of non-degenerate solutions of \eqref{functequ}, in the sense that $%
D_{u}F_{\epsilon }(k,u)$ is non-singular along $\mathcal{S}_{-,\epsilon }$
and $\mathcal{S}_{+,\epsilon }$. Furthermore, $\mathcal{S}_{-,\epsilon}$
bifurcates from the point $(\frac{\epsilon^{2}}{4},0)$ in $\mathbb{R}\times
H^{1}(\mathbb{R})$, and meets $\mathcal{S}_{+,\epsilon }$ at the point $(%
\overline{k}_\epsilon,\overline{u}_{\epsilon })$, where $D_{u}F_{\epsilon
}(k,u)$ becomes singular.
\end{proposition}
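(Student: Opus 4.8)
The plan is to read the non-degeneracy off the kernel of the \emph{linearized operator}, which I write as $L_+ := -D_uF_\epsilon(k,u) = -\p_x^2 + \big(k - 6u^2 + 5u^4\big)$ with domain $\calD_\epsilon$. Because $u$ decays exponentially (Proposition~\ref{basic.prop}~(v) and the rate computed after \eqref{sepvar}), the potential tends to $0$ at infinity, so $\sigma_{\mathrm{ess}}(L_+)=[k,\infty)$; since $k>0$, the value $0$ lies strictly below the essential spectrum, and $D_uF_\epsilon(k,u)$ is singular if and only if $0$ is an eigenvalue, i.e. $\ker L_+\neq\{0\}$. Thus everything reduces to deciding when $L_+$ has a nontrivial kernel.

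First I would exploit translation: differentiating $u''-ku+2u^3-u^5=0$ shows that $u'$ solves $L_+v=0$ on $(0,\infty)$, and $u'(x)\sim-\sqrt{k}\,u(x)$ decays like $\e^{-\sqrt k|x|}$, so on each half-line the one-dimensional space of $L^2$-solutions of $L_+v=0$ is spanned by $u'$. Since the potential is even, $\ker L_+$ splits into an even and an odd part, each at most one-dimensional. An odd element would have to vanish at $0$, but its half-line trace is a multiple of $u'$ and $u'(0^+)=-(\epsilon/2)u(0)\neq0$, so there are none. An even element is obtained by reflecting $u'|_{(0,\infty)}$; it is automatically continuous, and imposing the jump condition of $\calD_\epsilon$ together with $u''(0^+)=ku(0)-2u^3(0)+u^5(0)$ (from \eqref{solequ}) turns out to be equivalent to
\[
u^4(0)-2u^2(0)+\big(k-\txt\frac{\epsilon^2}{4}\big)=0 .
\]

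The second step is to confront this \emph{kernel condition} with the \emph{soliton condition} $u^4(0)-3u^2(0)+3\big(k-\frac{\epsilon^2}{4}\big)=0$ underlying \eqref{uopm}. Subtracting gives $u^2(0)=2\big(k-\frac{\epsilon^2}{4}\big)$, and substituting back leaves $\big(k-\frac{\epsilon^2}{4}\big)\big(4(k-\frac{\epsilon^2}{4})-3\big)=0$, whose only roots are $k=\frac{\epsilon^2}{4}$ and $k=\overline k$. Hence, for every $k\in(\frac{\epsilon^2}{4},\overline k)$ the kernel condition fails along both $u_{-,k,\epsilon}$ and $u_{+,k,\epsilon}$, so $\ker L_+=\{0\}$ and $D_uF_\epsilon$ is non-singular; whereas at $k=\overline k$, where $u^2(0)=\frac32=2(\overline k-\frac{\epsilon^2}{4})$, the even element survives and $D_uF_\epsilon(\overline k,\ou_\epsilon)$ is singular with one-dimensional kernel. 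Non-singularity promotes $D_uF_\epsilon$ to an isomorphism $H^1\to H^{-1}$ (self-adjointness in $L^2$ plus elliptic regularity), so the implicit function theorem applied to $F_\epsilon=0$ makes $\calS_{-,\epsilon}$ and $\calS_{+,\epsilon}$ smooth curves near each of their points.

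Finally I would dispatch the two endpoints separately. At $(\frac{\epsilon^2}{4},0)$ the trivial-branch linearization $-D_uF_\epsilon(k,0)=-\p_x^2-\epsilon\delta+k$ inherits the unique eigenvalue $-\frac{\epsilon^2}{4}$ of the attractive delta-well, so its lowest eigenvalue is $k-\frac{\epsilon^2}{4}$, with eigenfunction $\e^{-(\epsilon/2)|x|}$; this vanishes exactly at $k=\frac{\epsilon^2}{4}$, identifying a simple bifurcation point from $u\equiv0$, consistent with $\Ve u_{-,k,\epsilon}\Ve_{H^1}\to0$ from Theorem~\ref{curve.thm}. At the other end, the explicit formula \eqref{regime2} (together with \eqref{ubar}) gives $u_{\pm,k,\epsilon}\to\ou_\epsilon$ in $H^1$ as $k\to\overline k$, so both curves meet at $(\overline k,\ou_\epsilon)$, where $D_uF_\epsilon$ has just been shown to be singular. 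The main obstacle is the kernel computation itself — in particular justifying that $u'$ spans the decaying half-line solution and correctly translating the $\calD_\epsilon$ jump condition into the displayed kernel relation; the subsequent elimination and the Fredholm/IFT bookkeeping are routine.
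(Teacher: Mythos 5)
Your proof is correct and follows essentially the same route as the paper: reduce non-degeneracy to the triviality of $\ker D_uF_\epsilon(k,u)$, identify any kernel element with a multiple of $|u'_{\pm,k,\epsilon}|$ on each half-line, translate the $\mathcal{D}_\epsilon$ jump condition into the algebraic relation $u^4(0)-2u^2(0)+(k-\frac{\epsilon^2}{4})=0$, and combine it with \eqref{uopm} to conclude that degeneracy occurs only at $k=\frac{\epsilon^2}{4}$ and $k=\overline{k}$ (the paper's Lemma~\ref{nondegen.lem}), finishing with the implicit function theorem and the simple eigenvalue $k-\frac{\epsilon^2}{4}$ of the delta-well linearization at the trivial branch. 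Your parity decomposition of the kernel and your elimination $u^2(0)=2(k-\frac{\epsilon^2}{4})$ are cosmetic variants of the paper's continuity argument and its equation \eqref{pmequ}.
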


\begin{proof}
First, it is easily seen that
\begin{equation*}
\ker D_{u}F_{\epsilon }(\textstyle\frac{\epsilon ^{2}}{4},0)= \vect\big\{%
\mathrm{e}^{-\frac{\epsilon}{2}|x|}\big\},
\end{equation*}
so that zero is a simple eigenvalue of $D_{u}F_{\epsilon }(\frac{\epsilon^{2}%
}{4},0)$. It then follows from standard bifurcation theory that $\mathcal{S}%
_{-,\epsilon }$ bifurcates from $(\frac{\epsilon ^{2}}{4},0)$. More
precisely, the Crandall-Rabinowitz theorem \cite[Theorem~1.7]{cr71} yields
the existence of a unique local continuous curve of solutions bifurcating
from the line of trivial solutions $\{(k,0):k\in \mathbb{R}\}$ in $\mathbb{R}%
\times H^{1}(\mathbb{R})$ at the point $(\frac{\epsilon ^{2}}{4},0)$. Since
our explicit solutions all belong to $H^{1}(\mathbb{R})$, they coincide with
the Crandall-Rabinowitz curve in a neighborhood of $(\frac{\epsilon ^{2}}{4}%
,0)$ in $\mathbb{R}\times H^{1}(\mathbb{R})$.

The smoothness of the curves $\mathcal{S}_{-,\epsilon }$ and $\mathcal{S}%
_{+,\epsilon }$ follows from the implicit function theorem in $\mathbb{R}%
\times H^{1}(\mathbb{R})$, provided that $D_{u}F_{\epsilon }(k,u):H^{1}(%
\mathbb{R})\rightarrow H^{-1}(\mathbb{R})$ is non-singular along the
solution curves, which is given by Lemma~\ref{nondegen.lem} below.
\end{proof}

\begin{remark}
\rm 
Note that the global bifurcation theorem of Rabinowitz \cite[%
Theorem~1.3]{r} could be applied here to deduce the existence of a connected
set of solutions, bifurcating from the point $(\frac{\epsilon^{2}}{4},0)$,
where we have only shown that local bifurcation occurs. But our proof is
more straightforward --- we construct the curve using the explicit solutions
we found in Section~\ref{expsol.sec} --- and provides a smooth curve, which
is stronger than the conclusion of Rabinowitz's theorem. We shall use
Rabinowitz's theorem to infer that the bifurcating curve is unbounded (see
the conclusion of the proof of Theorem~\ref{curve.thm} below). But this can
also be deduced from the explicit form of the solutions (see Eq.~%
\eqref{explicitslopes} below). 
\end{remark}

In view of the more detailed spectral analysis that will be carried out
later, and in order to follow the usual sign convention of the spectral
theory of Schr\"{o}dinger operators, it is convenient to introduce the
self-adjoint operators $T_{\pm ,k,\epsilon}: \mathcal{D}_{\epsilon }\subset
L^{2}(\mathbb{R})\rightarrow L^{2}(\mathbb{R})$,
\begin{equation}
T_{\pm ,k,\epsilon }v:=-D_{u}F_{\epsilon }(k,u_{\pm ,k,\epsilon})v
=-v^{\prime \prime }+kv-[6-5u_{\pm ,k,\epsilon }^{2}(x)]u_{\pm,k,\epsilon
}^{2}(x)v.  \label{linear}
\end{equation}
Formally, one can write
$$
T_{\pm ,k,\epsilon }v=
-v^{\prime \prime }+kv-\ep\delta(x)v-[6-5u_{\pm ,k,\epsilon }^{2}(x)]u_{\pm,k,\epsilon}^{2}(x)v.
$$
Thus, $T_{\pm ,k,\epsilon }$ can be seen as an operator acting
between $H^{1}(\mathbb{R})$ and $H^{-1}(\mathbb{R})$, by interpreting the
right-hand side as a distribution.

The proof of the next lemma relies on the spectral theory of 
$-\frac{\mathrm{d}^{2}}{\mathrm{d}x^{2}}+k: \mathcal{D}_{\epsilon }\subset
L^{2}(\mathbb{R})\rightarrow L^{2}(\mathbb{R})$, which has been formalized in the classic
book \cite{alb}.

\begin{lemma}
\label{nondegen.lem} The linearized operator \eqref{DuF} satisfies:

\begin{itemize}
\item[(i)] $T_{-,k,\epsilon}:H^1(\mathbb{R})\to H^{-1}(\mathbb{R})$ is an
isomorphism for all $k\in\textstyle(\frac{\epsilon^2}{4},\overline{k}%
_\epsilon)$;

\item[(ii)] $T_{+,k,\epsilon}:H^1(\mathbb{R})\to H^{-1}(\mathbb{R})$ is an
isomorphism for all $k\in\textstyle(\frac34,\overline{k}_\epsilon)$;

\item[(iii)] $D_uF_\epsilon(\overline{k}_\epsilon,\overline{u}_\epsilon)$ is
singular with
\begin{equation}  \label{kerspan}
\ker D_uF_\epsilon(\overline{k}_\epsilon,\overline{u}_\epsilon)= \vect%
\{\eta_\epsilon\}, \quad \eta_\epsilon=|\overline{u}_\epsilon^{\prime }|.
\end{equation}
Furthermore, since $\eta_\epsilon>0$, zero is the principal eigenvalue of $%
D_uF_\epsilon(\overline{k}_\epsilon,\overline{u}_\epsilon)$.
\end{itemize}
\end{lemma}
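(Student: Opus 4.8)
The plan is to reduce all three parts to the computation of the $L^2$-kernel of the self-adjoint operator $D_uF_\epsilon(k,u_{\pm,k,\epsilon})$, and to show that this kernel can only be spanned by $|u_{\pm,k,\epsilon}'|$, which lies in the domain $\mathcal{D}_\epsilon$ at exactly the two endpoints $k=\frac{\epsilon^2}{4}$ and $k=\overline{k}$. Throughout, write $u=u_{\pm,k,\epsilon}$ and $V(x)=[6-5u^2]u^2$, so that $T_{\pm,k,\epsilon}=-\partial_{xx}+k-V$. Since $u$ and $u'$ decay exponentially (Section~\ref{expsol.sec}), so does $V$; hence $V$ is a relatively compact perturbation of $-\partial_{xx}+k$ and $\sigma_{\mathrm{ess}}(T_{\pm,k,\epsilon})=[k,\infty)$. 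As $k>0$, the value $0$ lies strictly below the essential spectrum, so $0\in\sigma(T_{\pm,k,\epsilon})$ if and only if $0$ is an eigenvalue. Consequently each of (i) and (ii) amounts to showing $\ker T_{\pm,k,\epsilon}=\{0\}$, after which the isomorphism property $H^1\to H^{-1}$ follows from the standard bounded-form/G{\aa}rding framework for these delta-interaction operators, together with $0$ being in the resolvent.

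First I would identify the decaying solutions of the kernel ODE. Differentiating $u''-ku+2u^3-u^5=0$ on $\mathbb{R}\setminus\{0\}$ (Proposition~\ref{basic.prop}~(i)) shows that $u'$ solves $-w''+kw-Vw=0$ there. Since $u(x)\sim \mathrm{e}^{-\sqrt k|x|}$, the restriction of $u'$ to each half-line is, up to a scalar, the unique solution decaying at the corresponding infinity (the second fundamental solution grows like $\mathrm{e}^{\sqrt k|x|}$). Hence any $v\in\ker D_uF_\epsilon(k,u)\subset H^1(\mathbb{R})$ must equal $a\,u'$ on $(0,\infty)$ and $b\,u'$ on $(-\infty,0)$. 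Continuity of $v$ at $0$ together with $u'(0^\pm)=\mp\frac{\epsilon}{2}u(0)$ (Proposition~\ref{basic.prop}~(iii)) forces $b=-a$; since $u'$ is odd, this means $v$ is even and $v=c\,|u'|$ for some constant $c$. Thus $\ker D_uF_\epsilon(k,u)$ is at most one-dimensional and, if nontrivial, is spanned by $\eta:=|u'|$.

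The decisive step is to test the interface condition on $\eta$. Writing $\eta=-u'$ for $x>0$ and using $u''(0^+)=ku(0)-2u^3(0)+u^5(0)$, an elementary computation gives $\eta(0)=\frac{\epsilon}{2}u(0)$ and $\eta'(0^+)-\eta'(0^-)=-2\big(ku(0)-2u^3(0)+u^5(0)\big)$, so that $\eta\in\mathcal{D}_\epsilon$, i.e. $\eta'(0^+)-\eta'(0^-)=-\epsilon\,\eta(0)$, holds if and only if
\begin{equation*}
u^4(0)-2u^2(0)+\big(k-\tfrac{\epsilon^2}{4}\big)=0.
\end{equation*}
On the other hand, every soliton satisfies $u^4(0)-3u^2(0)+3\big(k-\frac{\epsilon^2}{4}\big)=0$ by \eqref{uopm}. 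Subtracting the two relations yields $u^2(0)=2\big(k-\frac{\epsilon^2}{4}\big)$, and feeding this back gives $\big(k-\frac{\epsilon^2}{4}\big)\big(4(k-\frac{\epsilon^2}{4})-3\big)=0$, i.e. $k=\frac{\epsilon^2}{4}$ or $k=\overline{k}$. Therefore, for $k$ in the open ranges of (i) and (ii) the candidate $\eta=|u'|$ violates the interface condition, forcing $c=0$ and $\ker T_{\pm,k,\epsilon}=\{0\}$; this proves (i) and (ii). At $k=\overline{k}$ the interface condition does hold, so $\eta_\epsilon=|\overline{u}_\epsilon'|\in\mathcal{D}_\epsilon$ spans the one-dimensional kernel, giving \eqref{kerspan} in (iii). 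Finally, since $\overline{u}_\epsilon$ is even and strictly decreasing on $(0,\infty)$ with $u'(0^+)\neq0$, one has $\eta_\epsilon>0$ on all of $\mathbb{R}$; a strictly positive eigenfunction of the Schr\"odinger operator $T_{+,\overline{k},\epsilon}$ must be its ground state, so $0=\min\sigma(T_{+,\overline{k},\epsilon})$, which is exactly the statement that $0$ is the principal eigenvalue of $D_uF_\epsilon(\overline{k},\overline{u}_\epsilon)=-T_{+,\overline{k},\epsilon}$.

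The part I expect to require the most care is the reduction in the second paragraph: one must justify rigorously that on each half-line the space of decaying solutions of the kernel ODE is exactly one-dimensional and spanned by $u'$. This rests on the limit-point nature of the problem at $\pm\infty$ (guaranteed by $k>0$ and the exponential decay of $V$) and on the regularity $u\in C^2(\mathbb{R}\setminus\{0\})$ from Proposition~\ref{basic.prop}. Once this dichotomy is in place, the remaining content is the interface computation above, whose only subtlety is bookkeeping the sign of $u'$ across $x=0$; the passage from trivial kernel to isomorphism, and the ground-state characterization in (iii), are then standard.
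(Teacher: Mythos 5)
Your proposal is correct and follows essentially the same route as the paper's proof: reduce (i)--(iii) to computing $\ker T_{\pm,k,\epsilon}$, identify any kernel element as a multiple of $|u_{\pm,k,\epsilon}'|$ (the paper cites Theorem~3.3 of \cite{bs} where you spell out the decaying/growing dichotomy on each half-line), and then test the jump condition at $x=0$ against \eqref{uopm} to conclude that the kernel is nontrivial only at $k=\frac{\epsilon^2}{4}$ or $k=\overline{k}$. Your algebraic path (subtracting the two polynomial relations in $u^2(0)$) is slightly different from the paper's, which reaches \eqref{pmequ} and thereby also distinguishes which branch degenerates at which endpoint, but both yield the same conclusion and your version suffices for the lemma.
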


\begin{proof}
A first important remark is that each operator $T_{\pm ,k,\epsilon }$ is a
compact perturbation of $-\frac{\mathrm{d}^{2}}{\mathrm{d}x^{2}}+k: H^{1}(%
\mathbb{R})\rightarrow H^{-1}(\mathbb{R})$, the latter being an isomorphism
for all $k>0$. It then follows from standard spectral theory (see \cite{kato,s,alb}) 
that the spectrum of $T_{\pm ,k,\epsilon }:\mathcal{D}_{\epsilon }\subset
L^{2}(\mathbb{R})\rightarrow L^{2}(\mathbb{R})$ consists of a
finite number of isolated simple eigenvalues lying below a
continuous part $[k,\infty )$. Furthermore, $T_{\pm ,k,\epsilon }:H^{1}(%
\mathbb{R})\rightarrow H^{-1}(\mathbb{R})$ is an isomorphism if and only if
\begin{equation}
\ker T_{\pm ,k,\epsilon }=\{0\}.  \label{ker}
\end{equation}
We will now show that \eqref{ker} holds for all $\frac{\epsilon ^{2}}{4}<k<%
\overline{k}_\epsilon$. If $T_{\pm ,k,\epsilon }v=0$ then $v\in H^{1}(%
\mathbb{R})\cap H^{2}(\mathbb{R}\setminus \{0\})$ satisfies
\begin{gather}
-v^{\prime \prime }+kv-[6-5u_{\pm ,k,\epsilon }^{2}(x)]u_{\pm ,k,\epsilon
}^{2}(x)v=0,\quad x\neq 0,  \label{kereq} \\
v^{\prime }(0^+)-v^{\prime }(0^-)=-\epsilon v(0).  \label{ker0}
\end{gather}
Applying Theorem~3.3 of \cite{bs} to \eqref{kereq} separately on $(-\infty,0)
$ and $(0,\infty )$, and using the continuity of $v$, there exists a
constant $\alpha \in \mathbb{R}$ such that $v=\alpha |u_{\pm
,k,\epsilon}^{\prime}|$. Hence,
\begin{equation*}
v(0)=-\alpha u_{\pm ,k,\epsilon }^{\prime }(0^-)= \alpha u_{\pm
,k,\epsilon}^{\prime }(0^+)=-\alpha \frac{\epsilon}{2}u_{\pm,k,\epsilon }(0),
\end{equation*}
and since
\begin{equation*}
u_{\pm ,k,\epsilon }^{\prime \prime }(0^-)=u_{\pm ,k,\epsilon }^{\prime
\prime}(0^+) =ku_{\pm ,k,\epsilon }(0)-2u^3_{\pm ,k,\epsilon }(0)+u^5_{\pm
,k,\epsilon}(0),
\end{equation*}
it follows from \eqref{ker0} that
\begin{equation*}
4[k-2u_{\pm ,k,\epsilon }^{2}(0)+u_{\pm ,k,\epsilon }^{4}(0)]=\epsilon ^{2}.
\end{equation*}
Combining this with \eqref{uopm} yields
\begin{equation}
1-\frac{4}{3}\left( k-\frac{\epsilon ^{2}}{4}\right) =\pm \sqrt{1-\frac{4}{3}
\left( k-\frac{\epsilon ^{2}}{4}\right) }.  \label{pmequ}
\end{equation}
The `$+$' sign in \eqref{pmequ} corresponds to $u_{-,k,\epsilon }$ and
yields $k=\frac{\epsilon ^{2}}{4}$ or $k=\frac{3}{4}+\frac{\epsilon ^{2}}{4}$
, from which (i) and (iii) follow. The `$-$' sign in \eqref{pmequ}
corresponds to $u_{+,k,\epsilon }$ and yields $k=\frac{3}{4}+\frac{%
\epsilon^{2}}{4}$, so (ii) must hold. The lemma is proved.
\end{proof}

Even though the linearized operator $D_uF_\epsilon(k,u)$ becomes singular at
$(\overline{k}_\epsilon,\overline{u}_\epsilon)$, we have the following
result.

\begin{proposition}
\label{fold.prop} The set
\begin{equation}  \label{wholecurve}
\mathcal{S}:= \mathcal{S}_{-,\epsilon}\cup\{(\overline{k}_\epsilon,\overline{%
u}_\epsilon)\}\cup\mathcal{S}_{+,\epsilon}
\end{equation}
is a smooth curve in $\mathbb{R}\times H^1(\mathbb{R})$.
\end{proposition}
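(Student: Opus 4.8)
The plan is to show that the two smooth curves $\mathcal{S}_{-,\epsilon}$ and $\mathcal{S}_{+,\epsilon}$, which meet at the fold point $(\overline{k},\overline{u}_\epsilon)$ where the linearization degenerates, can be glued together into a single smooth curve despite the breakdown of the parametrization by $k$. Since the parametrization by $k$ fails precisely at $\overline{k}$ (where $\ker D_uF_\epsilon(\overline{k},\overline{u}_\epsilon)=\vect\{\eta_\epsilon\}$ is one-dimensional by Lemma~\ref{nondegen.lem}(iii)), the natural framework is the theorem of Crandall and Rabinowitz on turning points, \cite{cr73}, as the authors signal in the Introduction. The idea is that at a simple fold the solution set is still a smooth one-dimensional manifold; one must simply choose a good parameter other than $k$.

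First I would verify the hypotheses of the turning-point result of \cite{cr73} at $(\overline{k},\overline{u}_\epsilon)$. The relevant structure is: $N:=\ker D_uF_\epsilon(\overline{k},\overline{u}_\epsilon)=\vect\{\eta_\epsilon\}$ is one-dimensional, and since $D_uF_\epsilon(\overline{k},\overline{u}_\epsilon)$ is a compact perturbation of the isomorphism $-\tfrac{\dif^2}{\dif x^2}+\overline{k}$ (as established in the proof of Lemma~\ref{nondegen.lem}), it is a Fredholm operator of index zero, so its range $R$ has codimension one. The transversality condition to check is that $\partial_k F_\epsilon(\overline{k},\overline{u}_\epsilon)\notin R$; equivalently, writing $\ell$ for a nonzero functional annihilating $R$, one needs $\la \ell, \partial_k F_\epsilon(\overline{k},\overline{u}_\epsilon)\ra\neq 0$. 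Here $\partial_k F_\epsilon(k,u)=-u$, so this amounts to $\la\ell, \overline{u}_\epsilon\ra\neq 0$. Because the operator is self-adjoint in $L^2(\mathbb{R})$, one may take $\ell=\eta_\epsilon$, reducing the condition to $\int_\mathbb{R}\eta_\epsilon\,\overline{u}_\epsilon\,\dif x\neq 0$; and since $\eta_\epsilon=|\overline{u}_\epsilon'|\geqslant 0$ (not identically zero) while $\overline{u}_\epsilon>0$, this integral is strictly positive. Thus the fold is nondegenerate.

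With the transversality verified, the turning-point theorem of \cite{cr73} gives, in a neighborhood of $(\overline{k},\overline{u}_\epsilon)$ in $\mathbb{R}\times H^1(\mathbb{R})$, a smooth curve $s\mapsto (k(s),u(s))$ with $(k(0),u(0))=(\overline{k},\overline{u}_\epsilon)$, $u'(0)$ spanning $N$, and $k'(0)=0$, $k''(0)\neq 0$, so that $k$ has a genuine quadratic turning point there; moreover this local curve exhausts all solutions of \eqref{functequ} near the fold. Away from the fold, Proposition~\ref{curves.prop} and Lemma~\ref{nondegen.lem}(i)--(ii) already show that $\mathcal{S}_{-,\epsilon}$ and $\mathcal{S}_{+,\epsilon}$ are smooth curves of nondegenerate solutions parametrized smoothly by $k$. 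The remaining step is to match these two descriptions: the local turning-point curve must coincide with the explicit solution branches on either side of $\overline{k}$. This follows from the local uniqueness statement in \cite{cr73} together with the fact that, for $k$ slightly below $\overline{k}$, the explicit solutions $u_{-,k,\epsilon}$ and $u_{+,k,\epsilon}$ are the only positive solutions (from regime~(B) in Section~\ref{expsol.sec}) and both converge in $H^1(\mathbb{R})$ to $\overline{u}_\epsilon$ as $k\nearrow\overline{k}$. Hence the local curve picks out, for $s<0$ say, the branch $u_{-,k(s),\epsilon}$ and, for $s>0$, the branch $u_{+,k(s),\epsilon}$, and the union $\mathcal{S}$ in \eqref{wholecurve} is exactly the image of a single smooth parametrization.

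The main obstacle, and the point requiring the most care, is the matching step: confirming that the abstract Crandall--Rabinowitz curve agrees with the concrete branches $\mathcal{S}_{\pm,\epsilon}$ rather than describing some spurious family. This rests on the local uniqueness of solutions near the fold, which in turn relies on the continuity of the branches up to $\overline{k}$ — namely that $\|u_{\pm,k,\epsilon}-\overline{u}_\epsilon\|_{H^1}\to 0$ as $k\nearrow\overline{k}$. I would establish this convergence directly from the explicit formulas \eqref{regime2} and \eqref{ubar}, using the uniform exponential decay already noted after \eqref{sepvar} to upgrade pointwise convergence of the profiles to convergence in $H^1(\mathbb{R})$; the $H^1$ topology here is exactly what the turning-point theorem needs, so one must take care that the convergence is in the correct function space and not merely pointwise. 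Once this is in place, the smoothness of $\mathcal{S}$ as a whole is immediate from the smoothness of the local turning-point parametrization and its smooth continuation into $\mathcal{S}_{\pm,\epsilon}$ given by the implicit function theorem off the fold.
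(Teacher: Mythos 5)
Your proposal is correct and follows essentially the same route as the paper: verify the hypotheses of the Crandall--Rabinowitz turning-point theorem at $(\overline{k},\overline{u}_\epsilon)$ (one-dimensional kernel, codimension-one range via self-adjointness, and transversality $\int_{\mathbb{R}}\overline{u}_\epsilon\eta_\epsilon\,\mathrm{d}x\neq 0$), then identify the resulting local curve with the explicit branches $\mathcal{S}_{\pm,\epsilon}$ by local uniqueness. The only caveat is your assertion that $k''(0)\neq 0$, which is not part of the cited theorem's conclusion and would require a separate second-order computation, but it is not needed for the smoothness of $\mathcal{S}$.
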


To prove Proposition~\ref{fold.prop} we will use a theorem of Crandall and
Rabinowitz, which enables us to reparametrize the bifurcation curve around
the point $(\overline{k}_\epsilon,\overline{u}_{\epsilon })$, where the
parametrization by $k$ breaks down. For the reader's convenience we
reproduce this result here.

\begin{theorem}[Theorem~3.2 of \protect\cite{cr73}]
\label{cranrab.thm} Let $(k_{0},u_{0})\in \mathbb{R}\times X$ where $X$ is a
Banach space and let $F$ be a continuously differentiable mapping of an open
neighborhood of $(k_{0},u_{0})$ into another Banach space $Y$. Suppose that $%
\ker D_{u}F(k_{0},u_{0})=\vect\{\eta _{0}\}$ is one-dimensional, that $\codim%
\rge D_{u}F(k_{0},u_{0})=1$, and that $D_{k}F(k_{0},u_{0})\not\in \rge %
D_{u}F(k_{0},u_{0})$. If $Z$ is a complement of $\vect\{\eta _{0}\}$ in $X$,
then the solutions of $F(k,u)=F(k_{0},u_{0})$ near $(k_{0},u_{0})$ form a
curve $(k(s),u(s))=(k_{0}+\tau (s),u_{0}+s\eta _{0}+z(s))$, where $s\mapsto
(\tau(s),z(s))\in \mathbb{R}\times Z$ is a continuously differentiable
function near $s=0$, and $\tau (0)=\dot{\tau}(0)=0,\ z(0)=\dot{z}(0)=0$.
\end{theorem}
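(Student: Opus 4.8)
The plan is to recognize Theorem~\ref{cranrab.thm} as a disguised application of the implicit function theorem, the twist being that the parameter $k$ must be promoted to one of the unknowns we solve for. This is legitimate precisely because the transversality hypothesis $D_kF(k_0,u_0)\notin\rge D_uF(k_0,u_0)$ makes the $k$-direction supply exactly the one missing dimension in the range of $D_uF(k_0,u_0)$.

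First I would normalize and fix the functional framework. Writing $L:=D_uF(k_0,u_0)$ and passing to $G(\kappa,w):=F(k_0+\kappa,u_0+w)-F(k_0,u_0)$, it suffices to solve $G(\kappa,w)=0$ near $(\kappa,w)=(0,0)$, where $G(0,0)=0$ and $D_wG(0,0)=L$. Since $L$ is a bounded operator with $\codim\rge L=1$, its range is closed (a standard consequence of the open mapping theorem), so $\rge L$ is a closed hyperplane, and the hypothesis $D_kF(k_0,u_0)\notin\rge L$ furnishes the topological direct sum $Y=\rge L\oplus\vect\{D_kF(k_0,u_0)\}$ with a bounded projection. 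On the domain side I use the given splitting $X=\vect\{\eta_0\}\oplus Z$ and record the key fact that $L|_Z:Z\to\rge L$ is a bounded isomorphism: it is injective because $\ker L\cap Z=\{0\}$, and surjective because $Z$ complements $\ker L=\vect\{\eta_0\}$.

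The heart of the argument is to introduce $\Phi(s,\kappa,z):=G(\kappa,s\eta_0+z)$, a $C^1$ map of a neighborhood of the origin in $\mathbb{R}\times(\mathbb{R}\times Z)$ into $Y$, with $\Phi(0,0,0)=0$. Its partial derivative in the unknowns $(\kappa,z)$ at the origin is the bounded linear map $(h,v)\mapsto h\,D_kF(k_0,u_0)+Lv$, and I would check that this is an isomorphism of $\mathbb{R}\times Z$ onto $Y$. Injectivity: $h\,D_kF(k_0,u_0)+Lv=0$ forces $h\,D_kF(k_0,u_0)\in\rge L$, whence $h=0$ by transversality and then $v\in\ker L\cap Z=\{0\}$. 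Surjectivity: decompose any $y\in Y$ along $Y=\rge L\oplus\vect\{D_kF(k_0,u_0)\}$ and invert the $\rge L$-part through $L|_Z$. The implicit function theorem then yields unique $C^1$ functions $s\mapsto\tau(s)\in\mathbb{R}$ and $s\mapsto z(s)\in Z$, defined near $s=0$, with $\tau(0)=0$, $z(0)=0$ and $\Phi(s,\tau(s),z(s))\equiv0$; undoing the normalization gives the curve $(k(s),u(s))=(k_0+\tau(s),u_0+s\eta_0+z(s))$. Because any solution of $F(k,u)=F(k_0,u_0)$ near $(k_0,u_0)$ decomposes uniquely as $u-u_0=s\eta_0+z$ through the splitting of $X$, the local uniqueness in the implicit function theorem guarantees that these are \emph{all} nearby solutions, so they genuinely form a curve.

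Finally, to obtain $\dot\tau(0)=\dot z(0)=0$, I would differentiate the identity $\Phi(s,\tau(s),z(s))\equiv0$ in $s$ at $s=0$. Since $D_s\Phi(0,0,0)=L\eta_0=0$ (as $\eta_0\in\ker L$), this gives $\dot\tau(0)\,D_kF(k_0,u_0)+L\,\dot z(0)=0$; as $L\,\dot z(0)\in\rge L$ while $D_kF(k_0,u_0)\notin\rge L$, transversality forces $\dot\tau(0)=0$, and then $L\,\dot z(0)=0$ yields $\dot z(0)\in\ker L\cap Z=\{0\}$. The only genuinely technical point is the existence of a bounded complement of $\rge L$, i.e.\ the closedness of the range; this is where the finite-codimension hypothesis enters, and in the present paper it is in any case automatic since $D_uF_\epsilon$ is a compact perturbation of an isomorphism, hence Fredholm. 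Beyond that, the entire proof is the clean implicit-function-theorem computation above, with the transversality condition doing all the real work.
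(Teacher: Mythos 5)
Your proof is correct and takes essentially the same route as the paper: the paper's entire proof is the one-line instruction to apply the implicit function theorem to $f(s,\tau,z)=F(k_0+\tau,u_0+s\eta_0+z)$ at $(s,\tau,z)=(0,0,0)$, solving for $(\tau,z)$ as functions of $s$, which is exactly your map $\Phi$ up to subtracting the constant $F(k_0,u_0)$. You have merely filled in the details the paper leaves implicit --- closedness of $\rge L$ via the open mapping theorem, the isomorphism property of $(h,v)\mapsto h\,D_{k}F(k_{0},u_{0})+Lv$ on $\mathbb{R}\times Z$, local uniqueness, and the differentiation at $s=0$ yielding $\dot{\tau}(0)=\dot{z}(0)=0$ --- and all of these verifications are sound.
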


Here, the `dot' denotes differentiation with respect to $s$.

\begin{proof}
Apply the implicit function theorem to the function $f:\mathbb{R}\times%
\mathbb{R}\times Z \to Y$ defined by
\begin{equation*}
f(s,\tau,z)=F(k_0+\tau,u_0+s\eta_0+z)
\end{equation*}
at the point $(s,\tau,z)=(0,0,0)$.
\end{proof}

\medskip

\begin{proofof}
\emph{Proposition~\ref{fold.prop}.} Firstly, since the operator $%
D_{u}F_{\epsilon }(\overline{k}_\epsilon,\overline{u}_{\epsilon })$ is
self-adjoint, it follows from \eqref{kerspan} that
\begin{equation*}
\codim\rge D_{u}F_{\epsilon }(\overline{k}_\epsilon,\overline{u}_{\epsilon
})=\dim \ker D_{u}F_{\epsilon }(\overline{k}_\epsilon,\overline{u}_{\epsilon
})=1.
\end{equation*}
Furthermore, the range of $D_{u}F_{\epsilon }(\overline{k}_\epsilon,%
\overline{u}_{\epsilon })$ is characterized by
\begin{equation*}
\rge D_{u}F_{\epsilon }(\overline{k}_\epsilon,\overline{u}_{\epsilon })=%
\Big\{v\in L^{2}(\mathbb{R}):\int_{\mathbb{R}}v\eta _{\epsilon }\,\mathrm{d}%
x=0\Big\}.
\end{equation*}
Next, we need to check that $D_{k}F_{\epsilon }(\overline{k}_\epsilon,%
\overline{u}_{\epsilon }) \not\in \rge D_{u}F_{\epsilon }(\overline{k}%
_\epsilon,\overline{u}_{\epsilon })$. But this is clear, as $%
D_{k}F_{\epsilon }(\overline{k}_\epsilon,\overline{u}_{\epsilon }) =-%
\overline{u}_{\epsilon }$ and
\begin{equation*}
\int_{\mathbb{R}}\overline{u}_{\epsilon }\eta _{\epsilon }\,\mathrm{d}x
=2\int_{0}^{\infty }\overline{u}_{\epsilon }\,\overline{u}%
_{\epsilon}^{\prime }\,\mathrm{d}x =-\overline{u}_{\epsilon }^{2}(0)<0.
\end{equation*}
It then follows from Theorem~\ref{cranrab.thm} that the solutions of %
\eqref{functequ} in a neighborhood of $(\overline{k}_\epsilon,\overline{u}%
_{\epsilon})$ form a smooth curve,
\begin{equation}
\{(k_{s},u_{s}):s\in (-\varepsilon ,\varepsilon )\}\subset \mathbb{R}\times
H^{1}(\mathbb{R})\quad (\text{for some}\ \varepsilon >0)  \label{foldcurve}
\end{equation}
such that, at $s=0$,
\begin{equation}
k_{0}=\overline{k}_\epsilon,\ \dot{k}_{0}=0,\quad u_{0}=\overline{u}%
_{\epsilon },\ \dot{u}_{0}=\eta _{\epsilon }.  \label{sderiv}
\end{equation}
Consequently, the lower and upper curves $\mathcal{S}_{-,\epsilon }$ and $%
\mathcal{S}_{+,\epsilon }$ meet smoothly at the turning point $(\overline{k}%
_\epsilon,\overline{u}_{\epsilon })$.
\end{proofof}

We can now end this section with the

\medskip

\begin{proofof}
\emph{Theorem~\ref{curve.thm}.} In view of Propositions~\ref{curves.prop}
and \ref{fold.prop}, we only need to establish the asymptotic behavior of
the upper curve as $k\searrow \frac{3}{4}$ to complete the proof of the
theorem. But this readily follows from Rabinowitz's global bifurcation
theorem \cite[Theorem~1.3]{r}. Indeed, this result states the following
alternative: either (i) the bifurcating curve meets the trivial line $%
\{(k,0) : k\in\mathbb{R}\}\subset\mathbb{R}\times H^1(\mathbb{R})$ again at
a point $(k^*,0)$ with $k^*\neq \frac{\epsilon^2}{4}$, or (ii) it is
unbounded in $\mathbb{R}\times H^1(\mathbb{R})$. In the present context,
case (i) is ruled out by the explicit form of the solutions given in Section~%
\ref{expsol.sec}. Therefore, $\mathcal{S}$ is unbounded in $\mathbb{R}\times
H^{1}(\mathbb{R})$, and so we must have
\begin{equation*}
\lim_{k\searrow 3/4}\Vert u_{+,k,\epsilon }\Vert _{H^{1}}=\infty .
\end{equation*}
Moreover, by \eqref{1stint} and \eqref{uopm}, there exists a constant $C>0$
(independent of $k$) such that
\begin{equation*}
u_{+,k,\epsilon }^{2}(x)\geqslant C(u_{+,k,\epsilon}^{\prime})^2(x),\quad
\text{for all} \ x\neq 0,\ k\in \textstyle(\frac{3}{4},\overline{k}%
_\epsilon),
\end{equation*}
which implies that $\lim_{k\searrow 3/4}\Vert u_{+,k,\epsilon
}\Vert_{H^{1}}=\infty $ if and only if
\begin{equation}
\lim_{k\searrow 3/4}\Vert u_{+,k,\epsilon }\Vert _{L^{2}}=\infty ,
\label{L2blowup}
\end{equation}
and concludes the proof of Theorem~\ref{curve.thm}.
\end{proofof}


\section{Spectral properties}

\label{morse.sec}

The purpose of this section is to prove the following spectral result, 
which is a first step towards our stability theorem.
Let $n(T_{\pm ,k,\epsilon})$ denote the number of negative eigenvalues of
the self-adjoint operator $T_{\pm ,k,\epsilon }$. 

\begin{proposition}
\label{morse.prop} The spectrum of the linear operator $T_{\pm,k,\epsilon}:%
\mathcal{D}_\epsilon\subset L^2(\mathbb{R})\to L^2(\mathbb{R})$ consists of
a finite number of simple isolated eigenvalues and a continuous part $%
[k,\infty)$. Furthermore,
\begin{equation}  \label{morse-}
n(T_{-,k,\epsilon})=1 \quad \text{for all} \ k\in\textstyle(\frac{\epsilon^2%
}{4},\overline{k}_\epsilon)
\end{equation}
and
\begin{equation}  \label{morse+}
n(T_{+,k,\epsilon})=0 \quad \text{for all} \ k\in\textstyle(\frac{3}{4},%
\overline{k}_\epsilon).
\end{equation}
\end{proposition}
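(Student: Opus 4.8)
The plan is to reduce everything to the sign of the lowest eigenvalue near the turning point. The qualitative description of the spectrum is already contained in the proof of Lemma~\ref{nondegen.lem}: each $T_{\pm,k,\epsilon}$ is a compact perturbation of $-\frac{\mathrm{d}^2}{\mathrm{d}x^2}+k$, so its spectrum consists of a finite set of isolated eigenvalues of finite multiplicity lying below the essential spectrum $[k,\infty)$. That these eigenvalues are simple is a one-dimensional fact: an $L^2$ eigenfunction is the unique (up to scalar) decaying solution of a second-order ODE on each of $(-\infty,0)$ and $(0,\infty)$, and the two pieces are tied together by continuity and the jump condition defining $\mathcal{D}_\epsilon$; a Wronskian argument then shows the eigenspace is one-dimensional.

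\emph{Constancy of the Morse index.} The eigenvalues below $[k,\infty)$ depend continuously on $k$ along each branch, since the potential $[6-5u_{\pm,k,\epsilon}^2]u_{\pm,k,\epsilon}^2$ varies smoothly in $k$ with uniform exponential decay (by Section~\ref{expsol.sec} and Proposition~\ref{curves.prop}). By Lemma~\ref{nondegen.lem}(i)--(ii), $0$ is never an eigenvalue of $T_{-,k,\epsilon}$ for $k\in(\frac{\epsilon^2}{4},\overline{k})$ nor of $T_{+,k,\epsilon}$ for $k\in(\frac{3}{4},\overline{k})$; hence no eigenvalue can cross $0$, and $n(T_{\pm,k,\epsilon})$ is constant on each interval. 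It therefore suffices to compute it near the fold.

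\emph{Value at the fold.} By Lemma~\ref{nondegen.lem}(iii), $T_{\pm,\overline{k},\epsilon}$ has one-dimensional kernel $\vect\{\eta_\epsilon\}$ with $\eta_\epsilon=|\overline{u}_\epsilon'|>0$; a positive kernel element must be the principal eigenfunction, so $0$ is the \emph{lowest} eigenvalue, it is simple, and all other eigenvalues are strictly positive. Thus $n(T_{\pm,\overline{k},\epsilon})=0$, and near the fold the only eigenvalue that can change sign is the lowest one, $\mu_0$. Parametrizing the branch by the Crandall--Rabinowitz variable $s$ (Proposition~\ref{fold.prop}), differentiating $F_\epsilon(k_s,u_s)=0$ twice and pairing with $\eta_\epsilon$, first-order perturbation theory yields
\begin{equation*}
\dot\mu_0(0)=-\,\ddot{k}_0\,\frac{\langle\eta_\epsilon,\overline{u}_\epsilon\rangle}{\|\eta_\epsilon\|_{L^2}^2},\qquad \langle\eta_\epsilon,\overline{u}_\epsilon\rangle=\int_{\mathbb{R}}|\overline{u}_\epsilon'|\,\overline{u}_\epsilon\,\mathrm{d}x>0.
\end{equation*}
Since the branch turns back at $\overline{k}$, this value is a strict local maximum of $k_s$, and the fold is non-degenerate by the condition $D_kF_\epsilon\notin\rge D_uF_\epsilon$ checked in Proposition~\ref{fold.prop}; hence $\ddot{k}_0<0$ and $\dot\mu_0(0)>0$. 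As $u_s(0)$ is increasing in $s$ (because $\eta_\epsilon(0)>0$), the value $s<0$ corresponds to the lower branch and $s>0$ to the upper one. Therefore $\mu_0<0$ on the lower branch and $\mu_0>0$ on the upper branch near the fold, giving $n(T_{-,k,\epsilon})=1$ and $n(T_{+,k,\epsilon})=0$; constancy then propagates these values to the whole intervals.

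\emph{Main obstacle.} The crux is the crossing direction, i.e. the sign of $\dot\mu_0(0)$, which hinges on the non-degeneracy of the fold ($\ddot{k}_0<0$) and on $\langle\eta_\epsilon,\overline{u}_\epsilon\rangle\neq0$. As an independent anchor on the lower branch, testing against the soliton itself---which lies in $\mathcal{D}_\epsilon$ and solves \eqref{solequ}, so that $T_{-,k,\epsilon}u_{-,k,\epsilon}=4u_{-,k,\epsilon}^3(u_{-,k,\epsilon}^2-1)$---gives
\begin{equation*}
\langle u_{-,k,\epsilon},T_{-,k,\epsilon}u_{-,k,\epsilon}\rangle=4\int_{\mathbb{R}}\big(u_{-,k,\epsilon}^6-u_{-,k,\epsilon}^4\big)\,\mathrm{d}x<0
\end{equation*}
for $k$ near $\frac{\epsilon^2}{4}$, where $u_{-,k,\epsilon}<1$; this exhibits a negative eigenvalue directly and confirms $n(T_{-,k,\epsilon})\geqslant1$, in agreement with the fold analysis. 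No analogous test function yields positivity on the upper branch, so the transversal crossing at the fold is essential there; alternatively, strict positivity of $T_{+,k,\epsilon}$ can be verified from the explicit formula \eqref{regime2} with the aid of symbolic computation.
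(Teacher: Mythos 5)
Your overall architecture differs from the paper's in an interesting way: the paper proves \eqref{morse-} by a holomorphic perturbation in $\epsilon$ from the free-space soliton (Sturm oscillation at $\epsilon=0$, then $\dot\lambda_0>0$), and only uses the fold crossing to deduce \eqref{morse+} from \eqref{morse-}; you instead derive \emph{both} indices from a single transversal crossing of the lowest eigenvalue at $(\overline{k},\overline{u}_\epsilon)$, anchored by the positivity of $\eta_\epsilon$ and propagated by the no-crossing statements of Lemma~\ref{nondegen.lem}. That structure is sound, and your identification of $s<0$ with the lower branch via $\eta_\epsilon(0)>0$ is correct. Your formula $\dot\mu_0(0)=-\ddot{k}_0\langle\eta_\epsilon,\overline{u}_\epsilon\rangle/\|\eta_\epsilon\|_{L^2}^2$ is also correct: pairing the second $s$-derivative of $F_\epsilon(k_s,u_s)=0$ with $\eta_\epsilon$ shows it is equivalent to the paper's \eqref{mudot}.

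The gap is precisely the claim $\ddot{k}_0<0$. Neither of the reasons you give delivers the strict inequality: that $\overline{k}$ is a (strict) local maximum of $k_s$ only yields $\ddot{k}_0\leqslant 0$ (consider $k_s=\overline{k}-s^4$), and the Crandall--Rabinowitz transversality condition $D_kF_\epsilon\notin\rge D_uF_\epsilon$ guarantees only the reparametrization with $\dot{k}_0=0$, not $\ddot{k}_0\neq 0$. In fact $\ddot{k}_0=-4f(\epsilon)/\int_{\mathbb{R}}\overline{u}_\epsilon\eta_\epsilon\diff x$ with $f(\epsilon)=\int_{\mathbb{R}}[5\overline{u}_\epsilon^2-3]\overline{u}_\epsilon\eta_\epsilon^3\diff x$, so your assertion is \emph{equivalent} to the non-vanishing of exactly the integral that the paper states it could not control analytically and verifies only numerically (Fig.~\ref{fig1a}); you cannot get it for free from the geometry of the fold. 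The gap is fillable, and in a way that would improve on the paper: from \eqref{uopm} one has the exact relation $k=\overline{k}-\tfrac13\big(u^2(0)-\tfrac32\big)^2$, and since $u_s(0)=\overline{u}_\epsilon(0)+s\,\eta_\epsilon(0)+o(s)$ with $\eta_\epsilon(0)=\tfrac{\epsilon}{2}\overline{u}_\epsilon(0)>0$, this gives $\ddot{k}_0=-\tfrac{3\epsilon^2}{2}<0$ analytically. As written, however, the decisive step is unsupported, and the closing remark that positivity of $T_{+,k,\epsilon}$ ``can be verified by symbolic computation'' is not a proof. (Your secondary test $\langle u,T_{-,k,\epsilon}u\rangle<0$ near the bifurcation point is correct but only shows $n(T_{-,k,\epsilon})\geqslant 1$ there.)
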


\begin{proof}
As noted earlier in the proof of Lemma~\ref{nondegen.lem}, the
basic structure of the spectrum of $T_{\pm ,k,\epsilon }$ follows from
standard spectral theory, see for instance \cite{kato,s,alb}. For the simplicity
of eigenvalues, suppose that $u,v\in \mathcal{D}_{\epsilon }$ are
eigenfunctions of $T_{\pm ,k,\epsilon }$ corresponding to an eigenvalue $%
\lambda <k$. Then $(uv^{\prime }-u^{\prime }v)^{\prime }=uv^{\prime \prime
}-u^{\prime \prime }v=0$ on $\mathbb{R}\setminus \{0\}$, therefore there
exists a constant $C\in \mathbb{R}$ such that $uv^{\prime }-u^{\prime }v=C$
on $\mathbb{R}\setminus \{0\}$. However, $\lim_{|x|\rightarrow \infty
}uv^{\prime }-u^{\prime }v=0$, as $u,v\in H^{2}(\mathbb{R}\setminus \{0\})$.
Hence $C=0$ and $u,v$ are linearly dependent.

Next, \eqref{morse-} follows from a perturbation analysis similar to the
proof of Lemma~12 in \cite{lc}. The idea is first to observe that the
property holds when $\epsilon =0$. Indeed, in this case the kernel of the
linearization at the free-space soliton \eqref{freespacesol} is spanned by
its derivative $u_{-,k,0}^{\prime }$, which has a unique zero at $x=0$,
where it changes sign. Therefore, by the Sturm's oscillation theorem, zero
is the second eigenvalue of the linearization, and so $n(T_{-,k,0})=1$. With
this information at hand, we then use perturbation theory to make sure that
the second eigenvalue becomes positive for small values of $\epsilon >0$. A
first step in this direction is the smoothness of the family of operators $%
T_{-,k,\epsilon }$ at $\epsilon =0$.

\begin{lemma}
\label{holomorphy.lem} For any $\rho>0$ small enough, there exists a open
connected neighborhood $\Omega $ of the real-line segment $[0,2\sqrt{k}-\rho
]$ in $\mathbb{C}$, such that $\{T_{-,k,\epsilon }:\epsilon \in \Omega \}$
is a holomorphic family of operators.
\end{lemma}

\begin{proof}
The result follows in a similar way to Lemma~13 in \cite{lc}, and is based
on the notion of holomorphic family of unbounded operators of type (B) in
the sense of Kato, see Theorem~4.2 in chapter VII of \cite{kato}. The
argument boils down to checking that, for each fixed $x\in \mathbb{R}$, the
mapping $\epsilon \mapsto u_{-,k,\epsilon }(x)$ is holomorphic on a suitable
domain, independent of $x$. That this domain can be taken as stated in Lemma~%
\ref{holomorphy.lem} follows by a careful inspection of \eqref{expinnedsol},
and the observation that the function $\epsilon \mapsto \sqrt{%
\epsilon^{2}+(4k-\epsilon ^{2})(1-4k/3)}$ is holomorphic on the strip $%
\{\epsilon\in \mathbb{C}:\im\epsilon \in (-\sqrt{3-4k},\sqrt{3-4k})\}$.
\end{proof}

Thanks to Lemma~\ref{holomorphy.lem}, standard perturbation theory \cite%
{kato} yields two holomorphic mappings,
\begin{equation*}
\Omega \ni \epsilon \mapsto \lambda _{\epsilon }\in \mathbb{R},\quad \Omega
\ni \epsilon \mapsto w_{\epsilon }\in \mathcal{D}_{\epsilon }
\end{equation*}
such that $\lambda _{0}=0$, $w_{0}=u_{-,k,0}^{\prime }$, where $%
\lambda_{\epsilon }$ and $w_{\epsilon }$ are, respectively, the second
eigenvalue and eigenvector of $T_{-,k,\epsilon }$:
\begin{equation}
T_{-,k,\epsilon }w_{\epsilon }=\lambda _{\epsilon }w_{\epsilon },\quad
\epsilon \in \Omega .  \label{secondeigen}
\end{equation}
We will now show that $\dot{\lambda}_{0}>0$, which implies that the second
eigenvalue of $T_{-,k,\epsilon }$ is positive for small $\epsilon >0$. We
use the `dot' here to denote differentiation with respect to $\epsilon $, $%
\dot{\lambda}_{0}>0$ being the derivative of $\lambda _{\epsilon }$ with
respect to $\epsilon $ at $\epsilon =0$, and similarly for other quantities
below. Differentiating \eqref{secondeigen} with respect to $\epsilon $ at $%
\epsilon =0$ yields
\begin{equation}
-\dot{w}_{0}^{\prime \prime } +k\dot{w}_{0}-4[3-5u_{0}^{2}]u_{0}\dot{u}%
_{0}u_{0}^{\prime } -[6-5u_{0}^{2}]u_{0}^{2}\dot{w}_{0}=\dot{\lambda}_{0}%
\dot{w}_{0},  \label{wdot}
\end{equation}
where we have put $u_{0}\equiv u_{-,k,0}$ and $\dot{u}_{0}\equiv \dot{u}%
_{-,k,0}$ to simplify the notation. Observe that $w_{\epsilon }\in \mathcal{D%
}_{\epsilon }\Rightarrow \dot{w}_{0}\in \mathcal{D}_{0}=H^{2}(\mathbb{R})$.
Now, differentiating \eqref{solequ} with respect to $\epsilon $ at $%
\epsilon=0$ shows that
\begin{equation}
T_{0}\dot{u}_{0}:= -\dot{u}_{0}^{\prime \prime }+k\dot{u}_{0}
-[6-5u_{0}^{2}]u_{0}^{2}\dot{u}_{0}=\delta (x)u_{0}.  \label{diracterm}
\end{equation}
Multiplying \eqref{wdot} by $u_{0}^{\prime }$, integrating by parts and
using $T_{0}u_{0}^{\prime }=0$ then yields
\begin{equation}
\dot{\lambda}_{0}= \frac{4\int_{\mathbb{R}}[5u_{0}^{2}-3]u_{0}\dot{u}%
_{0}(u_{0}^{\prime })^2}{\int_{\mathbb{R}}(u_{0}^{\prime })^2}.
\label{lambdadot}
\end{equation}
Furthermore, straightforward calculations show that
\begin{equation*}
4[5u_{0}^{2}-3]u_{0} (u_{0}^{\prime})^2=
T_{0}(-ku_{0}+2u_{0}^{3}-u_{0}^{5})=T_{0}(-u_{0}^{\prime \prime }),
\end{equation*}
and it follows by \eqref{diracterm} that
\begin{equation*}
4\int_{\mathbb{R}}[5u_{0}^{2}-3]u_{0} (u_{0}^{\prime })^2\dot{u}
_{0}=(T_{0}(-u_{0}^{\prime \prime }),\dot{u}_{0})_{L^{2}}=(-u_{0}^{\prime
\prime },T_{0}\dot{u}_{0})_{L^{2}}=-u_{0}(0)^{\prime \prime }u(0)>0,
\end{equation*}
showing that $\dot{\lambda}_{0}$ is indeed positive. This implies that %
\eqref{morse-} holds for $\epsilon >0$ small enough. To complete the proof
of \eqref{morse-}, we invoke the continuous dependence of the first two
eigenvalues of $T_{-,k,\epsilon }$ on $\epsilon \in \lbrack 0,2\sqrt{k})$
(given by Lemma~\ref{holomorphy.lem}),
and the fact that the eigenvalues cannot cross zero unless $\epsilon =2\sqrt{%
k}$ (Lemma~\ref{nondegen.lem}~(i)).

We now turn to the proof of \eqref{morse+}. By \eqref{morse-}, the first
eigenvalue of $T_{-,k,\epsilon }$ is negative, for all $k\in \textstyle(%
\frac{\epsilon ^{2}}{4},\overline{k}_\epsilon)$. Since $\ker T_{\pm
,k,\epsilon }\neq\{0\}\Leftrightarrow k=\overline{k}_\epsilon$ by Lemma~\ref%
{nondegen.lem}, we only need to show that the first eigenvalue of $T_{\pm
,k,\epsilon }$ crosses zero at $(\overline{k}_\epsilon,\overline{u}%
_{\epsilon })$. Using the parametrization \eqref{foldcurve}, and denoting by
$\mu _{s}$ the first eigenvalue along the curve, this amounts to showing
that $\dot{\mu}_{0}\neq 0$. The first eigenvalue and eigenfunction $\mu
_{s},v_{s}$ satisfy $\mu _{0}=0,\ v_{0}=\eta _{\epsilon }$, and
\begin{equation*}
v_{s}\in \mathcal{D}_{\epsilon },\quad -v_{s}^{\prime
\prime}+k_sv_{s}-[6u_{s}^{2}-5u_{s}^{4}]v_{s}=\mu _{s}v_{s}, \quad s\in
(-\varepsilon,\varepsilon ).
\end{equation*}
In view of \eqref{sderiv}, differentiating with respect to $s$ and letting $%
s=0$ yields
\begin{equation}
-\dot{v}_{0}^{\prime \prime }+\overline{k}_\epsilon\dot{v}_{0} -4[3-5%
\overline{u}_{\epsilon }^{2}]\overline{u}_{\epsilon }\eta _{\epsilon }^{2}
-[6-5\overline{u}_{\epsilon }^{2}]\overline{u}_{\epsilon }^{2}\dot{v}_{0}=%
\dot{\mu}_{0}\eta_{\epsilon },  \label{vdot}
\end{equation}
where the `dot' now denotes again differentiation with respect to $s$.
Multiplying both sides of \eqref{vdot} by $\eta _{\epsilon }$, integrating
by parts and using $D_{u}F_{\epsilon }(\overline{k}_\epsilon,\overline{u}%
_{\epsilon})\eta _{\epsilon }=0$ yields
\begin{equation}
\dot{\mu}_{0}= \frac{4\int_{\mathbb{R}}[5\overline{u}_{\epsilon }^{2}-3]%
\overline{u}_{\epsilon }\eta _{\epsilon }^{3}} {\int_{\mathbb{R}%
}\eta_{\epsilon }^{2}}.  \label{mudot}
\end{equation}
We were not able to find an analytical argument showing that
\begin{equation*}
f(\epsilon ):=\int_{\mathbb{R}}[5\overline{u}_{\epsilon }^{2}-3]\overline{u}%
_{\epsilon }\eta _{\epsilon }^{3}= 2\int_{0}^{\infty }[5\overline{u}%
_{\epsilon }^{2}-3] \overline{u}_{\epsilon }|\overline{u}_{\epsilon
}^{\prime}|^3\neq 0.  \label{crosszero}
\end{equation*}
\begin{figure}[h]
\includegraphics[width=80mm]{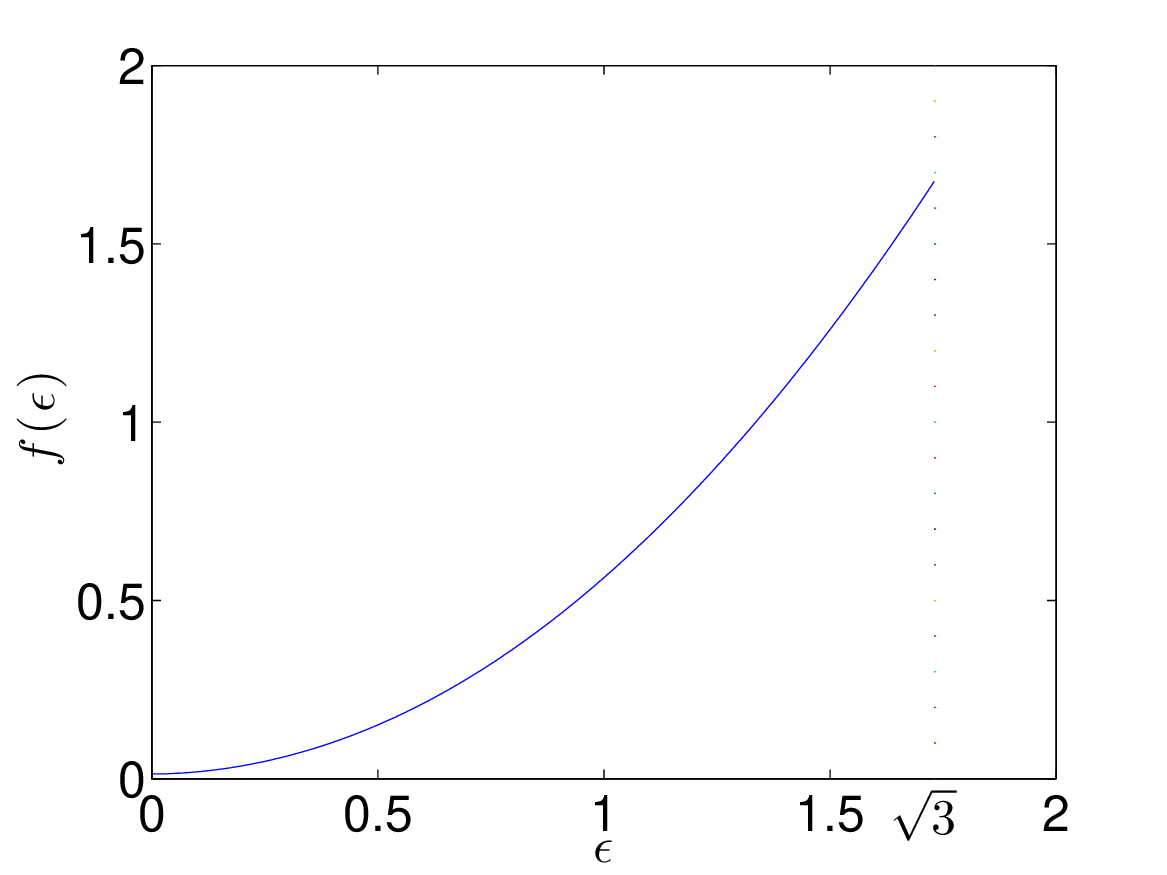}
\caption{The graph of $f(\protect\epsilon )$}
\label{fig1a}
\end{figure}
However, numerical computation of this integral (Fig.~\ref{fig1a}) clearly
shows that it is positive for all values of $\epsilon \in (0,\sqrt{3})$,
which concludes the proof of Proposition~\ref{morse.prop}. 
\end{proof}


\section{Stability}

\label{stab.sec}

We consider the stability of the bound states
\begin{equation}  \label{boundstates}
\psi _{\pm ,k,\epsilon }(x,z)=\mathrm{e}^{ikz}u_{\pm ,k,\epsilon }(x)
\end{equation}
with respect to perturbations of the initial soliton profile, $%
u_{\pm,k,\epsilon }$, in $H^{1}(\mathbb{R})$. Let us first remark that the
Cauchy problem associated with \eqref{nls} is globally well posed in $H^{1}(%
\mathbb{R})$, see \cite{cazenave}. That is, for any initial profile $\psi
(\cdot,0)\in H^{1}(\mathbb{R})$, there exists a unique continuous map $%
z\mapsto \psi (x,z)\in H^{1}(\mathbb{R})$, defined for all $z\in\real$, such that
$\psi(x,z)$ satisfies \eqref{nls}.

We will now define precisely what we mean by the stability of the bound
states of \eqref{nls}. It is well known that, due to the $U(1)$%
-invariance of \eqref{nls}, the appropriate notion of stability in this
context is that of orbital stability.

\begin{definition}
\rm
\label{orbstab.def} 
We say that the bound state $\psi _{k}(x,z)=%
\mathrm{e}^{ikz}u_{k}(x)$ is \emph{orbitally stable} if
\begin{equation*}
\text{for all} \ \varepsilon >0 \ \text{there exists} \ \delta >0\ \text{%
such that}
\end{equation*}
for any solution $\varphi (x,z)$ of (\ref{nls}) with initial data $%
\varphi(\cdot,0)\in H^{1}(\mathbb{R})$ there holds
\begin{equation*}
\Vert \varphi (\cdot,0)-u_{k}\Vert _{H^{1}}\leqslant \delta \implies
\inf_{\theta \in \mathbb{R}}\Vert \varphi (\cdot ,z)-\mathrm{e}%
^{i\theta}u_{k}\Vert _{H^{1}} \leqslant \varepsilon \quad \text{for all}\
z\geqslant 0.
\end{equation*} 
\end{definition}

Introducing the \emph{orbit} $\Theta (u_{k}):=\{\mathrm{e}%
^{i\theta}u_{k}:\theta \in \mathbb{R}\}$ of $u_{k}$, the above statement
can be rephrased as
\begin{equation*}
\dist(\varphi (\cdot,0),u_{k})\leqslant \delta \implies \dist%
(\varphi (\cdot ,z),\Theta (u_{k}))\leqslant \varepsilon \quad \text{for
all}\ z\geqslant 0.
\end{equation*}
Here we have introduced $\dist(u,v):=\Vert u-v\Vert _{H^{1}},\ u,v\in H^{1}(%
\mathbb{R})$, and the distance from a point $u\in H^{1}(\mathbb{R})$ to a
set $B\subset H^{1}(\mathbb{R})$ is defined as $\dist%
(u,B):=\inf_{v\in B}\dist(u,v)$.

A general theory of orbital stability for Hamiltonian systems invariant
under the action of a one-parameter group has been established in \cite{gss}.
More recently,
the theory has been revisited and extended in \cite{stuart2008} (with a special focus on nonlinear
Schr\"odinger equations) and in \cite{lnm}, where it is formulated in
a natural geometric framework. We shall now briefly outline how \eqref{nls} can be
interpreted as a Hamiltonian system, and state the key stability criteria we will use to prove
that the whole solution curve $\mathcal{S}$
consists of orbitally stable bound states. 

First, identify $H^1(\real,\complex)$ with $X:=H^1(\real,\real)\times H^1(\real,\real)$ by writing
$\psi=(\re\psi,\im\psi)\in X$ for all $\psi\in H^1(\real,\complex)$. 
We shall henceforth merely write $H^1$ for $H^1(\real,\real)$.
Identifying $L^2$ with its dual space,
consider the variational triple $H^1\subset L^2 \subset H^{-1}$, and $I:H^1\hookrightarrow H^{-1}$ 
the injection. Introducing the function $f(x,s)=\ep\delta(x)+2s-s^2, \ x\in\real, \ s\ge0,$ \eqref{nls} becomes
$$
i\psi_z=-\psi_{xx}-f(x,|\psi|^2)\psi,
$$
which can then be cast as
\begin{equation}\label{ham}
\frac{\dif}{\dif z}\psi(z)=JE'(\psi(z)),
\end{equation}
where  
$$
J=
\begin{pmatrix}
0 & -I\\
I & 0
\end{pmatrix}
$$
and the energy $E:X\to\real$ is given by
\begin{equation}\label{energie}
E(\psi)=\frac12\int_\real |\psi'(x)|^{2}\diff x-\frac12\int_\real \int_0^{|\psi|^2}f(x,s)\diff s\diff x.
\end{equation}
Of course, the energy is a conserved quantity. Namely, for any solution $\psi(x,z)$ of \eqref{nls} we have
$E(\psi(\cdot,z))=E(\psi(\cdot,0))$ for all $z\ge0$. Another important conserved quantity is the power
of the beam, given by
$$
Q(\psi)=\frac{1}{2}\int_\real |\psi(x)|^2\diff x.
$$
For the following discussion it is important to observe that $E,Q\in C^2(X,\real)$.

In this formalism, bound states take the form $\psi(x,z)=T(kz)\ffi$ for some $\ffi \in X$ and
$$
T(\theta)=
\begin{pmatrix}
\cos\theta & -\sin\theta\\
\sin\theta & \cos\theta
\end{pmatrix}, \quad \theta\in\real.
$$
Furthermore, the stationary equation \eqref{solequ} now reads
\begin{equation}\label{statham}
E'(\ffi)+kQ'(\ffi)=0
\end{equation}
for some real $\ffi=(u,0)$.

The Hamiltonian system \eqref{ham} is invariant under the action of the group 
$\{T(\theta)\}_{\theta\in\real}$. 
This corresponds to the invariance of \eqref{nls} with respect to multiplication by a phase factor
$\e^{i\theta}$. It becomes apparent that the notion of orbital stability defined above is precisely stability
modulo the action of this group. 

Given $k\in(\frac{\ep^2}{4},\overline{k}_\ep]$, the orbital stability of a corresponding solution $\ffi_k=(u_k,0)$ of \eqref{statham} can be proved by using the Lyapunov functional $L_k:X\to\real$,
$$
L_k(\ffi)=E(\ffi)+k Q(\ffi).
$$
The stability of the bound state  $\ffi_k=(u_k,0)$ then relies on
a coercivity property of $L_k$ that can be formulated in terms of the second derivative
$D^2L_k(\ffi_k):X\to X^*$, where $X^*=H^{-1}\times H^{-1}$. 
Using the stationary equation \eqref{statham}, we have
$$
D^2L_k(\ffi_k)=E''(\ffi_k)+k Q''(\ffi_k)=
\begin{pmatrix}
L^+(k,u_k) & 0\\
0 & L^-(k,u_k)
\end{pmatrix},
$$
where
$$
L^+(k,u_k)=-\frac{\dif^2}{\dif x^2}+k-f(x,u_k^2)-2\partial_sf(x,u_k^2)u_k^2 
=-\frac{\dif^2}{\dif x^2}+k-\ep\delta(x)-6u_k^2(x)+5u_k^4(x)
$$
and
$$
L^-(k,u_k)=-\frac{\dif^2}{\dif x^2}+k-f(x,u_k^2)
=-\frac{\dif^2}{\dif x^2}+k-\ep\delta(x)-2u_k^2(x)+u_k^4(x).
$$
As usual, for $v\in H^1$, $-v''$ is interpreted as an element of $H^{-1}$ through 
$\langle -v'',w\rangle_{H^{-1}\times H^1} = \langle v',w'\rangle_{L^2}$ for all $w\in H^1$.
Various forms of the required coercivity condition are given in \cite{stuart2008}. 
They all express the fact that the Hessian $D^2L_k(\ffi_k)$ is positive definite on the
codimension 2 subspace of $H^1$ orthogonal to the orbit $\Theta (u_{k})$ 
and parallel to the tangent space
to the surface $Q(u)=Q(u_k)$ at $u=u_k$; see condition (SC) in \cite[p.~349]{stuart2008} 
and Equ.~(126) in \cite{lnm}. We will use here condition (SC**) formulated in the NLS context in 
\cite[pp.~379-380]{stuart2008} as follows: 
There exists $\delta>0$ s.t.
\begin{equation}\label{coerc}
\langle L^+(k,u_k)v,v\rangle_{H^{-1}\times H^1}\ge \delta \Vert v\Vert_{L^2}^2 \ \text{and} \
\langle L^-(k,u_k)v,v\rangle_{H^{-1}\times H^1}\ge \delta \Vert v\Vert_{L^2}^2 \
\text{for all} \ v\in H^1(\real,\real) \ \text{s.t.} \int_\real vu_k \diff x=0.
\end{equation}

As earlier, we can interpret $L^\pm(k,u_k)$ as self-adjoint operators acting in $L^2$.
Remarking that the solution $u_k>0$ satisfies $L^-(k,u_k)u_k$,
it follows by standard spectral theory \cite{kato,s,alb} that $\ker L^-(k,u_k)=\vect\{u_k\}$, and the
spectrum of $L^-(k,u_k)$ consists of the eigenvalue zero, possibly some positive eigenvalues,
and the essential spectrum $[k,\infty)$. Furthermore, 
the spectrum of $L^+(k,u_{\pm,k,\ep})=T_{\pm,k,\epsilon}$ is known from Lemma~\ref{morse.prop}, and
we have to distinguish three different cases:
\begin{itemize}
\item[(I)] For $k\in(\frac{3}{4},\overline{k}_\ep)$, $L^+(k,u_{+,k,\ep})$ has strictly positive spectrum.
\item[(II)] For $k\in(\frac{\ep^2}{4},\overline{k}_\ep)$, $L^+(k,u_{-,k,\ep})$ has exactly one negative 
eigenvalue of multiplicity 1, and the rest of its spectrum is strictly positive.
\item[(III)] At the fold bifurcation point, $k=\overline{k}_\ep$, $L^+(\overline{k}_\ep,\overline{u}_\ep)$
has zero as a simple eigenvalue, 
$\ker L^+(\overline{k}_\ep,\overline{u}_\ep)=\vect\{|\overline{u}_\ep'|\}$, and the rest of the spectrum 
is strictly positive.
\end{itemize}

The spectral scenario (II) is the most common one in the NLS literature. 
In this case, the coercivity condition \eqref{coerc} can be derived 
(see for instance \cite[Proposition~9]{lnm}) as a consequence of the 
so-called \emph{VK condition}:\footnote{%
The monotonicity condition in \eqref{VKcondition} seems to have first been
formulated by Vakhitov and Kolokolov in \cite{vk}, and so is often referred
to as the `Vakhitov-Kolokolov condition' (VK condition for short).}
\begin{equation}
\frac{\mathrm{d}}{\mathrm{d}k}\Vert u_{\pm ,k,\epsilon }\Vert
_{L^{2}}^{2}>0,\quad k\in (\textstyle\frac{\epsilon ^{2}}{4},\overline{k}%
_\epsilon).  \label{VKcondition}
\end{equation}
In the physics literature, the VK condition
is often regarded as a criterion for stability on its own, taking no account of the underlying spectral
landscape. It may therefore seem surprising to the more physical reader that the solution 
curve $\mathcal{S}_{+,\epsilon }$ is indeed stable, even though it violates the VK
criterion. A formal justification was
nevertheless carried out by Yang in \cite{y}, where fold bifurcations
for general nonlinear Schr\"{o}dinger equations are studied.

We can now prove the stability theorem.

\begin{theorem}
\label{stable.thm} Let $\epsilon\in(0,\sqrt{3})$, and $\mathcal{S}$ be
defined by \eqref{wholecurve}. Then, for all $(k,u)\in \mathcal{S}$, $\psi
(x,z)=\mathrm{e}^{ikz}u(x)$ is an orbitally stable solution of \eqref{nls}.
\end{theorem}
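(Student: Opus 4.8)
The plan is to apply the two criteria (I) and (II), borrowed from \cite{gss}, to the three pieces of the curve $\mathcal{S}$, using the spectral information already accumulated. The solutions split naturally according to the sign structure of $T_{\pm,k,\epsilon}$, so I would treat the lower curve, the upper curve, and the turning point in turn. The spectral bookkeeping is already done in Proposition~\ref{morse.prop} and Lemma~\ref{nondegen.lem}; what remains is to verify the VK condition \eqref{VKcondition} where it is needed, and to handle the single degenerate point $(\overline{k},\overline{u}_\epsilon)$ separately.

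First I would dispose of the upper curve $\mathcal{S}_{+,\epsilon}$. By \eqref{morse+} we have $n(T_{+,k,\epsilon})=0$ for all $k\in(\tfrac34,\overline{k})$, and by Lemma~\ref{nondegen.lem}~(ii) zero is not an eigenvalue on this range; hence the spectrum of $T_{+,k,\epsilon}$ is strictly positive. Criterion (I) then gives orbital stability of $\psi_{+,k,\epsilon}$ directly, with \emph{no} appeal to the VK condition. This is precisely the point the introduction flags as counterintuitive, since $\|u_{+,k,\epsilon}\|_{L^2}$ is decreasing there; the resolution is simply that (I) does not require \eqref{VKcondition}. Next, for the lower curve $\mathcal{S}_{-,\epsilon}$, \eqref{morse-} gives exactly one simple negative eigenvalue, and Lemma~\ref{nondegen.lem}~(i) rules out zero as an eigenvalue for $k\in(\tfrac{\epsilon^2}{4},\overline{k})$, so the rest of the spectrum is strictly positive. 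Criterion (II) then applies, and stability on the lower curve is equivalent to verifying the VK condition \eqref{VKcondition}, i.e. $\tfrac{\dif}{\dif k}\|u_{-,k,\epsilon}\|_{L^2}^2>0$.

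The main obstacle is therefore the VK computation on the lower branch: I must show $\tfrac{\dif}{\dif k}\|u_{-,k,\epsilon}\|_{L^2}^2>0$ for all $k\in(\tfrac{\epsilon^2}{4},\overline{k})$. The natural route is to use the explicit formula \eqref{expinnedsol}, compute $\|u_{-,k,\epsilon}\|_{L^2}^2$ as a function of $k$ (and $\epsilon$), and differentiate; given the exponential form, the $L^2$ integral should be elementary, and I would expect to reduce the sign of the derivative to a manageable expression in $k$ and $\epsilon$. Realistically this is where Mathematica is indispensable, as the introduction anticipates: the resulting expression is cumbersome, and I would plot its sign (or bound it) over the admissible parameter region $0<\epsilon<\sqrt3$, $\tfrac{\epsilon^2}{4}<k<\overline{k}$ to confirm positivity. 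As a consistency check, note that at the turning point the power $\|u_{-,k,\epsilon}\|_{L^2}^2$ should match that of the upper branch, and the whole curve $\mathcal{S}$ is a single smooth arc by Proposition~\ref{fold.prop}, so monotone increase of the power in $k$ along the lower branch is geometrically consistent with the blow-up as $k\searrow\tfrac34$ established in Theorem~\ref{curve.thm}.

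Finally, the turning point $(\overline{k},\overline{u}_\epsilon)$ must be handled on its own, since there $D_uF_\epsilon$ is singular and neither (I) nor (II) applies verbatim. Here I would argue by a continuity/limiting argument: orbital stability is a closed condition under the uniform $H^1$-convergence of the profiles $u_{\pm,k,\epsilon}\to\overline{u}_\epsilon$ as $k\to\overline{k}$, which follows from the smoothness of $\mathcal{S}$ at the turning point (equations \eqref{foldcurve}--\eqref{sderiv}). More carefully, one observes from Lemma~\ref{nondegen.lem}~(iii) that zero is the \emph{principal} (lowest) eigenvalue of $D_uF_\epsilon(\overline{k},\overline{u}_\epsilon)$, so $T_{\pm}$ at the turning point has nonnegative spectrum with a one-dimensional kernel spanned by $\eta_\epsilon=|\overline{u}_\epsilon'|>0$; since $n=0$ on the upper branch approaching this point, the general theory (or a direct stability-under-perturbation argument) yields stability of $\psi$ at the turning point as well. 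Assembling the three cases establishes orbital stability for every $(k,u)\in\mathcal{S}$, completing the proof.
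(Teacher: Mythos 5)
Your treatment of the two open branches coincides with the paper's: $\mathcal{S}_{+,\epsilon}$ is handled by criterion (I) using \eqref{morse+} and Lemma~\ref{nondegen.lem}~(ii), and $\mathcal{S}_{-,\epsilon}$ by criterion (II) plus the VK condition. One small correction on the VK step: the lower branch lives over the whole interval $(\frac{\epsilon^2}{4},\overline{k})$, and \eqref{expinnedsol} only represents $u_{-,k,\epsilon}$ for $k<\frac34$; for $k\in(\frac34,\overline{k})$ you must differentiate the mass of the solution \eqref{regime2} with the constant $c_{-,k,\epsilon}$. The paper does exactly this split, obtaining \eqref{explicitslopes} for $k>\frac34$ (whose numerator is positive precisely when $k>\frac34$, so the sign is immediate) and \eqref{phi}--\eqref{dphi} for $k<\frac34$, where positivity follows from the elementary bound $3+\epsilon^2-2\epsilon\sqrt{k}>0$; no plotting is needed for this part.

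The genuine gap is at the turning point. Your primary argument --- that orbital stability is ``a closed condition under the uniform $H^1$-convergence of the profiles'' --- is not a valid general principle: a limit of orbitally stable states need not be stable, because the stability modulus $\delta(\varepsilon)$ may degenerate along the sequence (and the turning point is exactly where the spectral picture changes, $n(T)$ jumping from $1$ to $0$). What makes the argument work is not convergence of the profiles but a \emph{uniform} lower bound $\delta_0>0$ on the stability modulus for all points of $\mathcal{S}_{\pm,\epsilon}$ in an $H^1$-neighborhood of $\overline{u}_\epsilon$; with that in hand, one picks a single nearby stable orbit $u_{k_0,\epsilon}$ with $\|u_{k_0,\epsilon}-\overline{u}_\epsilon\|_{H^1}\le\min\{\varepsilon/2,\delta_0/2\}$ and concludes by the triangle inequality for $\dist(\cdot,\Theta(\cdot))$. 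This uniformity is the content of \eqref{stablefold1} in the paper and is the step your sketch does not supply. Your fallback --- that since zero is the principal eigenvalue and the spectrum is nonnegative ``the general theory yields stability'' --- also fails: criterion (I) requires \emph{strictly} positive spectrum, and the GSS framework does not apply verbatim when $D_uF_\epsilon(\overline{k},\overline{u}_\epsilon)$ has a kernel, which is precisely why the paper isolates this point and argues via nearby orbits rather than via the linearization at $\overline{u}_\epsilon$ itself.
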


\begin{proof}
We first address cases (I) and (II), that is, the stability of the solutions belonging to the pieces of
curve $\mathcal{S}_{\pm ,\epsilon }$. In case (I), the verification of \eqref{coerc} is straightforward,
so $\mathcal{S}_{+,\epsilon }$ is indeed stable.
For the stability of $\mathcal{S}_{-,\epsilon }$ we will prove that the function
\begin{equation}
(\textstyle\frac{\epsilon ^{2}}{4},\overline{k}_\epsilon)\ni k\mapsto \Vert
u_{-,k,\epsilon }\Vert _{L^{2}}  \label{monotonemap}
\end{equation}
is strictly increasing. 
Firstly, for $k\in (\frac{3}{4},\overline{k}_\epsilon)$, we find using
Mathematica that\footnote{%
It turns out that the expressions for $\frac{\mathrm{d}}{\mathrm{d}k}\Vert
u_{\pm,k,\epsilon }\Vert _{L^{2}}^{2}$ are much simpler than those for $%
\Vert u_{\pm ,k,\epsilon }\Vert _{L^{2}}^{2}$ in the regime $k\in (\frac{3}{4%
},\overline{k}_\epsilon)$.}
\begin{equation}
\frac{\mathrm{d}}{\mathrm{d}k}\Vert u_{-,k,\epsilon }\Vert _{L^{2}}^{2}=
\frac{\frac{2\sqrt{3}\epsilon }{\sqrt{3+\epsilon ^{2}-4k}}-\frac{3}{\sqrt{k}}%
}{4k-3}\quad \text{and}\quad \frac{\mathrm{d}}{\mathrm{d}k}\Vert
u_{+,k,\epsilon }\Vert _{L^{2}}^{2} =-\frac{\frac{2\sqrt{3}\epsilon }{\sqrt{%
3+\epsilon ^{2}-4k}}+\frac{3}{\sqrt{k}}}{4k-3}.  \label{explicitslopes}
\end{equation}
It follows that $\Vert u_{-,k,\epsilon }\Vert _{L^{2}}$ is indeed
increasing, while $\Vert u_{+,k,\epsilon }\Vert _{L^{2}}$ is decreasing, for
$k\in (\frac{3}{4},\overline{k}_\epsilon)$. We also observe explicitly here
that $\lim_{k\searrow \frac{3}{4}}\frac{\mathrm{d}}{\mathrm{d}k}\Vert
u_{+,k,\epsilon }\Vert _{L^{2}}^{2}=-\infty$, which is consistent with
Theorem~\ref{curve.thm}.

For $k<3/4$, a straightforward calculation using \eqref{expinnedsol} shows
that
\begin{equation}
\Vert u_{-,k,\epsilon }\Vert _{L^{2}}^{2}=\sqrt{3}\log \varphi
_{\epsilon}(k)\quad \text{where}\quad \varphi _{\epsilon }(k):=\frac{\sqrt{3}%
\epsilon +\sqrt{3\epsilon ^{2}+(4k-\epsilon ^{2})(3-4k)} +\big(\sqrt{3}+2%
\sqrt{k}\big)\big(2\sqrt{k}-\epsilon \big)}{\sqrt{3}\epsilon +\sqrt{%
3\epsilon^{2}+(4k-\epsilon ^{2})(3-4k)}+\big(\sqrt{3}-2\sqrt{k}\big)\big(2%
\sqrt{k}-\epsilon \big)}.  \label{phi}
\end{equation}
Differentiation then yields
\begin{equation}
\frac{\mathrm{d}}{\mathrm{d}k}\varphi _{\epsilon }(k)=8\sqrt{k}\frac{\sqrt{3}%
\sqrt{3\epsilon ^{2} +(4k-\epsilon ^{2})(3-4k)}+2\sqrt{k}\big(%
3+\epsilon^{2}-2\epsilon \sqrt{k}\big)}{\sqrt{3\epsilon ^{2} +(4k-\epsilon
^{2})(3-4k)}\Big(\sqrt{3}\epsilon +\sqrt{3\epsilon ^{2}+(4k-\epsilon
^{2})(3-4k)} +\big(\sqrt{3}-2\sqrt{k}\big)\big(2\sqrt{k}-\epsilon \big)\Big)%
^{2}},  \label{dphi}
\end{equation}
where we observe that
\begin{equation*}
k<\frac{3}{4}\implies 3+\epsilon ^{2}-2\epsilon \sqrt{k}> \big(\sqrt{3}%
-\epsilon \big)^{2}+\sqrt{3}\epsilon >0.
\end{equation*}
Therefore, $\frac{\mathrm{d}}{\mathrm{d}k}\varphi _{\epsilon}(k)>0$, so $%
\Vert u_{-,k,\epsilon }\Vert _{L^{2}}$ is also increasing for all $k\in (%
\frac{\epsilon ^{2}}{4},\frac{3}{4})$. We have thus proved that the curves 
$\mathcal{S}_{\pm ,\epsilon }$ are both stable.

We finally consider case (III). 
To prove the stability of the solution $(\overline{k}_\epsilon,\overline{u}_{\epsilon })$
we show directly that \eqref{coerc} holds. Since 
$\ker L^-(\overline{k}_\epsilon,\overline{u}_{\epsilon })=\vect\{\overline{u}_{\epsilon }\}$,
the second condition in \eqref{coerc} is clearly satisfied. Similarly, in view of (III),
$$
\langle L^+(\overline{k}_\epsilon,\overline{u}_{\epsilon })v,v\rangle_{H^{-1}\times H^1}\ge 
\delta \Vert v\Vert_{L^2}^2 \ 
\text{for all} \ v\in H^1(\real,\real) \ \text{s.t.} \int_\real v|\overline{u}_\ep'| \diff x=0.
$$ 
But $\int_\real|\overline{u}_\ep'|\overline{u}_\ep\diff x>0$, so denoting by $P$ the
projection onto the orthogonal space to $|\overline{u}_\ep'|$ in $L^2(\real)$, there exists
$a>0$ such that
$$
\Vert P v\Vert_{L^2}^2\ge a  \Vert v\Vert_{L^2}^2 \ \text{for all} \ v\in H^1(\real,\real)  \
\text{s.t.}  \int_\real v\overline{u}_\ep \diff x=0.
$$
Then, since 
$L^+(\overline{k}_\epsilon,\overline{u}_{\epsilon }):
\mathcal{D}_\epsilon\subset L^2(\mathbb{R})\to L^2(\mathbb{R})$ is self-adjoint with 
$L^+(\overline{k}_\epsilon,\overline{u}_{\epsilon })|\overline{u}_\ep'|=0$, it follows that
$$
\langle L^+(\overline{k}_\epsilon,\overline{u}_{\epsilon })v,v\rangle_{H^{-1}\times H^1}
=\langle L^+(\overline{k}_\epsilon,\overline{u}_{\epsilon })Pv,Pv\rangle_{H^{-1}\times H^1}
\ge \delta{a} \Vert v\Vert_{L^2}^2 \ 
\text{for all} \ v\in H^1(\real,\real) \ \text{s.t.} \int_\real v\overline{u}_\ep \diff x=0.
$$ 
Hence, the second condition in \eqref{coerc} is also satisfied.
The proof is complete.
\end{proof}

\begin{remark}
\rm
One can deduce from \eqref{explicitslopes} and \eqref{phi}--%
\eqref{dphi} that
\begin{equation*}
\lim_{k\searrow3/4}\frac{\mathrm{d}}{\mathrm{d} k}\Vert
u_{-,k,\epsilon}\Vert_{L^2}^2= \lim_{k\nearrow3/4}\frac{\mathrm{d}}{\mathrm{d%
} k}\Vert u_{-,k,\epsilon}\Vert_{L^2}^2 =\sqrt{3}\left(\frac{1}{\epsilon^2}%
+\frac13\right),
\end{equation*}
showing that the slopes calculated from the solutions with $k<3/4$ and with $%
k>3/4$ indeed match where the two portions of $\mathcal{S}_{-,\epsilon}$
meet.
\end{remark}


\section{Numerics}

\label{num.sec}

Hereafter we present a numerical method, which we used for computing
solutions of Eq.~\eqref{solequ}. This was helpful to understand the behavior
of solutions before we had found their explicit representations. The method
is based on the \emph{continuous normalized gradient flow}, which was
studied and implemented in \cite{bao} in the context of the NLS equation
with a cubic nonlinearity.

\subsection{The numerical scheme}

We look for a minimizer of the energy
\begin{equation}
E(u)=\frac{1}{2}\Big\{\Vert u_{x}\Vert _{L^{2}}^{2}-\epsilon
|u(0)|^{2}-\Vert u\Vert _{L^{4}}^{4}+\frac{1}{3}\Vert u\Vert _{L^{6}}^{6}%
\Big\},  \label{energy}
\end{equation}
with a given power constraint
\begin{equation}
\Vert u\Vert _{L^{2}}=a>0.  \label{constraint}
\end{equation}
The minimizer is then a solution of \eqref{solequ} which can be interpreted
as a (nonlinear) eigenfunction with eigenvalue
\begin{equation*}
k=\frac{-\Vert u_{x}\Vert _{L^{2}}^{2}+\epsilon |u(0)|^{2}+2\Vert u\Vert
_{L^{4}}^{4}-\Vert u\Vert _{L^{6}}^{6}}{\Vert u\Vert _{L^{2}}^{2}}.
\label{eigenvalue}
\end{equation*}

In the physics literature, this method is known as imaginary time
propagation ($z\rightarrow -it$) \cite{Im-time}
\begin{equation*}
u_{t}=-\frac{\delta E(u)}{\delta u}=u_{xx}+\epsilon \delta
(x)u+2|u|^{2}u-|u|^{4}u.  \label{imtime}
\end{equation*}
Thus, in order to solve \eqref{solequ}, we introduce the imaginary time and
iterate in this time. After each time step, we renormalize the solution so
as to maintain the constraint (\ref{constraint}). The discretization of %
\eqref{imtime} is done by means of semi-implicit backward Euler central
differences.

Let us consider the time sequence $t_{0}<t_{1}<t_{2}<\dots <t_{n}$, with
time step $\mathrm{d}t=t_{n}-t_{n-1}$, and space grid $x_{j}=x_{0}+jh_{x}$
with $j=0,1,2,\dots ,J$, where we solve the equation on $[x_{0},x_{J}]$ with
$J$ grid points and the mesh size $h_{x}=(x_{J}-x_{0})/J$. The discrete
solution is denoted by $u_{j}^{n}=u(t^{n},x_{j})$ and $j_{0}$ is the index
for which $x_{j_{0}}=0$. At $x_{j_{0}}$ we use the properties of $u$ in
Proposition~\ref{basic.prop}~(v), in the discrete form:
\begin{equation*}
u_{j_{0}+1}^{n}=\Big(1-\frac{h_{x}\cdot \epsilon }{2}\Big) u_{j_{0}}^{n
},\quad u_{j_{0}-1}^{n }=\Big(1-\frac{h_{x}\cdot \epsilon }{2}\Big)%
u_{j_{0}}^{n}  \label{discrete0}
\end{equation*}

On $[t_n,t_{n+1}]$ we solve:
\begin{align*}
\frac{u^{*}_j -u^n_j}{\mathrm{d} t} &=\frac{u^{*}_{j+1}-2u^{*}_j+u^{*}_{j-1}%
}{h_x^2}+ 2 (u^n_j)^2 u^{*}_j- (u^n_j)^4 u^{*}_j & \quad \mbox{ for }
0\leqslant j<j_0-1 \mbox{ and } j_0+1 <j\le J; \\
\frac{u^{*}_{j} -u^n_{j}}{\mathrm{d} t}&=\frac{u^{*}_{j}(\frac{2}{2-hx \cdot
\epsilon}-2)+u^{*}_{j}}{h_x^2}+ 2 (u^n_j)^2 u^{*}_j- (u^n_j)^4 u^{*}_j &
\quad \mbox{ for } j=j_0-1; \\
u^*_{j_0}&=\frac{u^*_{j_0-1}}{(1-\frac{h_x \cdot \epsilon}{2})} & \quad %
\mbox{ for } j= j_0; \\
\frac{u^{*}_j -u^n_j}{\mathrm{d} t} &=\frac{u^{*}_{j+1}+u^{*}_j(\frac{2}{%
2-h_x \cdot \epsilon}-2)}{h_x^2}+ 2 (u^n_j)^2 u^{*}_j- (u^n_j)^4 u^{*}_j &
\quad \mbox{ for } j=j_0+1; \\
u^{n+1}_j&=\frac{a\cdot u^{*}_j}{\|u^{*}\|_2} & \quad \text{for all} \ j.
\end{align*}

\subsection{Numerical simulations}

In this section we compare the discretized solution $u_{j}^{n}$ with the
exact one for different values of the parameters. We solve the equation on $%
[-40,40]$, with $J=3200$ grid points and time step $\mathrm{d}t=10^{-4}$
(thus $x_{0}=-40,x_{J}=40$, $j_{0}=1600$ and $h_{x}=1/40$). For fixed $%
\epsilon $ we draw the bifurcation diagram for the power of the exact
solution \eqref{expinnedsol}--\eqref{regime2}, i.e., its norm $\Vert
u\Vert_{L^{2}}$, and pick up values $a_{1},a_{2},\dots ,a_{6}$ of the power,
see Fig.~\ref{curve1.fig}--\ref{curve3.fig}. Then we calculate the
discretized solution $u_{j}^{n}$ with fixed power $a_{l}$ ($l=1,2,\dots ,6$),
and compare it to the exact solution with the corresponding $k$.

\begin{figure}[th]
\includegraphics[width=80mm]{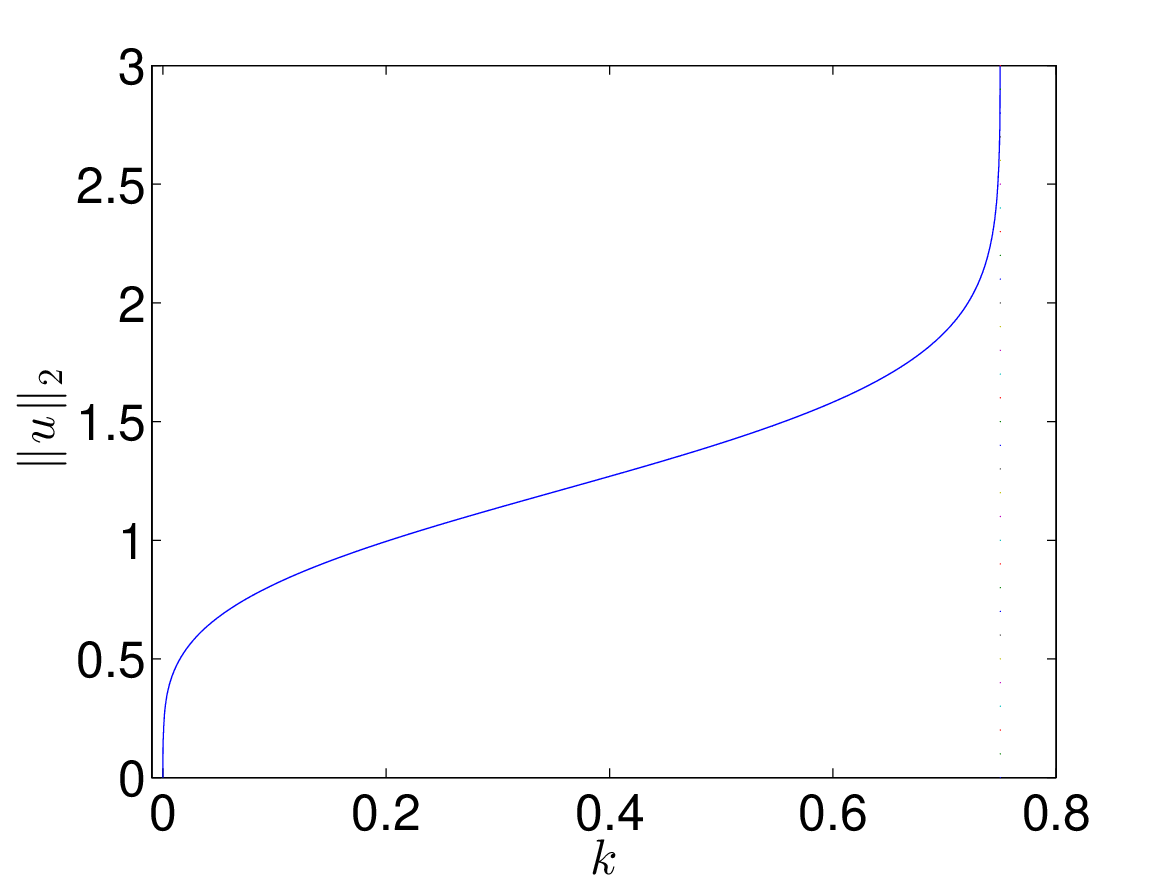}
\caption{For $\protect\epsilon =0$, we plot $\Vert u\Vert _{L^{2}}$ against $%
k$, using the explicit solutions $u_{-,k,0}$ obtained in Section~\protect\ref%
{expsol.sec}.}
\label{curve0.fig}
\end{figure}

\begin{figure}[th]
\includegraphics[width=80mm]{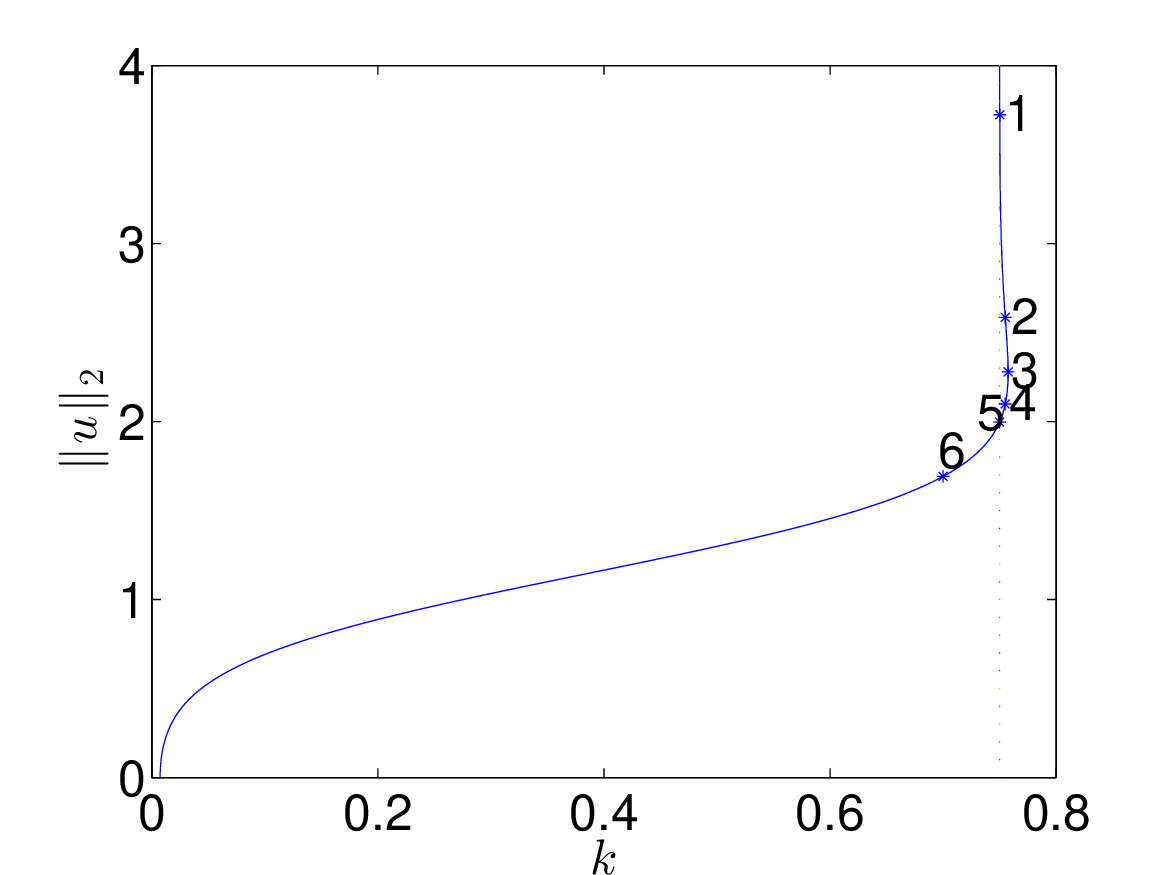} %
\includegraphics[width=80mm]{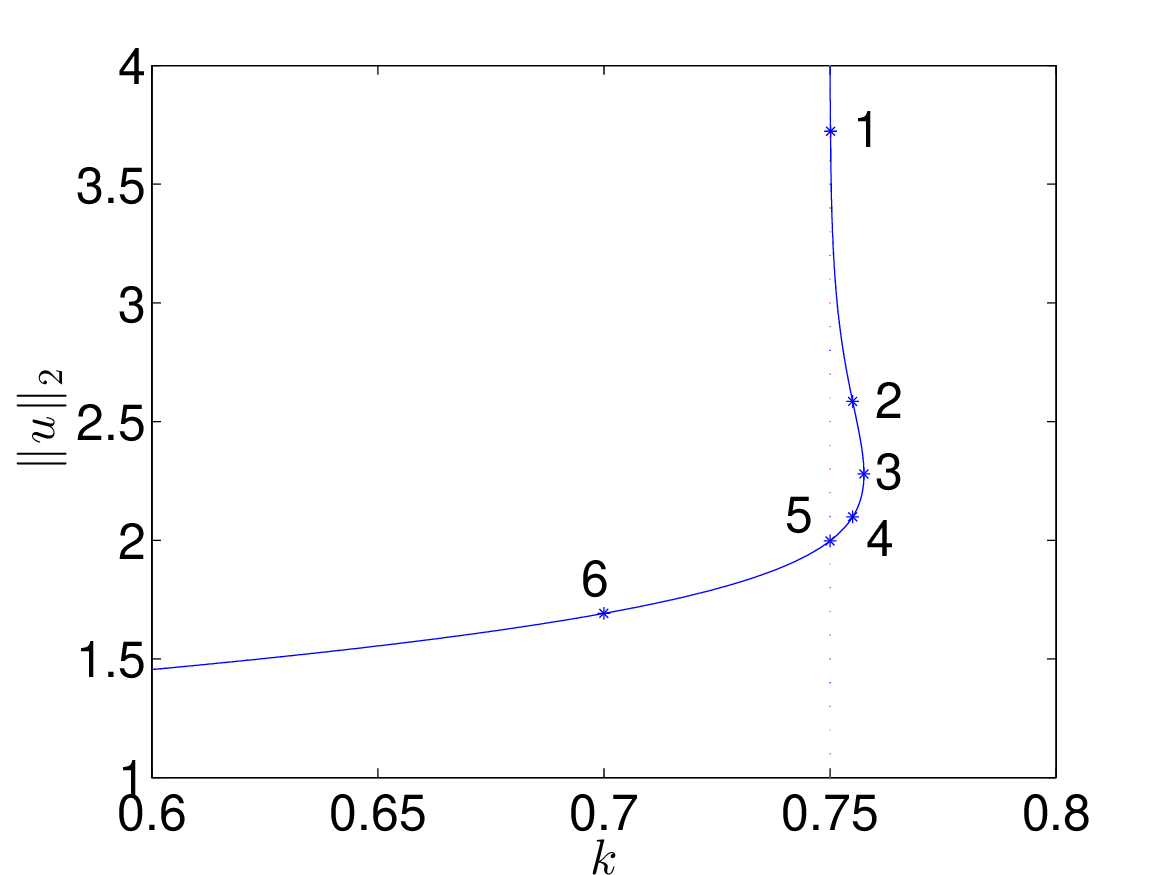}
\caption{For $\protect\epsilon =0.1\protect\sqrt{3}$, we plot $\Vert u\Vert
_{L^{2}}$ against $k$, using the explicit solutions $u_{\pm ,k,\protect%
\epsilon } $ obtained in Section~\protect\ref{expsol.sec}. In the second
plot we zoomed in, to have a closer view of the fold bifurcation.}
\label{curve1.fig}
\end{figure}

\begin{figure}[th]
\includegraphics[width=80mm]{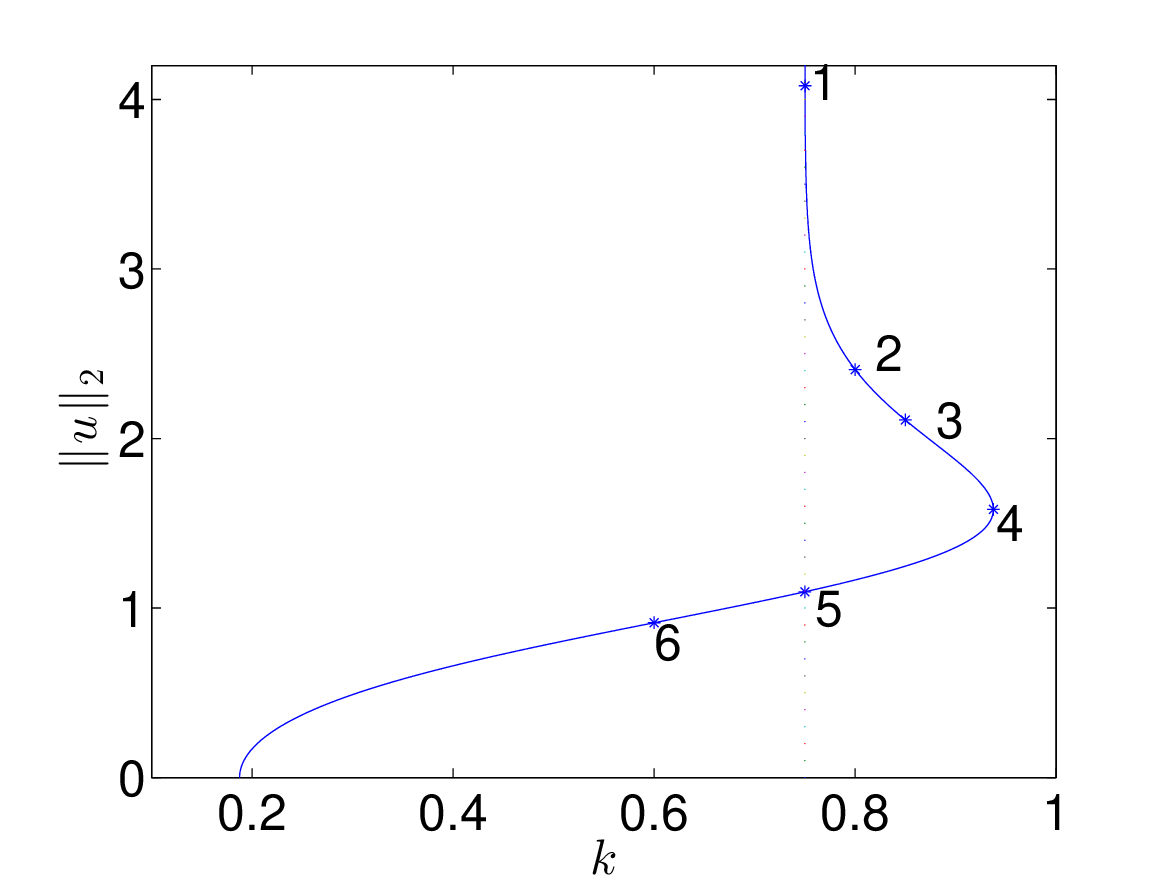}
\caption{For $\protect\epsilon =0.5\protect\sqrt{3}$, we plot $\Vert u\Vert
_{L^{2}}$ against $k$, using the explicit solutions $u_{\pm ,k,\protect%
\epsilon } $ obtained in Section~\protect\ref{expsol.sec}.}
\label{curve2.fig}
\end{figure}

\begin{figure}[th]
\includegraphics[width=80mm]{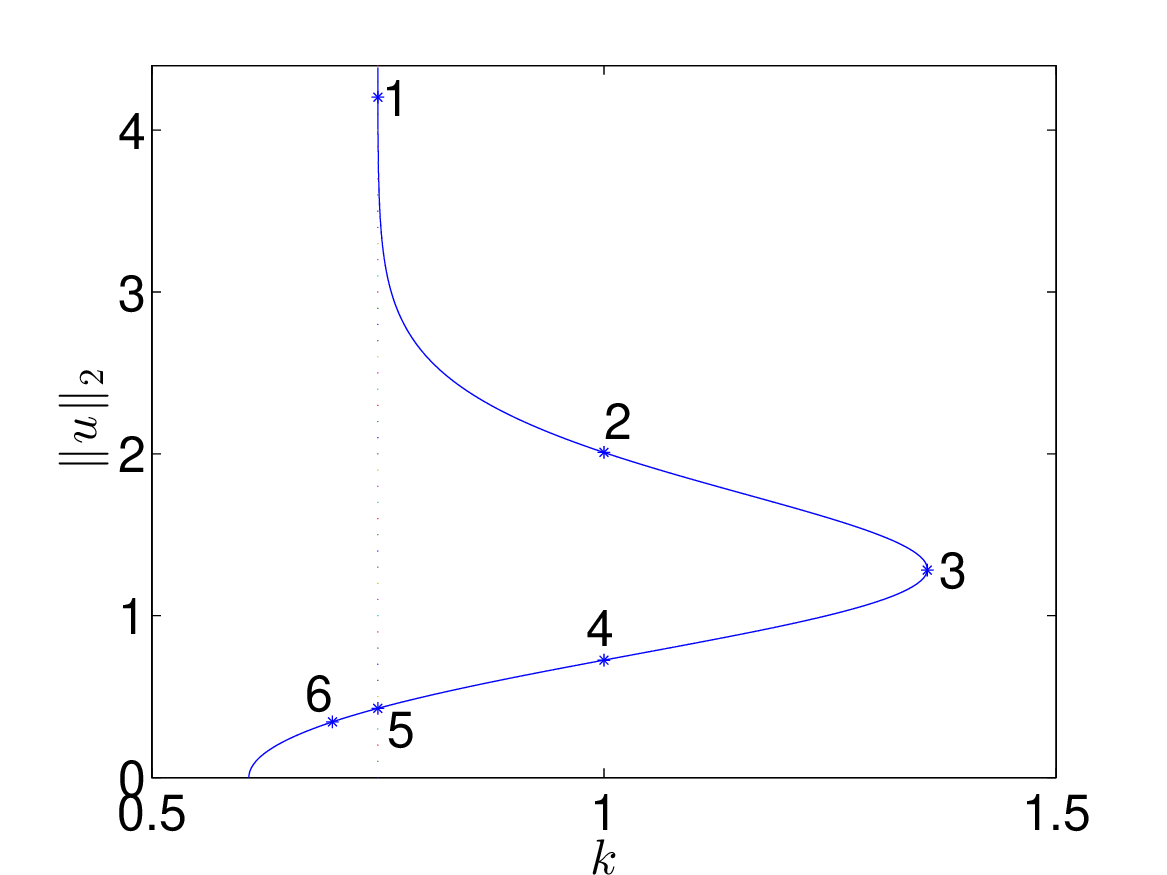}
\caption{For $\protect\epsilon =0.9\protect\sqrt{3}$, we plot $\Vert u\Vert
_{L^{2}}$ against $k$, using the explicit solutions $u_{\pm ,k,\protect%
\epsilon } $ obtained in Section~\protect\ref{expsol.sec}.}
\label{curve3.fig}
\end{figure}

In Fig.~\ref{curve0.fig} the bifurcation diagram for the $L^2$ norm of $%
u_{-,k,0}$ is displayed for $\epsilon=0$ and $k\in (0,\frac34)$. Fig.~\ref%
{curve1.fig}--\ref{curve3.fig} illustrate the bifurcation diagrams for
different values of $\epsilon>0$, namely $0.1\sqrt{3}, \ 0.5\sqrt{3}$ and $%
0.9\sqrt{3}$. We plot the $L^2$ norm of the solutions against $k$. The lower
branches are obtained from $u_{-,k,\epsilon}$ with $k\in(\frac{\epsilon^2}{4}%
,\overline{k}_\epsilon)$, while the upper branches display the $L^2$ norm of
$u_{+,k,\epsilon}$ with $k\in(\frac34,\overline{k}_\epsilon)$. In each
diagram we observe the behavior predicted by the exact analysis of the
previous sections: the $L^2$ norm bifurcates from zero at $k=\frac{\epsilon^2}{4}$ and diverges
along the upper branch as $k\searrow \frac{3}{4}$, after `turning backwards'
at $k=\overline{k}_\epsilon$.

\begin{figure}[bh]
1) \includegraphics[width=60mm]{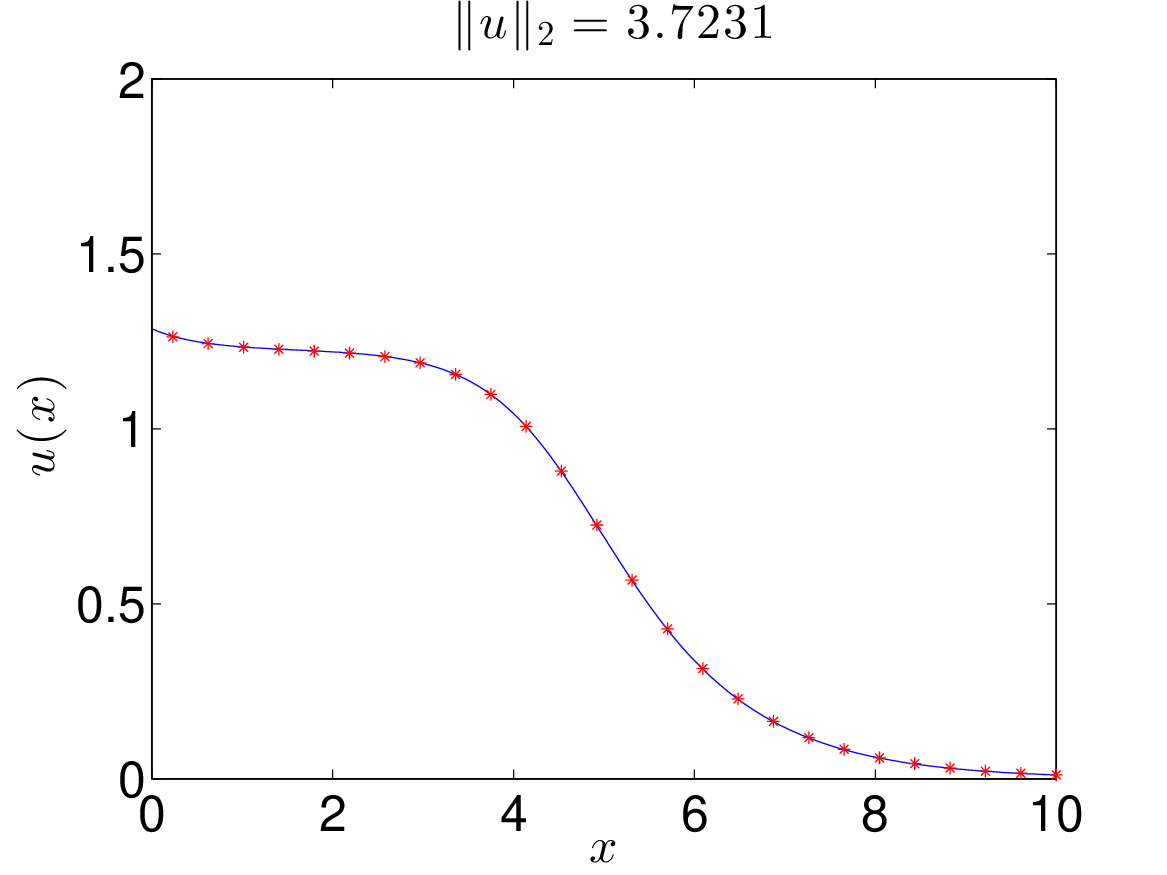} 2) %
\includegraphics[width=60mm]{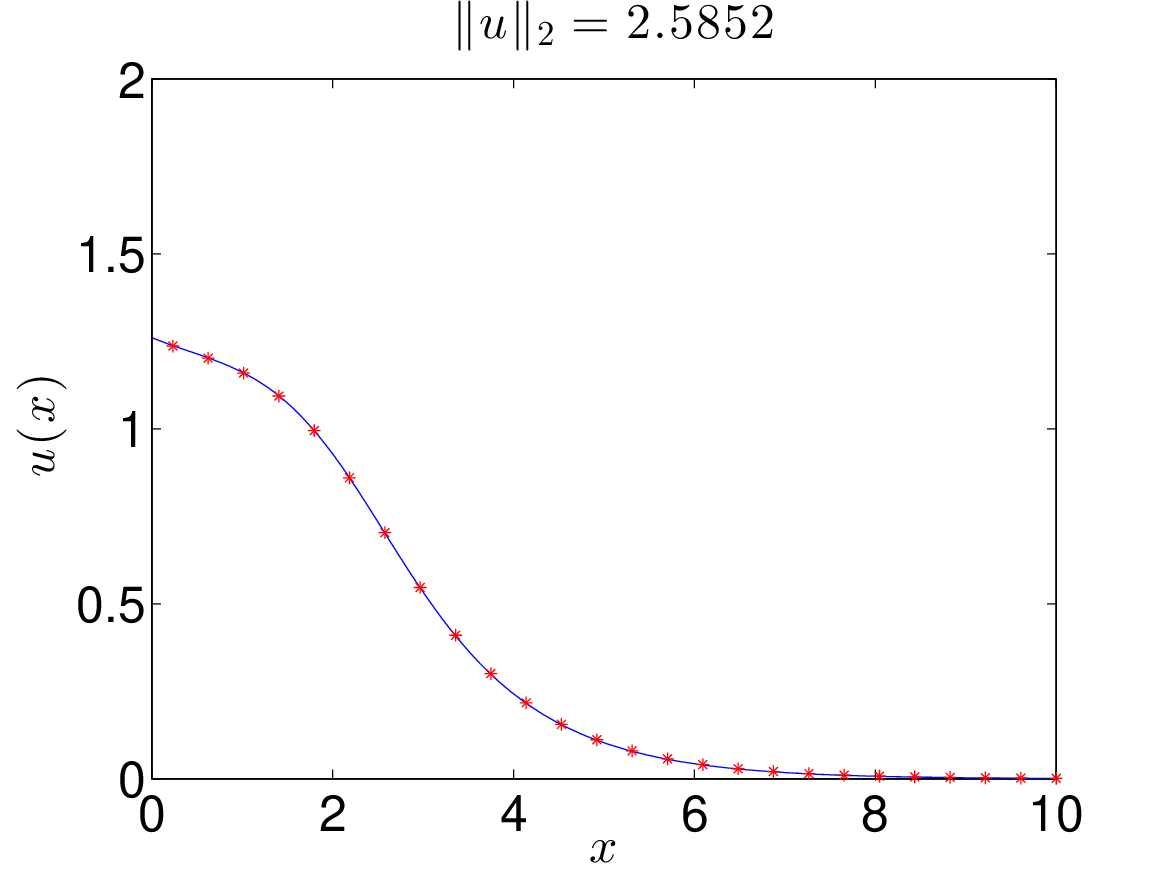} \newline
3) \includegraphics[width=60mm]{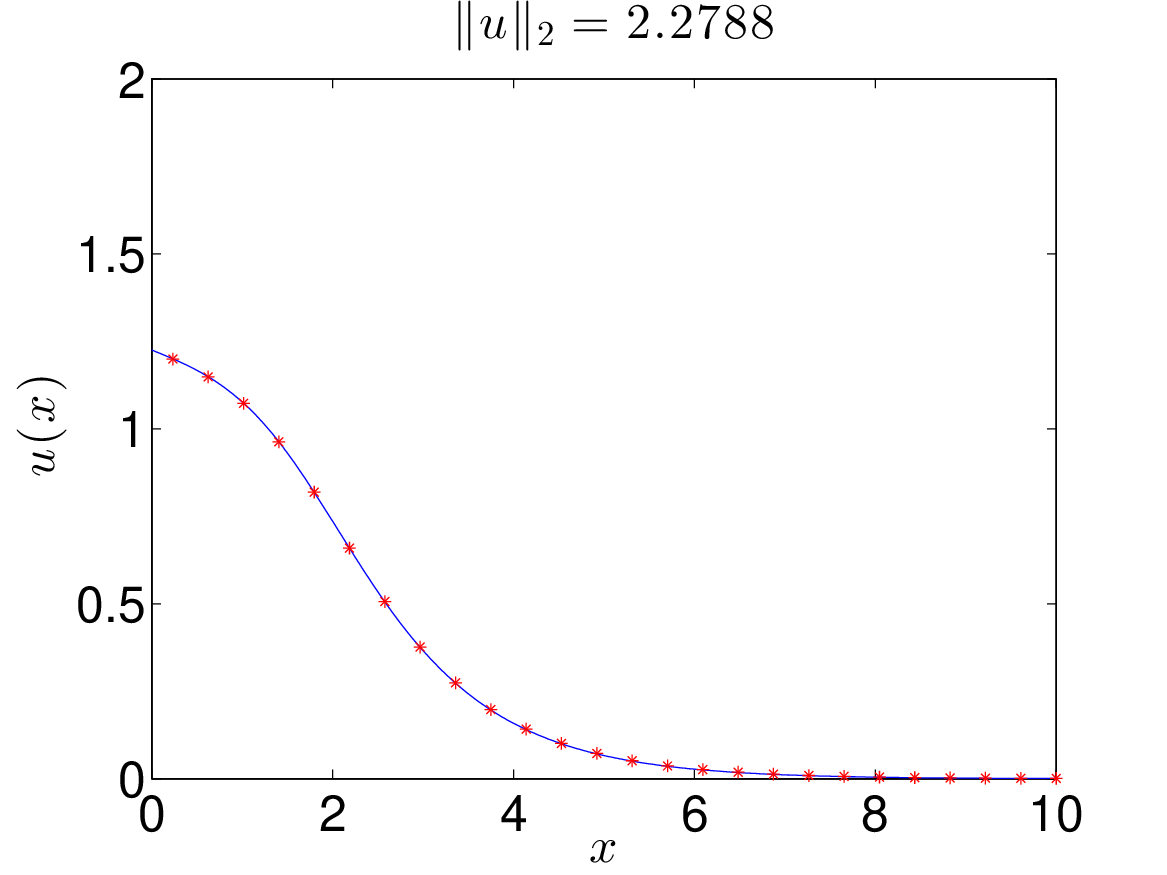} 4) %
\includegraphics[width=60mm]{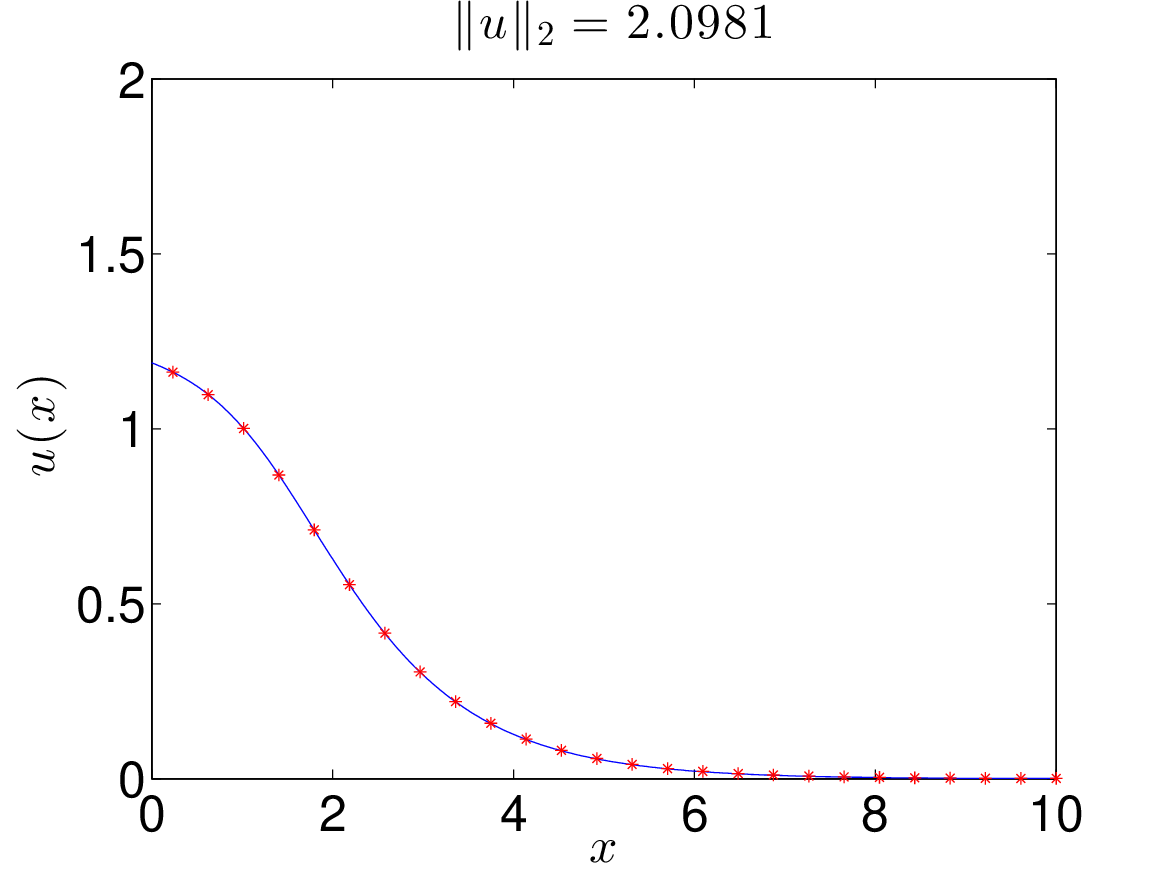} \newline
5) \includegraphics[width=60mm]{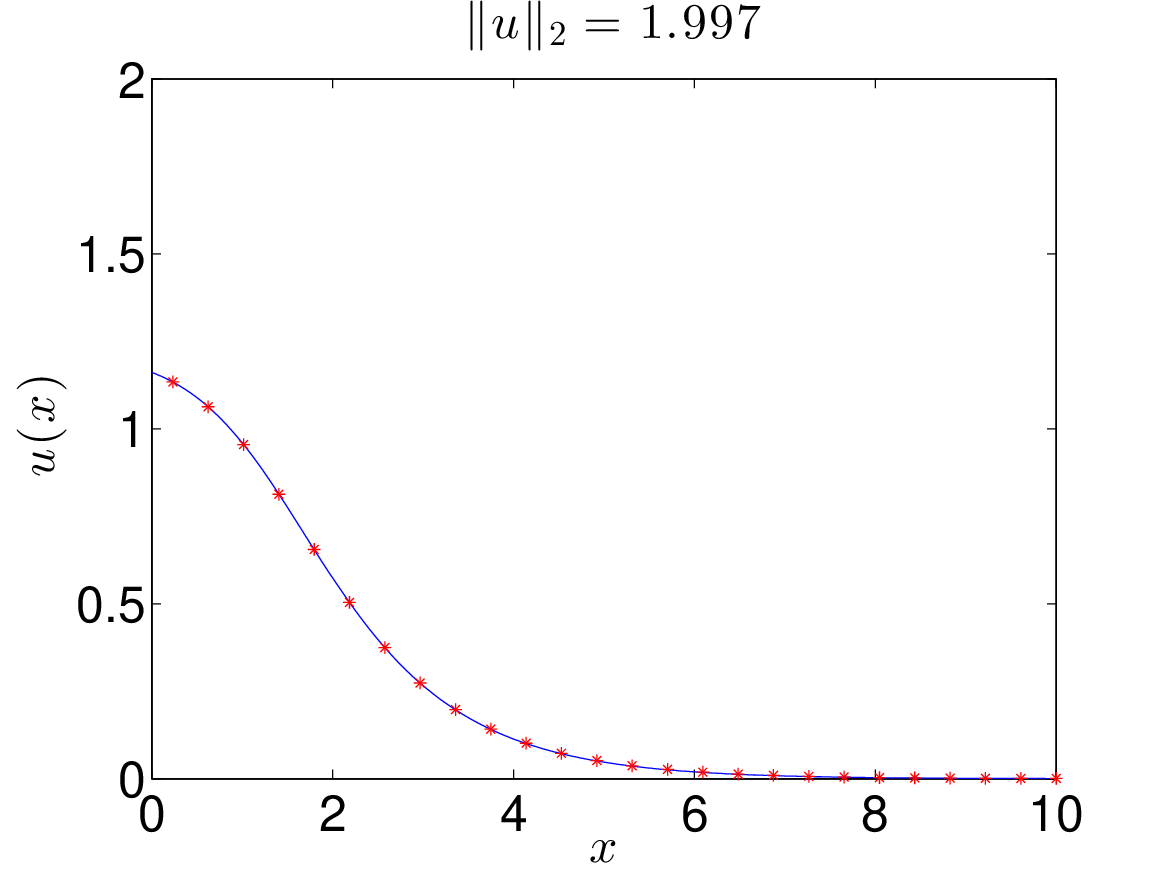} 6) %
\includegraphics[width=60mm]{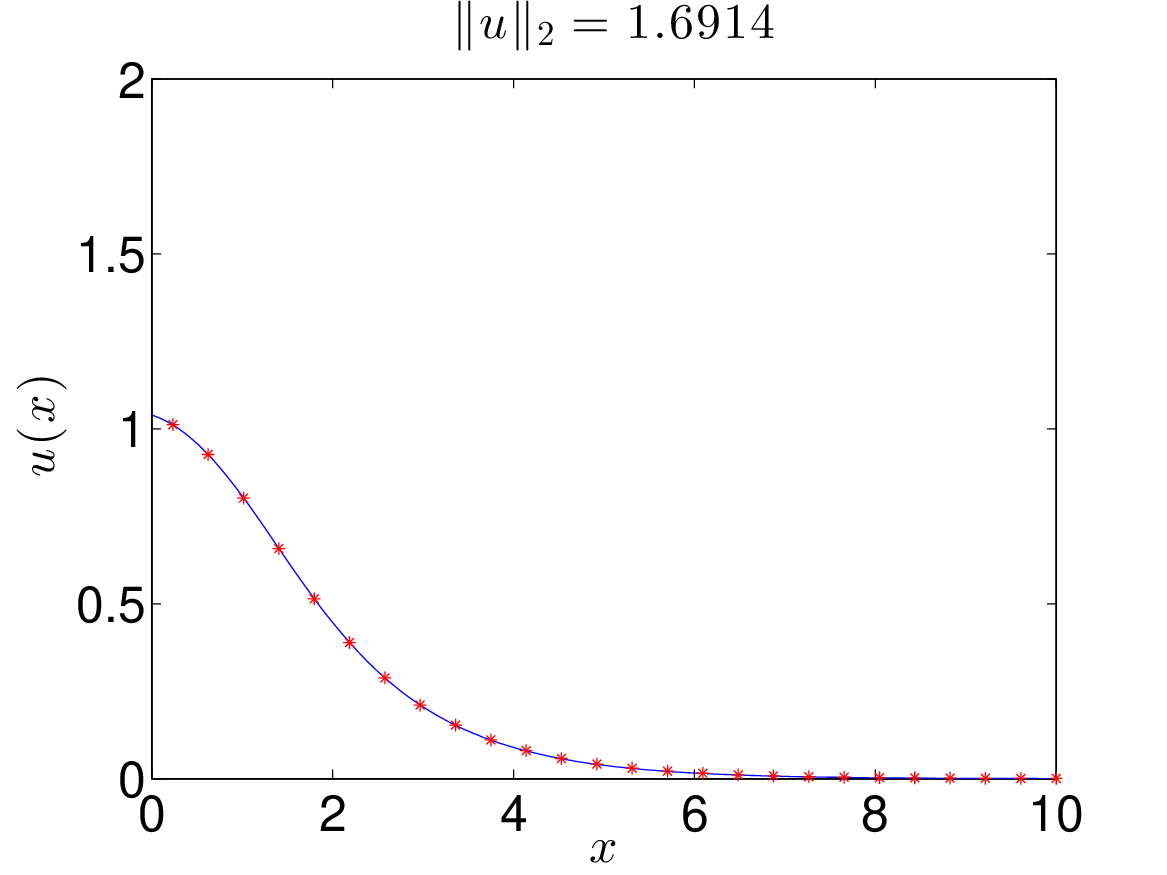} \newline
\caption{For $\protect\epsilon =0.1\protect\sqrt{3}$, in each of the plots
1)--6) we compare the discrete solution $u^{n}(x_{j})$ (\textcolor{red}{*}) to the
exact solution $u_{\pm ,k,\protect\epsilon }(x)$ (\textcolor{blue}{solid line}), corresponding
to the points on the bifurcation curve in Fig.~\protect\ref{curve1.fig}.}
\label{fig2}
\end{figure}

\begin{figure}[bh]
1) \includegraphics[width=60mm]{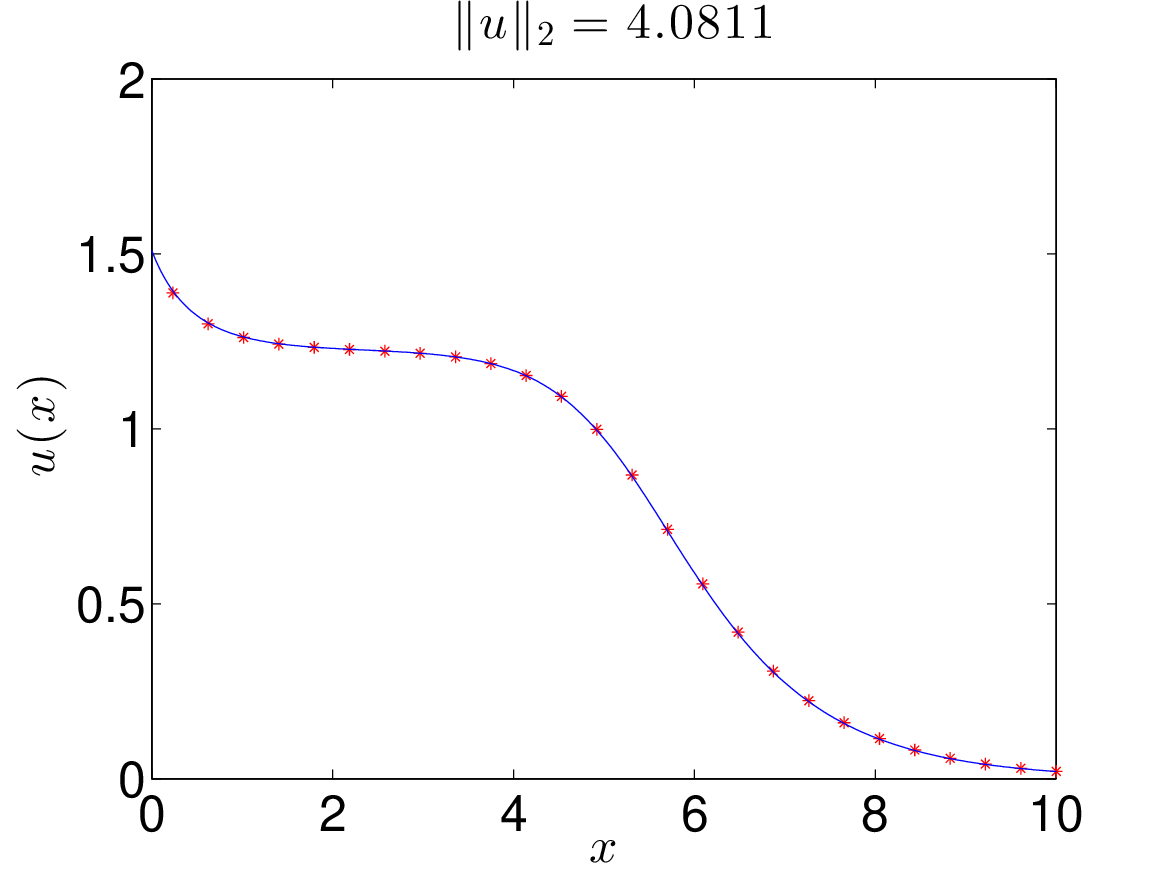} 2) %
\includegraphics[width=60mm]{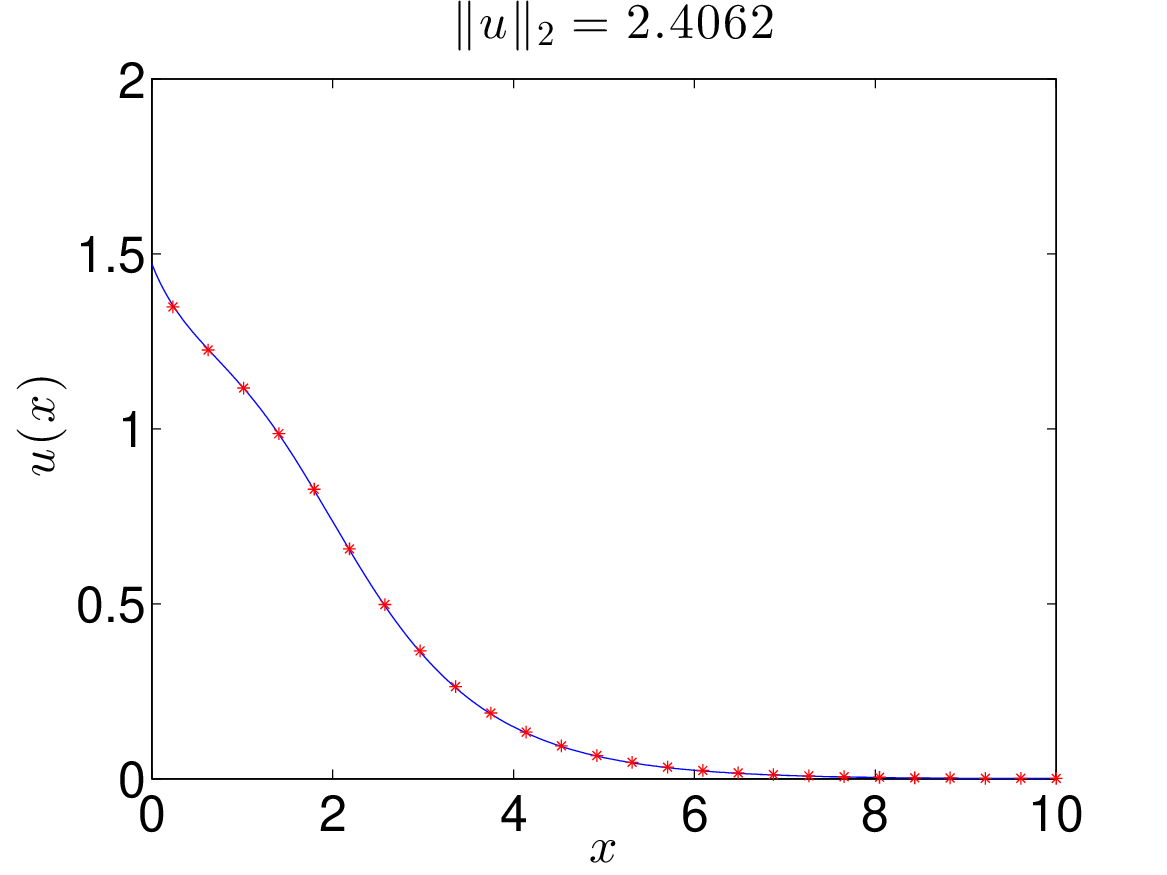} \newline
3) \includegraphics[width=60mm]{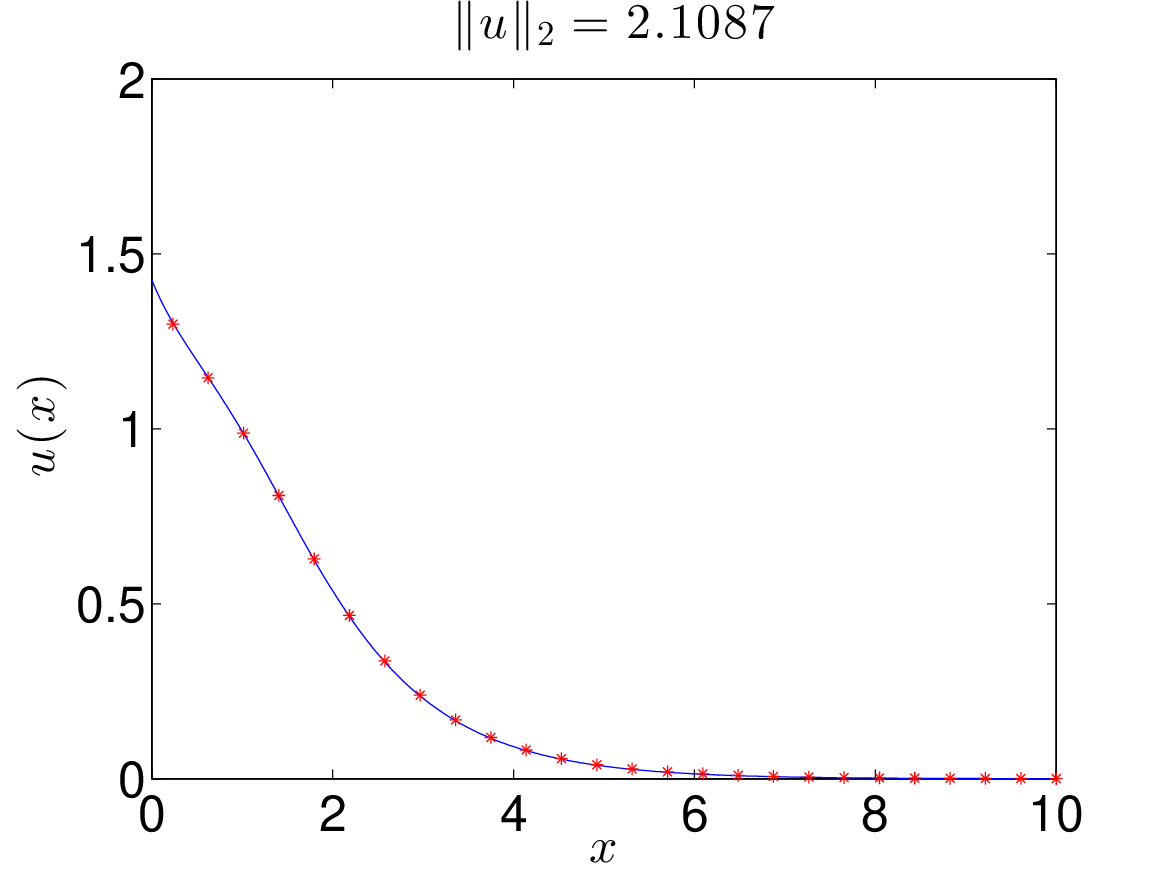} 4) %
\includegraphics[width=60mm]{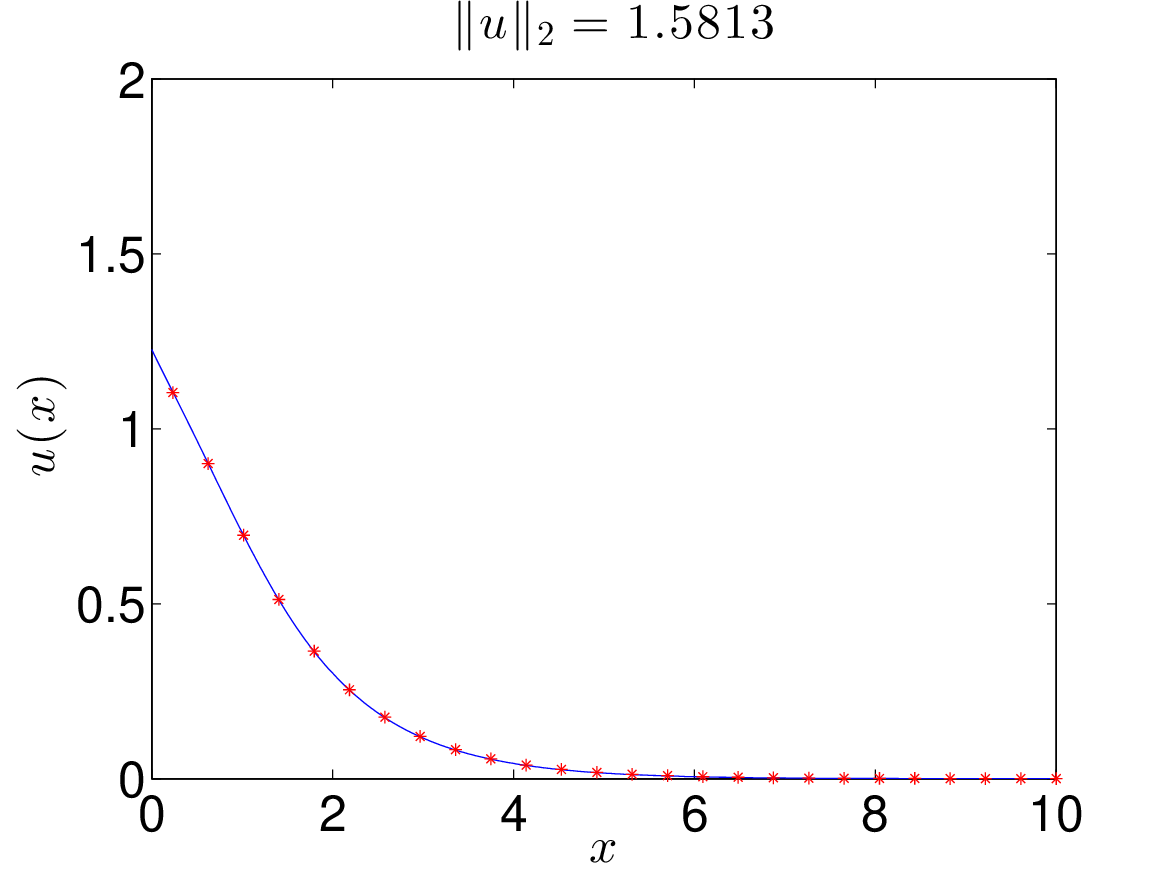} \newline
5) \includegraphics[width=60mm]{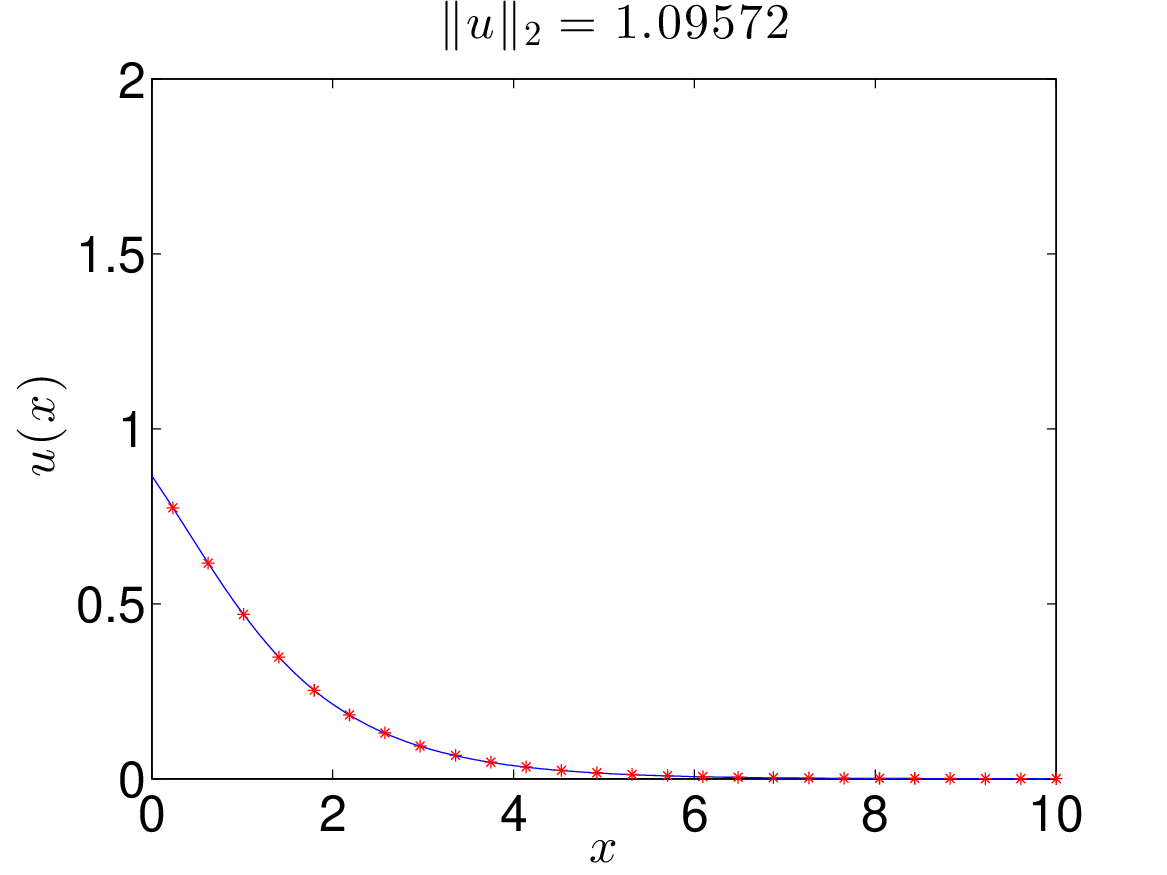} 6) %
\includegraphics[width=60mm]{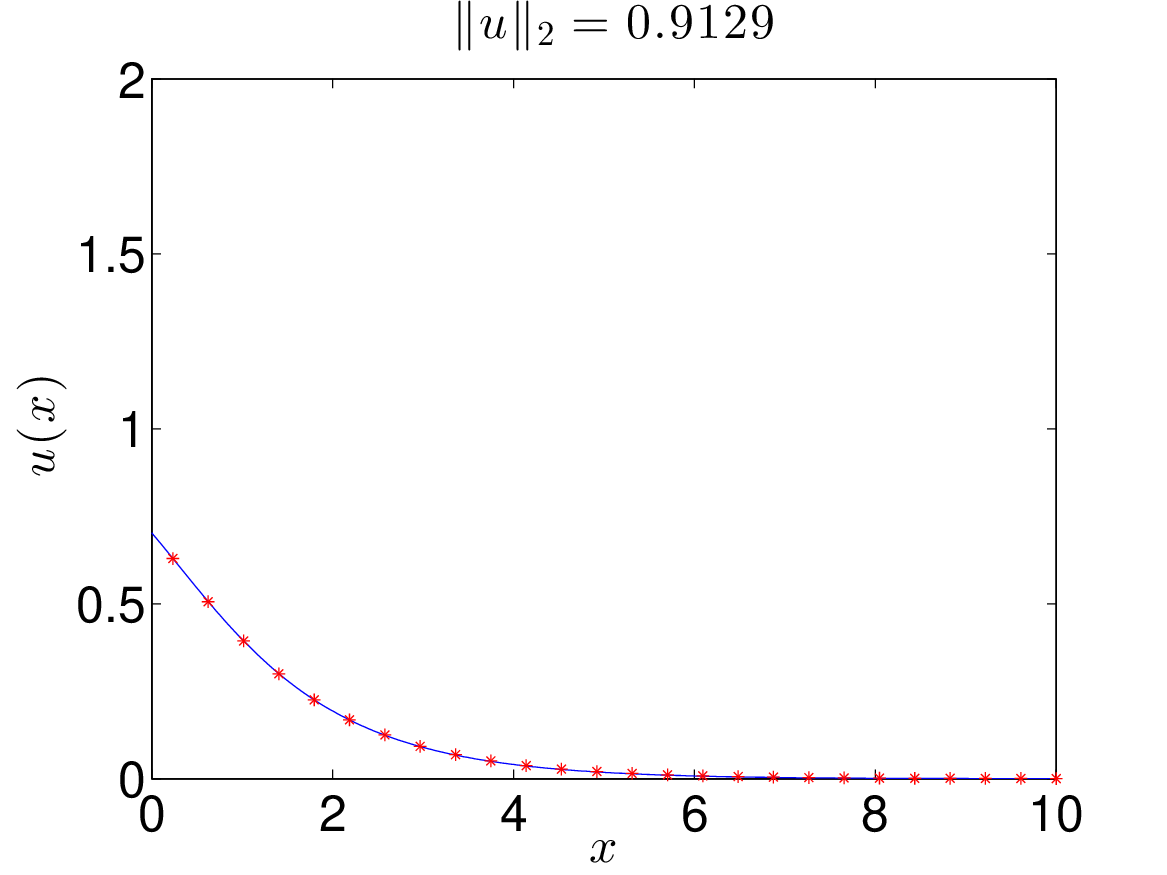} \newline
\caption{For $\protect\epsilon =0.5\protect\sqrt{3}$, in each of the plots
1)--6) we compare the discrete solution $u^{n}(x_{j})$ (\textcolor{red}{*}) to the
exact solution $u_{\pm ,k,\protect\epsilon }(x)$ (\textcolor{blue}{solid line}), 
corresponding to the points on the bifurcation curve in Fig.~\protect\ref{curve2.fig}.}
\label{fig4}
\end{figure}

\begin{figure}[bh]
1) \includegraphics[width=60mm]{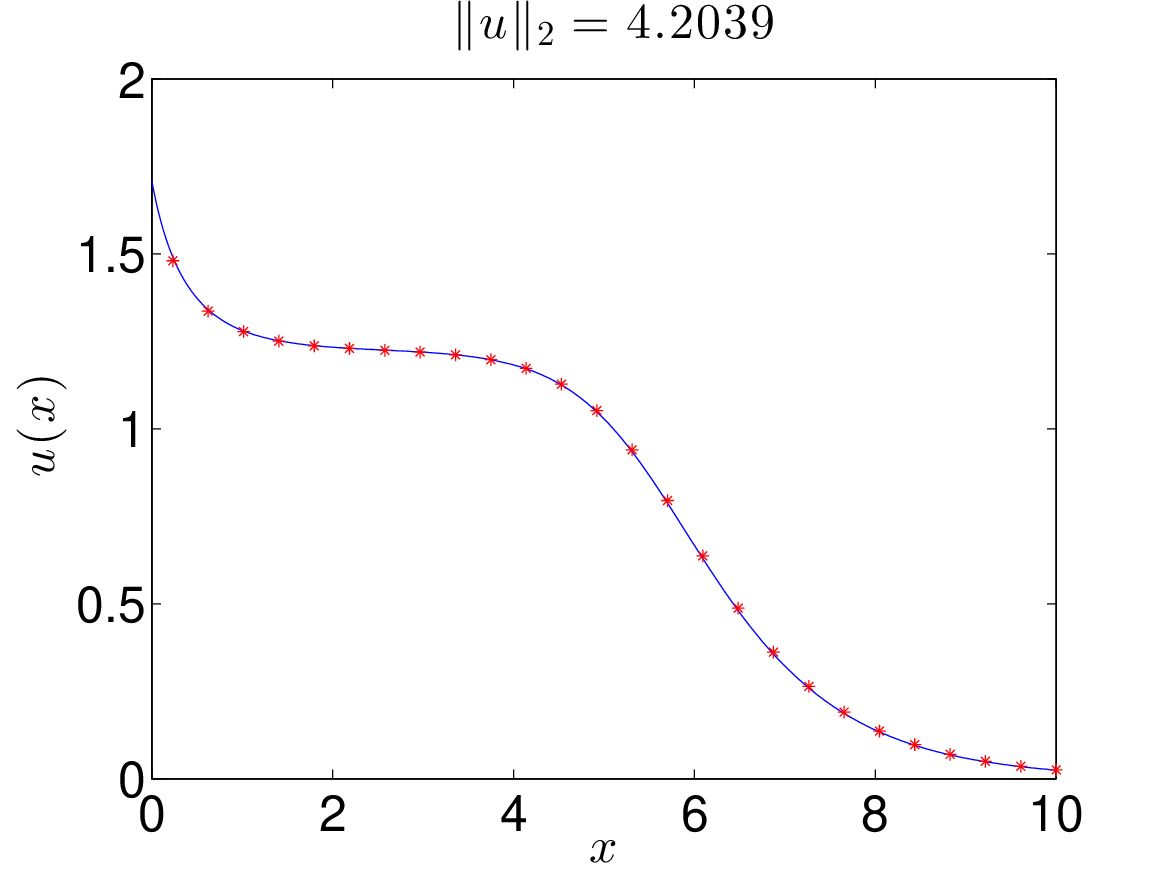} 2) %
\includegraphics[width=60mm]{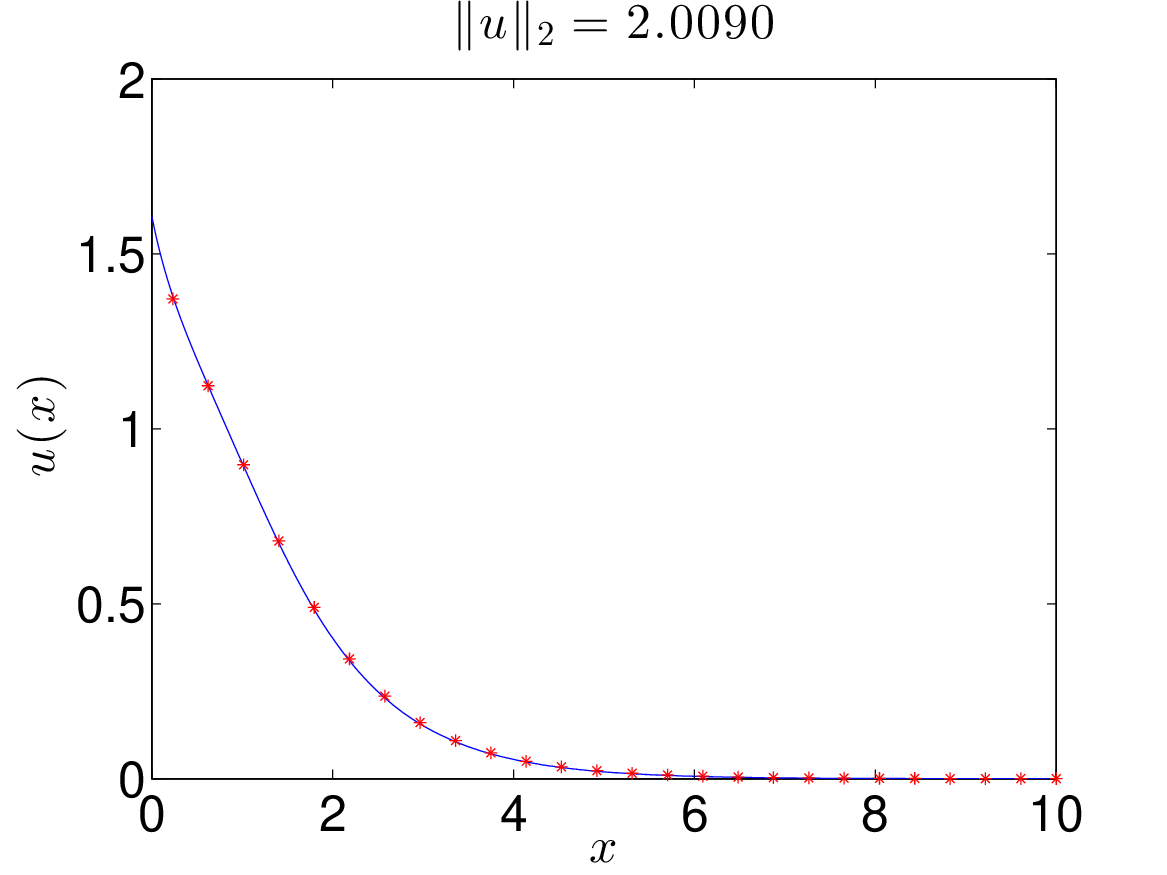} \newline
3) \includegraphics[width=60mm]{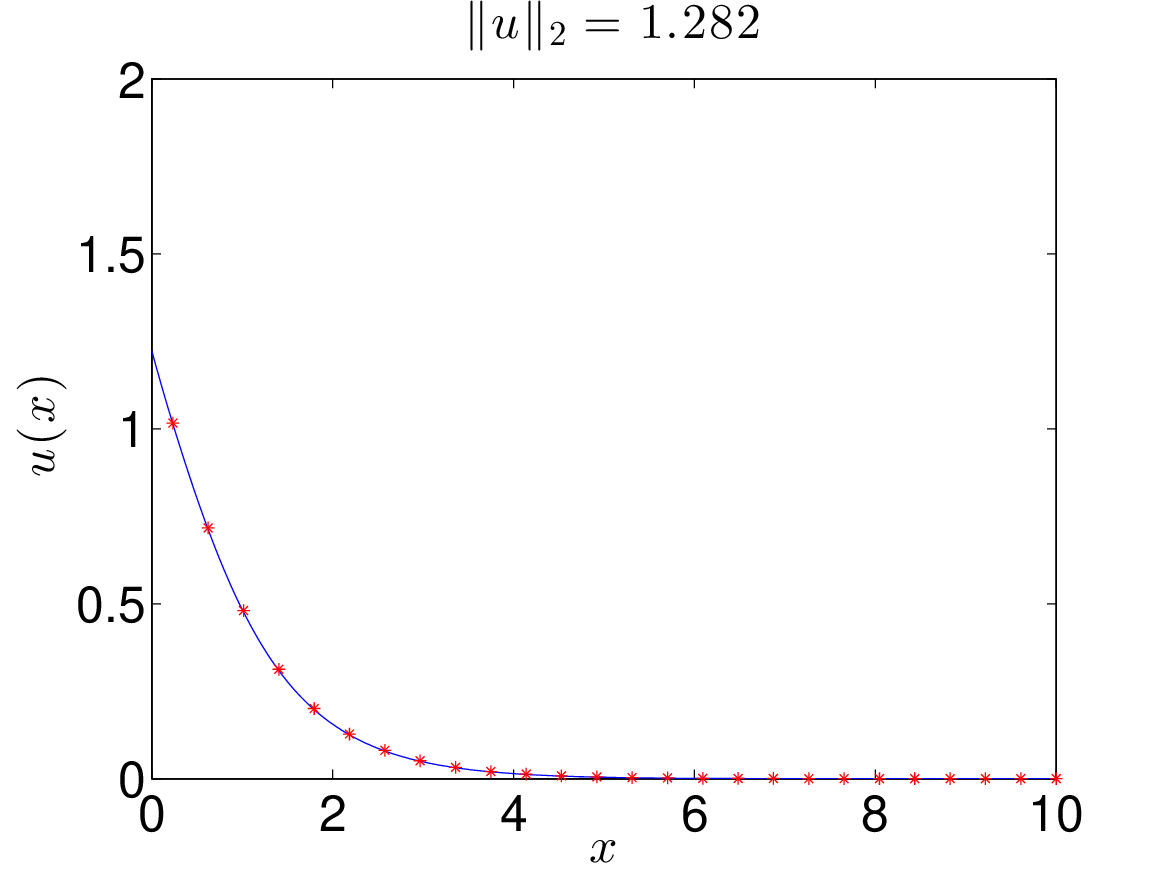} 4) %
\includegraphics[width=60mm]{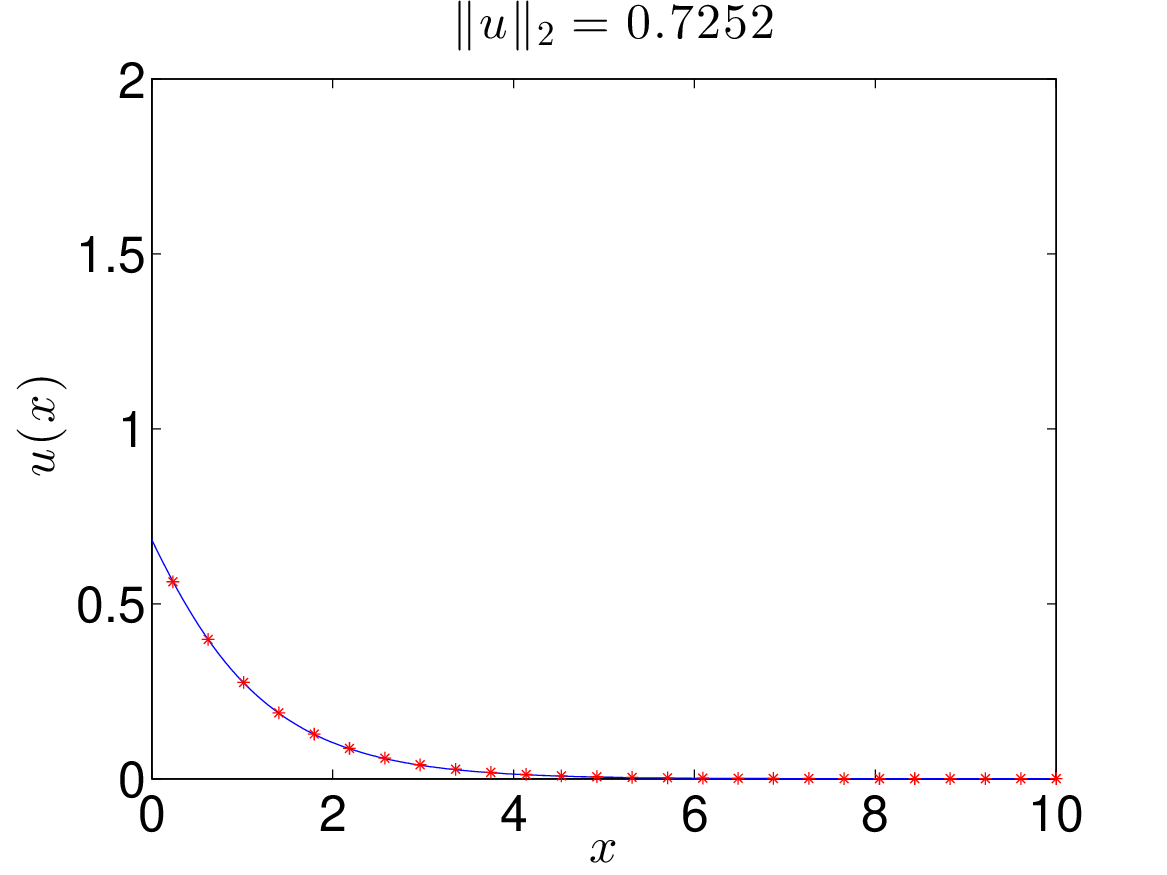} \newline
5) \includegraphics[width=60mm]{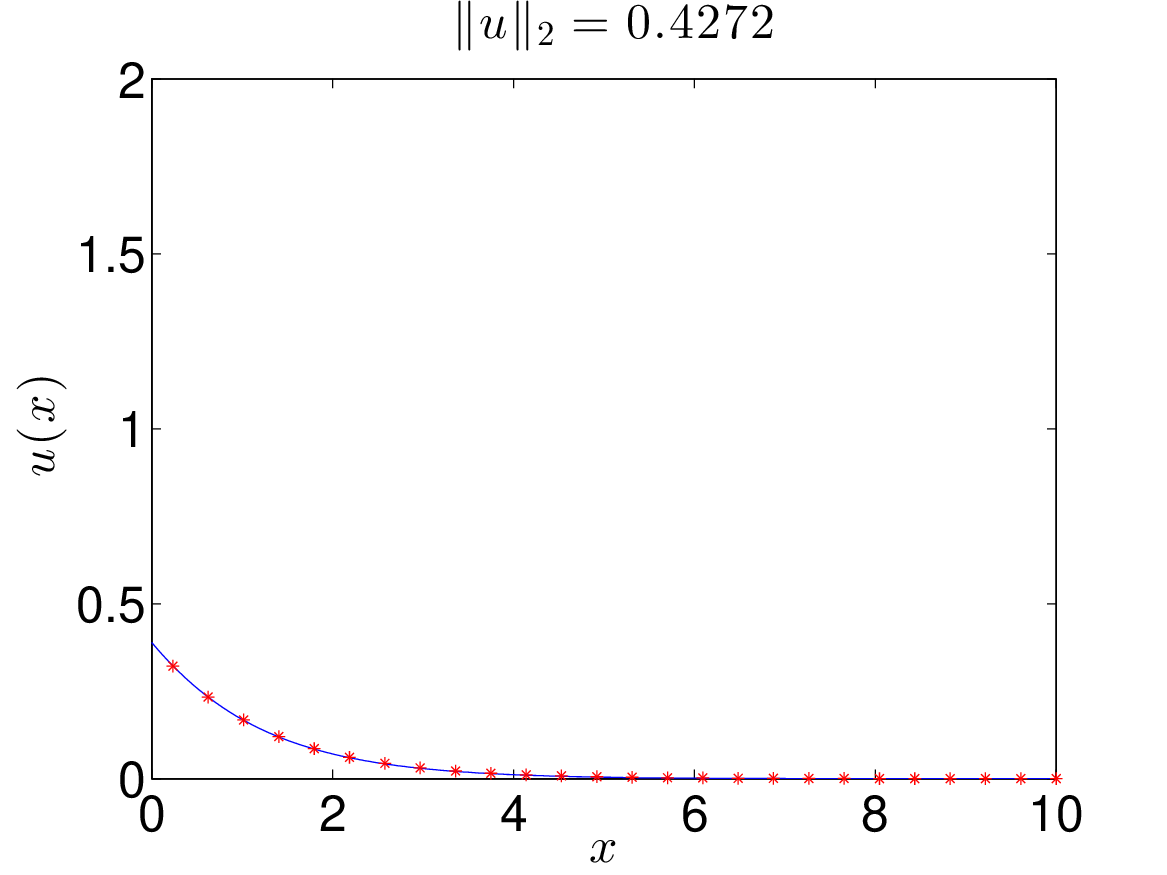} 6) %
\includegraphics[width=60mm]{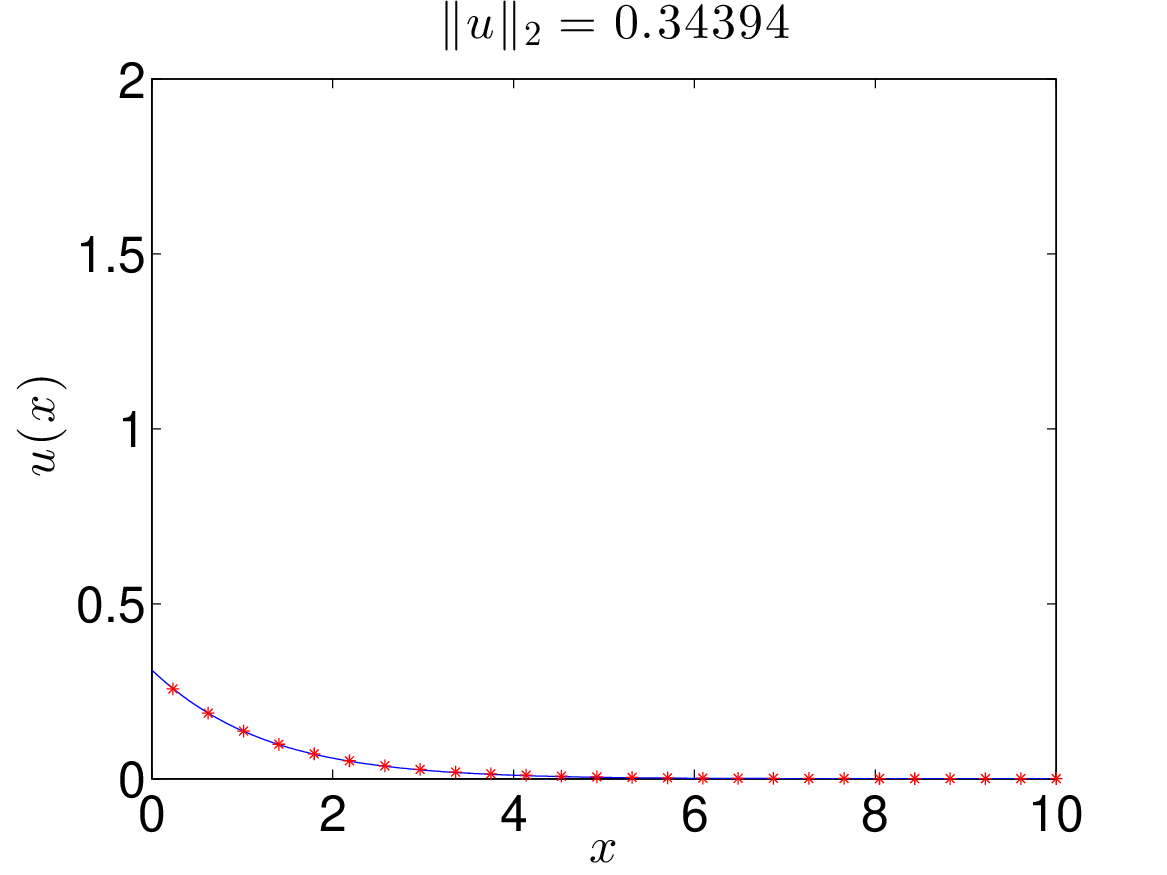} \newline
\caption{For $\protect\epsilon =0.9\protect\sqrt{3}$, in each of the plots
1)--6) we compare the discrete solution $u^{n}(x_{j})$ (\textcolor{red}{*})
to the exact solution $u_{\pm ,k,\protect\epsilon }(x)$ (\textcolor{blue}{solid line}), 
corresponding to the points on the
bifurcation curve in Fig.~\protect\ref{curve3.fig}.}
\label{fig6}
\end{figure}

In Fig.~\ref{curve1.fig}, for $\epsilon=0.1\sqrt{3}$, we pick up six
different values $a_1,\dots,a_6$ of the $L^{2}$ norm
for which we then compare, in Fig.~\ref{fig2}, the exact solutions $%
u_{\pm,k,\epsilon}$ to the numerical solutions $u^n(x_j)$ obtained by
minimization under the constraints $a_1$ to $a_6$; see plots $1)-6)$ in
Fig.~\ref{fig2}. In contrast to the analytical computation of the solution,
where we had to distinguish the two branches $u_{\pm,k,\epsilon}$ in the
range $k\in(\frac34,\overline{k}_\epsilon)$, the numerical computation is performed 
at fixed $L^2$ norm, and provides the only solution corresponding to each
given value $a=\Vert u\Vert _{L^{2}}$.

Similarly, in Fig.~\ref{fig4} and Fig.~\ref{fig6} we compare the exact
solutions with the numerical ones, for $\epsilon =0.5\sqrt{3}$ and $\epsilon
=0.9\sqrt{3}$, respectively. We again use six values of the $L^{2}$ norm
taken from the corresponding bifurcation diagrams in Fig.~\ref{curve2.fig}
and Fig.~\ref{curve3.fig}. In all three cases we notice an excellent
agreement between the exact solutions and the numerical ones. So far, the
continuous normalized gradient flow (CNGF) has been mostly used for the NLS
with a cubic nonlinearity. Our results demonstrate its effectiveness in the
case of a cubic-quintic nonlinearity. Furthermore, the CNGF being
variational in nature, this suggests that the positive solutions of %
\eqref{solequ} should admit a variational characterization --- as obtained for
instance in \cite{jf} in the case of a single power nonlinearity. This will
be discussed further elsewhere.



\clearpage

\end{document}